\newtheorem{theorem}{Theorem}[section]
\newtheorem{lemma}[theorem]{Lemma}
\newtheorem{proposition}[theorem]{Proposition}
\newtheorem{corollary}[theorem]{Corollary}
\newtheorem{remark}[theorem]{Remark}
\theoremstyle{definition}
\newtheorem{definition}[theorem]{Definition}
\theoremstyle{remark}
\newtheorem*{note*}{Note}
\numberwithin{equation}{section}
\newcommand{\rank}{\mathop{\operator@font rank}}
\newcommand{\conv}{\mathop{\operator@font conv}}
\newcommand{\vol}{\mathrm{vol}}
\newcommand{\onetagright}{\tagsleft@false}
\newcommand{\ls}{\leqslant}
\newcommand{\gr}{\geqslant}
\begin{document}
\small

\title{\bf Norms of weighted sums of log-concave random vectors}

\medskip

\author{Giorgos Chasapis, Apostolos Giannopoulos and Nikos Skarmogiannis}

\date{}

\maketitle

\begin{abstract}
\footnotesize \noindent Let $C$ and $K$ be centrally symmetric convex bodies of volume $1$ in ${\mathbb R}^n$.
We provide upper bounds for the multi-integral expression
\begin{equation*}\|{\bf t}\|_{C^s,K}=\int_{C}\cdots\int_{C}\Big\|\sum_{j=1}^st_jx_j\Big\|_K\,dx_1\cdots dx_s\end{equation*}
in the case where $C$ is isotropic. Our approach provides an alternative proof of the sharp lower bound, due to Gluskin and V.~Milman,
for this quantity. We also present some applications to ``randomized" vector balancing problems.
\end{abstract}

\section{Introduction}

Let $K$ be a centrally symmetric convex body in ${\mathbb R}^n$. For any $s$-tuple ${\cal C}=(C_1,\ldots ,C_s)$ of centrally symmetric convex bodies $C_j$
in ${\mathbb R}^n$ we consider the norm on ${\mathbb R}^s$, defined by
\begin{equation*}\|{\bf t}\|_{{\cal C},K}=\frac{1}{\prod_{j=1}^s {\rm vol}_n(C_j)}\int_{C_1}\cdots\int_{C_s}\Big\|\sum_{j=1}^st_jx_j\Big\|_K\,dx_1\cdots dx_s,\end{equation*}
where ${\bf t}=(t_1,\ldots ,t_s)$. If ${\cal C}=(C,\ldots ,C)$ then we write $\|{\bf t}\|_{C^s,K}$ instead of $\|{\bf t}\|_{{\cal C},K}$.
A question posed by V.~Milman is to determine if, in the case $C=K$, one has that $\|\cdot\|_{K^s,K}$ is equivalent to the
standard Euclidean norm up to a term which is logarithmic in the dimension, and in particular, if under some cotype condition
on the norm induced by $K$ to ${\mathbb R}^n$ one has equivalence between $\|\cdot\|_{K^s,K}$ and the Euclidean norm.

This question was studied by Bourgain, Meyer, V.~Milman and Pajor in \cite{BMMP}; they obtained the lower bound
\begin{equation*}\|{\bf t}\|_{{\cal C},K}\gr c\sqrt{s}\Big (\prod_{j=1}^s|t_j|\Big)^{1/s}\Big (\prod_{j=1}^s{\rm vol}_n(C_j)\Big)^{\frac{1}{sn}}/{\rm vol}_n(K)^{1/n},\end{equation*}
where $c>0$ is an absolute constant. Gluskin and V.~Milman studied the same question in \cite{Gluskin-VMilman-2004} and obtained
a better lower bound in a more general context.

\begin{theorem}[Gluskin-Milman]\label{th:extremal-balls-1} Let $A_1,\ldots ,A_s$ be measurable sets in ${\mathbb R}^n$ and $K$ be a star body in
${\mathbb R}^n$ with $0\in {\rm int}(K)$. Then, for all ${\bf t}=(t_1,\ldots ,t_s)\in {\mathbb R}^s$,
\begin{equation*}\|{\bf t}\|_{{\cal A},K}:=\frac{1}{\prod_{j=1}^s {\rm vol}_n(A_j)}\int_{A_1}\cdots\int_{A_s}\Big\|\sum_{j=1}^st_jx_j\Big\|_K\,dx_1\cdots dx_s \gr c\Big(\sum_{j=1}^st_j^2\Big (\frac{{\rm vol}_n(A_j)}{{\rm vol}_n(K)}\Big )^{2/n}\Big )^{1/2},\end{equation*}
where $c>0$ is an absolute constant. Equivalently, if ${\rm vol}_n(A_j)={\rm vol}_n(K)$ for all $1\ls j\ls s$ then
\begin{equation}\label{eq:GM-basic}\|{\bf t}\|_{{\cal A},K}\gr c\,\|{\bf t}\|_2\end{equation} for all ${\bf t}\in {\mathbb R}^s$.
\end{theorem}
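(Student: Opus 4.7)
The plan is to reduce the problem to the case where each $A_j$ and the body $K$ are Euclidean balls of the same volume via a rearrangement inequality, and then to treat this Euclidean reduction by computing a second moment and invoking the reverse H\"older inequality for log-concave random variables.

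For the rearrangement step, note that since $K$ is a star body with $0\in {\rm int}(K)$ the Minkowski functional satisfies $\|y\|_K=\int_0^{\infty}\mathbf{1}_{\{y\notin rK\}}\,dr$, so by Fubini
\begin{equation*}
\int_{A_1}\cdots\int_{A_s}\Big\|\sum_{j=1}^s t_jx_j\Big\|_K\,dx_1\cdots dx_s=\int_0^{\infty}\Big(\prod_{j=1}^s{\rm vol}_n(A_j)-I(r)\Big)\,dr,
\end{equation*}
where $I(r)=\int_{A_1\times\cdots\times A_s}\mathbf{1}_{\{\sum_jt_jx_j\in rK\}}\,dx$. The Brascamp--Lieb--Luttinger rearrangement inequality, applied to the $s+1$ indicators $\mathbf{1}_{A_1},\ldots,\mathbf{1}_{A_s},\mathbf{1}_{rK}$ with the linear map $(x_1,\ldots,x_s)\mapsto (x_1,\ldots,x_s,\sum_j t_jx_j)$, yields $I(r)\ls I^\ast(r)$, where $I^\ast(r)$ is the analogous integral with each $A_j$ replaced by the centered Euclidean ball $A_j^\ast=r_jB_2^n$ of the same volume and $K$ replaced by $K^\ast=\rho B_2^n$. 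Since $\prod_j{\rm vol}_n(A_j^\ast)=\prod_j{\rm vol}_n(A_j)$, subtracting and integrating in $r$ gives the reduction
\begin{equation*}
\int_{A_1\times\cdots\times A_s}\Big\|\sum_jt_jx_j\Big\|_K\,dx\gr \int_{A_1^\ast\times\cdots\times A_s^\ast}\Big\|\sum_jt_jx_j\Big\|_{K^\ast}\,dx.
\end{equation*}

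It now suffices to bound the right-hand side from below. After normalising by $\prod_j{\rm vol}_n(A_j)$, this equals $\rho^{-1}\mathbb{E}|Y|_2$, where $Y=\sum_{j=1}^s t_jX_j$ and the $X_j$ are independent, uniformly distributed on $r_jB_2^n$. A polar-coordinate computation gives
\begin{equation*}
\mathbb{E}|Y|_2^2=\frac{n}{n+2}\sum_{j=1}^s t_j^2r_j^2.
\end{equation*}
Since each $X_j$ has a log-concave density, so does $Y$ (by Pr\'ekopa--Leindler), and hence so does the scalar random variable $|Y|_2$. Berwald's reverse H\"older inequality for log-concave random variables yields $\mathbb{E}|Y|_2\gr c(\mathbb{E}|Y|_2^2)^{1/2}$ for an absolute constant $c>0$. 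Combining this with the identity $r_j/\rho=({\rm vol}_n(A_j)/{\rm vol}_n(K))^{1/n}$ produces the claimed lower bound.

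The main subtlety is the rearrangement step: the norm $\|\cdot\|_K$ need not be symmetric decreasing on $\mathbb{R}^n$ (since $K$ is only assumed star-shaped, not symmetric), so one cannot apply the rearrangement inequality to it directly. The layer-cake decomposition, together with the subtraction $\prod_j{\rm vol}_n(A_j)-I(r)$, is what allows one to invoke Brascamp--Lieb--Luttinger on the indicator $\mathbf{1}_{rK}$ (whose symmetric decreasing rearrangement is $\mathbf{1}_{rK^\ast}$, a true ball) and then reverse the direction of the inequality via the subtraction. Once the Euclidean reduction is in place, the second-moment and log-concavity steps are routine.
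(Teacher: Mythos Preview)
Your proof is correct and follows precisely the Brascamp--Lieb--Luttinger symmetrization scheme that the paper sketches as Gluskin and Milman's original argument: reduce to Euclidean balls via rearrangement, then handle the ball case by a second-moment computation and moment comparison. (Your claim that $|Y|_2$ is log-concave is justified here because $Y$, as a sum of independent spherically symmetric log-concave vectors, is itself spherically symmetric with log-concave radial profile $g$, so $|Y|_2$ has density $c_n r^{n-1}g(r)$; alternatively, the reverse H\"older step follows directly from the Khinchine-type inequality for seminorms under log-concave measures, as in \cite[Theorem~2.4.6]{BGVV}.)

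Note, however, that the paper does not itself prove Theorem~\ref{th:extremal-balls-1} in full; it only records the Gluskin--Milman approach. The paper's own contribution is the alternative proof of the special case Theorem~\ref{th:GM-short} (centrally symmetric convex $C_j$ and $K$), which avoids rearrangement entirely: it uses the identity $\|{\bf t}\|_{{\cal C},K}=\int\|x\|_K\,d\nu_{{\bf t}}$, bounds $\|g_{{\bf t}}\|_\infty\ls e^n$ via the Bobkov--Madiman entropy bound combined with the Shannon--Stam inequality, and then applies the Milman--Pajor lemma $(\int\|x\|_K^p f)^{1/p}\gr (n/(n+p))^{1/p}\|f\|_\infty^{-1/n}\vol_n(K)^{-1/n}$. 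This route gives an explicit constant $n/(e(n+1))$ but does not cover arbitrary measurable $A_j$ or star bodies $K$, whereas your BLL argument does.
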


In the statement above, when $K$ is a star body with respect to $0$ we use the notation $\|x\|_K$ for the gauge function of $K$, defined
by $\inf\{r>0: x/r\in K\}$. The proof of Theorem~\ref{th:extremal-balls-1} actually shows that one can have $c\gr
c(n)/\sqrt{2}$, where $c(n)\to 1$ as $n\to\infty $. Gluskin and V.~Milman use a symmetrization type result which is a consequence of the
Brascamp-Lieb-Luttinger inequality: under the assumptions of Theorem~\ref{th:extremal-balls-1} and the additional assumption that
${\rm vol}_n(A_j)={\rm vol}_n(K)={\rm vol}_n(B_2^n)$ for all $1\ls j\ls s$, one has
\begin{align*}
& {\rm vol}_{ns}\Big ( \Big\{(x_j)_{1\ls j\ls s}: x_j\in A_j\;\;\hbox{for
all}\;j\;\;\hbox{and}\;\Big\|\sum_{j=1}^st_jx_j\Big\|_K<\alpha \Big\}\Big)\\
& \hspace*{2cm}\ls {\rm vol}_{ns}\Big (\Big\{ (x_j)_{1\ls j\ls s}: x_j\in
B_2^n\;\;\hbox{for
all}\;j\;\;\hbox{and}\;\Big\|\sum_{j=1}^st_jx_j\Big\|_2<\alpha \Big\}\Big)
\end{align*}for any ${\bf t}=(t_1,\ldots ,t_s)\in {\mathbb R}^s$ and
any $\alpha >0$.

Our starting point is a simple but useful identity; one has
\begin{equation}\label{eq:first-identity}\|{\bf t}\|_{{\cal C},K}=\|{\bf t}\|_2\,\int_{{\mathbb R}^n}\|x\|_K\,d\nu_{{\bf t}}(x),\end{equation}
where $\nu_{{\bf t}}$ is the distribution of the random vector $\frac{1}{\|{\bf t}\|_2}(t_1X_1+\cdots +t_sX_s)$ and $X_j$ are
independent random vectors uniformly distributed on $C_j$. Starting with \eqref{eq:first-identity} we can actually give an
alternative short proof of Theorem~\ref{th:extremal-balls-1} in the case that we study.

\begin{theorem}\label{th:GM-short}
Let ${\cal C}=(C_1,\ldots ,C_s)$ be an $s$-tuple of centrally symmetric convex bodies and $K$ be a centrally symmetric convex body in $\mathbb{R}^n$
with ${\rm vol}_n(C_j)={\rm vol}_n(K)=1$. Then, for any ${\bf t}=(t_1,\ldots,t_s)\in \mathbb{R}^s$,
\begin{equation*}\|{\bf t}\|_{{\cal C},K}\gr \frac{n}{e(n+1)}\|{\bf t}\|_2.\end{equation*}
\end{theorem}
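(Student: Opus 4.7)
The plan is to combine the identity~\eqref{eq:first-identity} with a bathtub-type lower bound and an entropy estimate for log-concave densities. Let $X_1,\ldots,X_s$ be independent with $X_j$ uniformly distributed on $C_j$, and let $h$ be the (symmetric, log-concave) density on $\mathbb{R}^n$ of the random vector $Z:=\|{\bf t}\|_2^{-1}\sum_j t_j X_j$, so that $\nu_{\bf t}$ from~\eqref{eq:first-identity} has density $h$. By~\eqref{eq:first-identity} it suffices to show that $\int_{\mathbb{R}^n}\|x\|_K\,d\nu_{\bf t}(x) \geq n/(e(n+1))$.

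The first step I would perform is a layer-cake estimate. Using only $\vol_n(K)=1$ and $h \leq \|h\|_\infty$ pointwise, one has $\Pr[Z \in rK] \leq \|h\|_\infty r^n$ for every $r>0$, hence
\begin{equation*}
\int_{\mathbb{R}^n}\|x\|_K\,d\nu_{\bf t}(x)=\int_0^\infty(1-\Pr[Z\in rK])\,dr\geq\int_0^{\|h\|_\infty^{-1/n}}(1-\|h\|_\infty r^n)\,dr=\frac{n}{n+1}\,\|h\|_\infty^{-1/n}.
\end{equation*}
It therefore remains to prove $\|h\|_\infty\leq e^n$, and this is where the structure of $Z$ enters.

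For the bound on $\|h\|_\infty$ I would invoke two standard facts about log-concave random vectors. First, for any log-concave probability density $f$ on $\mathbb{R}^n$,
\begin{equation*}
\log\|f\|_\infty+H(f)\leq n,\qquad H(f):=-\int f\log f,
\end{equation*}
a classical inequality with equality for exponential-type densities; it follows from a Brunn-Minkowski argument on the convex sublevel sets of $-\log f$, whose volumes to the power $1/n$ are concave in $t$ and vanish at $t=0$, followed by a Chebyshev-type comparison of the resulting reweighted measure with the gamma density $u^n e^{-u}\,du$. Second, the entropy power inequality applied to $Z=\sum_j(t_j/\|{\bf t}\|_2)X_j$ gives $e^{2H(Z)/n}\geq \sum_j(t_j/\|{\bf t}\|_2)^2\,e^{2H(X_j)/n}$, and since $X_j$ is uniform on a body of volume $1$ we have $H(X_j)=\log\vol_n(C_j)=0$; the right-hand side is therefore equal to $1$ and $H(Z)\geq 0$. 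Combining these, $\log\|h\|_\infty\leq n-H(Z)\leq n$, i.e., $\|h\|_\infty\leq e^n$; together with the layer-cake bound this yields $\int\|x\|_K\,d\nu_{\bf t}(x)\geq n/(e(n+1))$, and multiplying by $\|{\bf t}\|_2$ finishes the proof. The only non-routine ingredient here is the log-concave entropy bound; if it is not quoted from the literature, the short self-contained proof sketched above suffices, since the bathtub estimate and EPI are completely standard.
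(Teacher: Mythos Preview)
Your proof is correct and essentially identical to the paper's: both arguments combine the identity~\eqref{eq:first-identity} with (i) the Bobkov--Madiman bound $\log\|h\|_\infty + H(h) \leq n$ for log-concave densities, (ii) the Shannon--Stam/entropy power inequality to deduce $H(Z)\geq 0$ and hence $\|h\|_\infty\leq e^n$, and (iii) a bathtub-type lower bound, which you prove directly via layer-cake and the paper quotes as the $p=1$ case of a lemma of Bourgain--Meyer--Milman--Pajor.
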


We are mainly interested in upper bounds for the quantity $\|{\bf t}\|_{C^s,K}$. Since $\|{\bf t}\|_{C^s,K}=\|{\bf t}\|_{(TC)^s,TK}$
for any $T\in SL(n)$, we may restrict our attention to the case where $C$ is isotropic
(see Section~\ref{sec:background} for the definition and background information). In this case
\begin{equation}\label{eq:basic-identity}\|{\bf t}\|_{C^s,K}=\|{\bf t}\|_2L_C\, I_1(\mu_{{\bf t}},K),\end{equation}
where $\mu_{{\bf t}}$ is an isotropic, compactly supported log-concave probability measure depending on ${\bf t}$ and,
for any centered log-concave probability measure $\mu $ on ${\mathbb R}^n$,
\begin{equation*}I_1(\mu ,K)=\int_{{\mathbb R}^n}\|x\|_Kd\mu (x).\end{equation*}
In order to get a feeling of what one would expect, let us note that if $\mu $ is an isotropic log-concave probability measure
on ${\mathbb R}^n$ and $K$ is a centrally symmetric convex body of volume $1$ in ${\mathbb R}^n$ then
\begin{equation*}\int_{O(n)}I_1(\mu ,U(K))\,d\nu (U) =\int_{{\mathbb R}^n}\int_{O(n)}\|x\|_{U(K)}d\nu (U)\,d\mu (x)=
M(K)\int_{{\mathbb R}^n}\|x\|_2d\mu (x)\approx \sqrt{n}M(K),\end{equation*}
where
\begin{equation*}M(K):=\int_{S^{n-1}}\|\xi\|_Kd\sigma (\xi)\end{equation*}
and $\nu ,\sigma $ denote the Haar probability measures on $O(n)$ and $S^{n-1}$ respectively. It follows that
\begin{equation}\label{eq:intro-1}\int_{O(n)}\|{\bf t}\|_{U(C)^s,K}\approx (L_C\,\sqrt{n}M(K))\,\|{\bf t}\|_2.\end{equation}
Therefore, our goal is to obtain a constant of the order of $L_C\,\sqrt{n}M(K)$ in our upper estimate for $\|{\bf t}\|_{C^s,K}$.
Let us note here that the question to estimate the parameter $M(K)$ for an isotropic centrally symmetric convex body $K$ in ${\mathbb R}^n$,
which will appear frequently in our upper bounds, remains open; one may hope that
$L_K\,\sqrt{n}M(K)\ls c(\log n)^b$ for some absolute constant $b>0$. However, the currently best known estimate is
\begin{equation*}M(K)\ls \frac{c\log^{2/5}(e+n)}{\sqrt[10]{n}L_K}.\end{equation*}
This is proved in \cite{Giannopoulos-EMilman-2014} (see  also \cite{GSTV} for previous work on this question) and it is also shown
that in the case where $K$ is a $\psi_2$-body with constant $\varrho $ one has
\begin{equation*}M(K)\ls \frac{c\sqrt[3]{\varrho }\log^{1/3}(e+n)}{\sqrt[6]{n}L_K}.\end{equation*}
We pass now to our bounds for $\|{\bf t}\|_{C^s,K}$. Some straightforward upper and lower estimates are given in the next theorem.

\begin{theorem}\label{th:first}
Let $C$ be an isotropic convex body in $\mathbb{R}^n$ and $K$ be a centrally symmetric convex body in $\mathbb{R}^n$. Then, for any $s\gr 1$
and ${\bf t}=(t_1,\ldots,t_s)\in \mathbb{R}^s$,
\begin{equation*}c_1L_CR(K^{\circ })\, \|{\bf t}\|_2\ls \|{\bf t}\|_{C^s,K}\ls \sqrt{n}L_CR(K^{\circ })\, \|{\bf t}\|_2,\end{equation*}
where $c_1>0$ is an absolute constant and $R(K^{\circ })$ is the radius of $K^{\circ }$.
\end{theorem}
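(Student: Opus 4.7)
The plan is to start from identity \eqref{eq:basic-identity}, which reduces the problem to showing
\begin{equation*}
c_1 R(K^{\circ}) \ls I_1(\mu_{{\bf t}},K) \ls \sqrt{n}\,R(K^{\circ}),
\end{equation*}
since we can then multiply through by $\|{\bf t}\|_2 L_C$. Here $\mu_{{\bf t}}$ is the isotropic log-concave probability measure appearing in \eqref{eq:basic-identity}, so in particular $\int \langle x,y\rangle^2\,d\mu_{{\bf t}}(x)=\|y\|_2^2$ for every $y\in\R^n$ and $\int\|x\|_2^2\,d\mu_{{\bf t}}(x)=n$.

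For the upper bound, the first step is to recall that $\|x\|_K=\sup_{y\in K^{\circ}}\langle x,y\rangle$, so by Cauchy--Schwarz
\begin{equation*}
\|x\|_K \ls R(K^{\circ})\,\|x\|_2
\end{equation*}
pointwise. Integrating against $\mu_{{\bf t}}$ and using Cauchy--Schwarz once more together with isotropy gives
\begin{equation*}
I_1(\mu_{{\bf t}},K)\ls R(K^{\circ})\int\|x\|_2\,d\mu_{{\bf t}}(x)\ls R(K^{\circ})\Big(\int\|x\|_2^2\,d\mu_{{\bf t}}(x)\Big)^{1/2}=\sqrt{n}\,R(K^{\circ}).
\end{equation*}

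For the lower bound, pick a point $y_0\in K^{\circ}$ realizing $\|y_0\|_2=R(K^{\circ})$. Since $K$ is centrally symmetric, $\|x\|_K\gr |\langle x,y_0\rangle|$ for every $x\in\R^n$, so
\begin{equation*}
I_1(\mu_{{\bf t}},K)\gr \int_{\R^n}|\langle x,y_0\rangle|\,d\mu_{{\bf t}}(x).
\end{equation*}
The marginal $x\mapsto \langle x,y_0\rangle$ under $\mu_{{\bf t}}$ is a one-dimensional log-concave random variable, so by the standard reverse H\"older inequality (Borell's lemma, or equivalently Berwald's inequality) its $L^2$ and $L^1$ norms are comparable up to an absolute constant. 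Combined with isotropy, which says the $L^2$ norm equals $\|y_0\|_2$, this yields
\begin{equation*}
\int|\langle x,y_0\rangle|\,d\mu_{{\bf t}}(x)\gr c\,\|y_0\|_2 = c\,R(K^{\circ}),
\end{equation*}
and multiplying by $\|{\bf t}\|_2 L_C$ closes the argument.

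There is no serious obstacle here; both inequalities are one-line consequences of Cauchy--Schwarz applied to the right test functional from $K^{\circ}$, plus the two defining properties of an isotropic log-concave measure (second moments and equivalence of $L^p$ norms for one-dimensional marginals). The only point one must be slightly careful about is that the lower bound uses only the direction of largest Euclidean norm in $K^{\circ}$, whereas the upper bound uses the supremum over all of $K^{\circ}$; this is exactly the source of the gap $\sqrt{n}$ between the two estimates and indicates where a finer analysis (e.g.\ replacing $R(K^{\circ})$ by $M(K)$ as suggested in the discussion around \eqref{eq:intro-1}) would be required for sharper results.
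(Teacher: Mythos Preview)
Your proof is correct and follows essentially the same route as the paper's own argument (given there as Theorem~\ref{theorem:simple-ell2-bound}): for the upper bound both use $\|x\|_K\ls R(K^{\circ})\|x\|_2$ together with Cauchy--Schwarz and isotropy, and for the lower bound both pick the extremal direction in $K^{\circ}$ and invoke the reverse H\"older inequality for log-concave marginals (the paper cites \cite[Theorem~2.4.6]{BGVV}, you name it as Borell/Berwald). The only cosmetic difference is that the paper writes the lower bound as a $\max$ over $K^{\circ}$ before specializing, whereas you fix the maximizing point $y_0$ from the outset.
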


A class of centrally symmetric convex bodies for which the upper bound of Theorem~\ref{th:first} can be applied is the class of $2$-convex
bodies. More precisely, in Section~4.1 we see that if $K$ is an isotropic convex body in ${\mathbb R}^n$, which is also $2$-convex
with constant $\alpha$, then
\begin{equation*}\|{\bf t}\|_{C^s,K} \ls (c_2L_C/\sqrt{\alpha })\, \|{\bf t}\|_2\end{equation*}
for any isotropic centrally symmetric convex body $C$ and any ${\bf t}=(t_1,\ldots,t_s)\in \mathbb{R}^s$, where $c_2>0$ is an absolute constant.
In particular, for any centrally symmetric convex body $K$ in ${\mathbb R}^n$ which is $2$-convex with constant $\alpha $  we have
\begin{equation*}\|{\bf t}\|_{K^s,K}  \ls (c_3/\alpha )\, \|{\bf t}\|_2\end{equation*}
for all ${\bf t}=(t_1,\ldots,t_s)\in \mathbb{R}^s$, where $c_3>0$ is an absolute constant.

\smallskip

Starting again with \eqref{eq:basic-identity} and using an argument which goes back to Bourgain (also, employing Paouris' inequality
and Talagrand's comparison theorem) in Section~4.2 we obtain a general upper bound of different type.

\begin{theorem}\label{th:general}
Let $C$ be an isotropic convex body in $\mathbb{R}^n$ and $K$ be a centrally symmetric convex body in $\mathbb{R}^n$. Then,
\begin{equation*}
\|{\bf t}\|_{C^s,K}\ls c\,\Big (L_C\max\Big\{ \sqrt[4]{n},\sqrt{\log (1+s)}\Big\}\Big )\,\sqrt{n}M(K)\|{\bf t}\|_2
\end{equation*}
for every ${\bf t}=(t_1,\ldots,t_s)\in \mathbb{R}^s$, where $c>0$ is an absolute constant.
\end{theorem}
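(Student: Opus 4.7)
The plan is to start from the identity \eqref{eq:basic-identity}, which reduces the theorem to an inequality of the form
\[ I_1(\mu_{\mathbf t},K)\ls c\sqrt n\, M(K)\, r_{n,s}, \qquad r_{n,s}:=\max\bigl\{n^{1/4},\sqrt{\log(1+s)}\bigr\}, \]
where $\mu_{\mathbf t}$ is the isotropic, log-concave law of $Y:=(L_C\|\mathbf t\|_2)^{-1}\sum_{j=1}^s t_j X_j$ and $X_1,\ldots,X_s$ are i.i.d.\ random vectors uniformly distributed on $C$. My strategy is a Bourgain-type truncation: fix $R\asymp \sqrt n\, r_{n,s}$ and split
\[ I_1(\mu_{\mathbf t},K)=\mathbb{E}\bigl[\|Y\|_K\,\mathbf 1_{\{\|Y\|_2\ls R\}}\bigr]+\mathbb{E}\bigl[\|Y\|_K\,\mathbf 1_{\{\|Y\|_2>R\}}\bigr]. \]

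For the tail, I would combine the crude inequality $\|x\|_K\ls R(K^\circ)\|x\|_2$ with Paouris' deviation estimate $\mu_{\mathbf t}(\{\|x\|_2\gr C\sqrt n\, u\})\ls e^{-\sqrt n\, u}$, valid for $u\gr 1$. A dyadic decomposition then bounds the tail contribution by something of the form $R(K^\circ)\cdot R\cdot e^{-c\sqrt n\, u_0}$ with $R\asymp \sqrt n\, u_0$, and the choice $u_0\asymp r_{n,s}$ makes the exponential factor swallow $R(K^\circ)$ (which is itself controlled in terms of $\sqrt n\, M(K)$ up to polynomial factors), leaving a contribution much smaller than the target $\sqrt n\, M(K)\, r_{n,s}$.

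For the bulk I would dualize $\|x\|_K=\sup_{y\in K^\circ}\langle x,y\rangle$ and apply Talagrand's comparison (generic-chaining) theorem to the centered process $y\mapsto \langle Y,y\rangle$ on $K^\circ$. Since $Y$ is a normalized sum of $s$ independent isotropic log-concave vectors, each marginal $\langle X_j,y-y'\rangle$ has $\psi_1$-norm bounded by a constant multiple of $\|y-y'\|_2$, so Bernstein's inequality applied to $\langle Y,y-y'\rangle=(L_C\|\mathbf t\|_2)^{-1}\sum_{j=1}^s t_j\langle X_j,y-y'\rangle$ yields the two-level tail
\[ \mathbb{P}\bigl(|\langle Y,y-y'\rangle|\gr u\|y-y'\|_2\bigr)\ls 2\exp\!\bigl(-c\min\{u^2,\,u\sqrt s\,\|\mathbf t\|_2/\|\mathbf t\|_\infty\}\bigr). \]
Chaining with this mixed $\psi_2$-$\psi_1$ increment over $K^\circ$ compares $\mathbb{E}\sup_{y\in K^\circ}\langle Y,y\rangle$ with the Gaussian baseline $\mathbb{E}\sup_{y\in K^\circ}\langle G,y\rangle\asymp \sqrt n\, M(K)$, at a chaining loss of order $r_{n,s}\,\sqrt n\, M(K)$.

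The main obstacle is calibrating the truncation radius $R$ against the chaining bound and identifying the precise source of $r_{n,s}$. The branch $\sqrt{\log(1+s)}$ emerges as the logarithm of the covering number of $K^\circ$ at the Bernstein crossover scale $(\|\mathbf t\|_2/\|\mathbf t\|_\infty)\sqrt s$ and governs the ``short-sum'' regime, while the branch $n^{1/4}$ reflects Paouris' $\sqrt n$ large-deviation scale interacting with the Euclidean diameter of $K^\circ$ and governs the ``long-sum'' regime $s\gtrsim n$. Handling both regimes simultaneously, and in particular avoiding extra logarithmic factors when patching the chaining estimate with Paouris across the crossover, is where the real technical work lies.
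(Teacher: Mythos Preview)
Your overall architecture --- reduce via \eqref{eq:basic-identity} to bounding $I_1(\mu_{\mathbf t},K)$, truncate at a radius $R\asymp r\sqrt n$, handle the tail by Paouris, and handle the bulk by Talagrand's comparison against the Gaussian process $h_y=\langle G,y\rangle$ on $K^\circ$ --- is exactly that of the paper. The gap is in the mechanism you propose for obtaining the $\psi_2$ increment bound that Talagrand requires. You invoke Bernstein's inequality for the sum $\langle Y,y-y'\rangle=(L_C\|\mathbf t\|_2)^{-1}\sum_j t_j\langle X_j,y-y'\rangle$. But the Bernstein crossover is at $u\sim\|\mathbf t\|_2/\|\mathbf t\|_\infty$ (the $\sqrt s$ in your displayed tail is spurious), so the quality of this bound depends entirely on $\mathbf t$: when $\mathbf t=e_1$ there is a single summand, the process is purely $\psi_1$, and Bernstein is vacuous --- yet the theorem still demands the $n^{1/4}$ bound in that case. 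Mixed $\psi_2$--$\psi_1$ chaining therefore cannot deliver the stated result uniformly in $\mathbf t$, and your explanation of $n^{1/4}$ and $\sqrt{\log(1+s)}$ in terms of Bernstein crossover scales and covering numbers of $K^\circ$ is off the mark.

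The paper's device is the classical Bourgain trick applied to the \emph{truncated} measure itself. Restricting $\mu_{\mathbf t}$ to $C_{\mathbf t}(r)=\{\|x\|_2\le c_1 r\sqrt n\}$ and normalizing gives a log-concave probability measure $\mu_{\mathbf t,r}$ on which the linear process $w_y(x)=\langle x,y\rangle$ satisfies $\|w_y-w_z\|_{L^\infty}\le c_1 r\sqrt n\,\|y-z\|_2$ and (by log-concavity) $\|w_y-w_z\|_{\psi_1}\lesssim\|y-z\|_2$. The elementary inequality $\|h\|_{\psi_2}\le\sqrt{\|h\|_{\psi_1}\|h\|_\infty}$ then yields $\|w_y-w_z\|_{\psi_2}\lesssim\sqrt r\,n^{1/4}\|y-z\|_2$, so Talagrand gives bulk $\lesssim\sqrt r\,n^{1/4}\sqrt n\,M(K)$. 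This is where $n^{1/4}$ actually comes from. The factor $\sqrt{\log(1+s)}$ does not come from chaining at all: since $\mathrm{supp}(\mu_{\mathbf t})\subset(cns)B_2^n$, the tail term is $\lesssim b(K)\,ns\,e^{-r\sqrt n}$; using $b(K)\lesssim\sqrt n\,M(K)$ and balancing against the bulk forces $r\approx\max\{1,\log(1+s)/\sqrt n\}$, whence $\sqrt r\,n^{1/4}=\max\{n^{1/4},\sqrt{\log(1+s)}\}$.
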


In the case where $C$ is a $\psi_2$-body with constant $\varrho $, a direct application of Talagrand's theorem leads to a stronger estimate:
If $C$ is an isotropic $\psi_2$-body with constant $\varrho $ and $K$ is a centrally symmetric convex body in $\mathbb{R}^n$ then
\begin{equation*}\|{\bf t}\|_{C^s,K}\ls c\varrho^2\sqrt{n}M(K)\,\|{\bf t}\|_2\end{equation*}
for every ${\bf t}=(t_1,\ldots,t_s)\in \mathbb{R}^s$, where $c>0$ is an absolute constant.

\medskip

Next, combining \eqref{eq:basic-identity} with results of E.~Milman from \cite{EMilman-2006}, we obtain some rather
strong estimates in the case where $K$ has bounded cotype-$2$ constant (see Section~5). In the
case $C=K$ we get:

\begin{theorem}\label{th:intro-type-cotype}Let $K$ be a centrally symmetric convex body in ${\mathbb R}^n$. For any $s\gr 1$ and
any ${\bf t}=(t_1,\ldots ,t_s)\in {\mathbb R}^s$ we have that
\begin{equation*}
\frac{c_3}{C_2(X_K)}\, \|{\bf t}\|_2\ls \|{\bf t}\|_{K^s,K} \ls \big (c_4L_KC_2(X_K)\sqrt{n}M(K_{{\rm iso}})\big)\,\|{\bf t}\|_2,
\end{equation*}
where $C_2(X_K)$ is the cotype-$2$ constant of the normed space $X_K$ with unit ball $K$, and $K_{{\rm iso}}$ is an isotropic image of $K$.
\end{theorem}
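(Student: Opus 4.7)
The lower bound is essentially free from what has been proved already. Rescaling $K$ so that $\operatorname{vol}_n(K)=1$ leaves $\|{\bf t}\|_{K^s,K}$ unchanged, and Theorem~\ref{th:GM-short} (applied with $C_j=K$ for every $j$) then gives $\|{\bf t}\|_{K^s,K}\gr \tfrac{n}{e(n+1)}\|{\bf t}\|_2$, which is at least $c\|{\bf t}\|_2$ for an absolute $c>0$. Since $C_2(X_K)\gr 1$ for every normed space, this immediately produces the announced estimate $\|{\bf t}\|_{K^s,K}\gr (c_3/C_2(X_K))\|{\bf t}\|_2$.

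For the upper bound I would begin by using the affine invariance $\|{\bf t}\|_{K^s,K}=\|{\bf t}\|_{(TK)^s,TK}$ for every $T\in SL(n)$, together with scale-invariance in $K$, to reduce to the case $K=K_{\rm iso}$ with $\operatorname{vol}_n(K)=1$. Then $K$ is itself isotropic, so identity \eqref{eq:basic-identity} applied with $C=K$ yields
\begin{equation*}
\|{\bf t}\|_{K^s,K}=L_K\,\|{\bf t}\|_2\cdot I_1(\mu_{\bf t},K_{\rm iso}),
\end{equation*}
where $\mu_{\bf t}$ is an isotropic, compactly supported, log-concave probability measure on $\mathbb{R}^n$. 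The whole problem is thereby reduced to a single-body estimate for $I_1(\mu_{\bf t},K_{\rm iso})$.

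As recalled in the introduction, averaging $U\mapsto I_1(\mu_{\bf t},UK_{\rm iso})$ over $O(n)$ equals $M(K_{\rm iso})\int\|x\|_2\,d\mu_{\bf t}(x)\approx \sqrt{n}\,M(K_{\rm iso})$, thanks to the isotropicity of $\mu_{\bf t}$. The desired upper bound is therefore equivalent to the comparison
\begin{equation*}
I_1(\mu_{\bf t},K_{\rm iso})\ls c\,C_2(X_K)\int_{O(n)}I_1(\mu_{\bf t},UK_{\rm iso})\,d\nu(U),
\end{equation*}
i.e.\ to removing the rotational average while losing only a factor of $C_2(X_K)$. For isotropic log-concave measures this is precisely the type of result established by E.~Milman in \cite{EMilman-2006}, and my plan is to invoke that estimate directly.

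The main obstacle is exactly this last comparison. All the other ingredients --- the affine reduction, identity \eqref{eq:basic-identity}, the rotational-average computation, and the trivial $C_2(X_K)\gr 1$ used for the lower inequality --- are essentially formal. Passing from an $O(n)$-average of $I_1(\mu_{\bf t},UK_{\rm iso})$ to its value at a single rotation while paying only a factor of $C_2(X_K)$ is the genuinely nontrivial content; it relies on the Pisier-style factorization-through-cotype-$2$ machinery combined with the concentration properties of isotropic log-concave measures developed in \cite{EMilman-2006}, not something one can re-derive in a few lines inside the present proof.
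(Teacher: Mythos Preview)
Your proposal is correct. The upper bound follows exactly the paper's route: reduce to $K$ isotropic via affine invariance, apply the basic identity \eqref{eq:isotropic-identity} with $C=K$, and then invoke E.~Milman's inequality $I_1(\mu,K)\ls cC_2(X_K)\sqrt{n}M(K)$ from \cite{EMilman-2006} (recorded in the paper as \eqref{eq:Emanuel-1}). Your framing in terms of ``removing the rotational average'' is just a restatement of that same inequality.

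For the lower bound you take a genuinely different and shorter path. The paper (in the proof of Theorem~\ref{th:third}) introduces random signs, applies Kahane--Khintchine, uses the definition of $C_2(X_K)$ directly to pass to $(\sum t_j^2\|x_j\|_K^2)^{1/2}$, then a reverse H\"older step for log-concave measures, and finally Lemma~\ref{lem:BMMP}; this produces the factor $1/C_2(X_K)$ organically. You instead observe that Theorem~\ref{th:GM-short} already gives $\|{\bf t}\|_{K^s,K}\gr c\|{\bf t}\|_2$ outright, and then throw in the harmless $C_2(X_K)\gr 1$. Your argument is simpler and in fact shows that the $C_2(X_K)$ in the denominator of the lower bound is superfluous. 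The paper's longer argument, on the other hand, is self-contained in the sense that it uses cotype-$2$ explicitly and does not depend on the entropy machinery (Bobkov--Madiman and Shannon--Stam) underlying Theorem~\ref{th:GM-short}.
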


In Section~6 we consider the unconditional case; using an argument from \cite{GHT} which is based on well-known results of Bobkov and Nazarov
one has the following estimates.

\begin{theorem}If $K$ and $C_1,\ldots,C_s$ are isotropic unconditional convex bodies
in $\mathbb{R}^n$ then,
\begin{equation*}\|{\bf t}\|_{{\cal C},K}\ls c\sqrt{\log n}\cdot \max\{\|{\bf t}\|_2,\sqrt{\log n}\|{\bf t}\|_\infty\}\end{equation*}
for every ${\bf t}=(t_1,\ldots,t_s)\in \mathbb{R}^s$, where $c>0$ is an absolute constant.
\end{theorem}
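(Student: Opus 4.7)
The plan is to apply the Bobkov--Nazarov comparison repeatedly. Since each $C_j$ is isotropic (volume one),
\begin{equation*}\|{\bf t}\|_{{\cal C},K}=\mathbb{E}\Big\|\sum_{j=1}^st_jX_j\Big\|_K\end{equation*}
with $X_j$ independent and uniformly distributed on $C_j$. The Bobkov--Nazarov theorem says that for an isotropic unconditional convex body $C$ in $\R^n$, the uniform distribution on $C$ is dominated (in the sense of integration against unconditional convex test functions) by the product of $n$ symmetric exponential densities with variance one, up to an absolute constant dilation. In particular, the isotropic constants $L_{C_j}$ and $L_K$ are bounded by an absolute constant.

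The first step is to replace each $X_j$ by a vector $\xi_j$ with i.i.d.\ symmetric exponential coordinates, at the cost of an absolute constant. One cannot apply Bobkov--Nazarov directly to the function $x\mapsto\|\sum_{j'\ne j}t_{j'}X_{j'}+t_jx\|_K$, which is not unconditional in $x$ because of the shift. The standard fix is a sign-randomisation: since each $X_j$ is already unconditional, inserting an independent random sign pattern $\varepsilon$,
\begin{equation*}\mathbb{E}_{X_j}\|y+t_jX_j\|_K=\mathbb{E}_{X_j}\mathbb{E}_\varepsilon\|y+t_j\,{\rm diag}(\varepsilon)X_j\|_K,\end{equation*}
and the inner expectation, viewed as a function of $X_j$, is both convex and unconditional, so Bobkov--Nazarov applies. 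Reversing the sign-randomisation using the unconditionality of $\xi_j$ and iterating over $j=1,\dots,s$ yields
\begin{equation*}\|{\bf t}\|_{{\cal C},K}\ls c\,\mathbb{E}\Big\|\sum_{j=1}^st_j\xi_j\Big\|_K.\end{equation*}

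The vector $\zeta:=\sum_j t_j\xi_j$ now has i.i.d.\ coordinates, each a weighted sum of independent symmetric exponentials, so Bernstein's inequality gives
\begin{equation*}\mathbb{P}(|\zeta_1|>u)\ls 2\exp\bigl(-c\min(u^2/\|{\bf t}\|_2^2,\,u/\|{\bf t}\|_\infty)\bigr).\end{equation*}
This is, up to absolute constants, the tail of $\|{\bf t}\|_2|g|+\|{\bf t}\|_\infty|E|$ with $g$ standard Gaussian and $E$ standard symmetric exponential. A monotone coupling in each coordinate, combined with the fact that $\|\cdot\|_K$ is monotone in the absolute values of the coordinates (a property of unconditional bodies), then yields
\begin{equation*}\mathbb{E}\|\zeta\|_K\ls c\bigl(\|{\bf t}\|_2\mathbb{E}\|G\|_K+\|{\bf t}\|_\infty\mathbb{E}\|E\|_K\bigr),\end{equation*}
where $G$ is a standard Gaussian vector and $E$ is an i.i.d.\ symmetric exponential vector in $\R^n$.

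The final input, again a classical consequence of Bobkov--Nazarov applied to the isotropic unconditional body $K$ itself, is
\begin{equation*}\mathbb{E}\|G\|_K\ls c\sqrt{\log n},\qquad \mathbb{E}\|E\|_K\ls c\log n.\end{equation*}
Substituting gives $\|{\bf t}\|_{{\cal C},K}\ls c\sqrt{\log n}\,\|{\bf t}\|_2+c\log n\,\|{\bf t}\|_\infty$, which rearranges into the claimed bound. I expect the main technical obstacle to be the sign-randomisation step that legitimises the coordinate-wise Bobkov--Nazarov comparison inside the sum with a fixed shift; once the reduction to i.i.d.\ symmetric exponential vectors is in place, the tail analysis of $\zeta$ and the Gaussian/exponential width estimates for unconditional $K$ are essentially routine.
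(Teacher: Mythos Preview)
Your argument is correct and follows a genuinely different route from the paper's proof. Both proofs rest on the two Bobkov--Nazarov facts (the inclusion $cB_\infty^n\subseteq K$ and the comparison of an isotropic unconditional body with the scaled $\ell_1^n$-ball or, equivalently, with the product exponential measure), but they deploy them very differently. The paper bounds $\|\cdot\|_K$ by $\|\cdot\|_{2q}$, expands the $2q$-th moment of $\sum_j t_jX_j$ via the multinomial formula, applies Bobkov--Nazarov coordinate by coordinate to each factor $\int_{C_i}x_{ij}^{2q_i}\,dx_i$, evaluates the resulting simplex integrals exactly, and then invokes a combinatorial bound on the polynomial $P_q(y)=\sum_{q_1+\cdots+q_s=q}y_1^{q_1}\cdots y_s^{q_s}$; optimising in $q\approx\log n$ yields the claim together with the full $L_q$-version $\big(\mathbb{E}\|\sum_j t_jX_j\|_K^q\big)^{1/q}\ls c\,n^{1/q}\sqrt{q}\max\{\|{\bf t}\|_2,\sqrt{q}\|{\bf t}\|_\infty\}$. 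Your approach instead applies the comparison at the vector level (your sign-randomisation device is exactly what makes this legitimate, and it is the only nontrivial point), reducing to i.i.d.\ exponential coordinates, and then lets Bernstein's inequality and a coordinatewise monotone coupling explain transparently why the two scales $\|{\bf t}\|_2$ and $\|{\bf t}\|_\infty$ appear, before finishing with $\mathbb{E}\|G\|_K\lesssim\sqrt{\log n}$ and $\mathbb{E}\|E\|_K\lesssim\log n$. Your route is more probabilistic and avoids the explicit combinatorics; the paper's route has the advantage of delivering all moments at once with the sharp $q$-dependence.
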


As an application of Theorem~\ref{th:intro-type-cotype} and of the ``$\psi_2$-version" of Theorem~\ref{th:general} we can check
that in the special case of the unit ball $B_p^n$ of $\ell_p^n$, $1\ls p\ls\infty $, one has the upper bound
\begin{equation*}\|{\bf t}\|_{\overline{B_p^n}^s,\overline{B_p^n}} \ls c\,\min\{\sqrt{p},\sqrt{\log n}\}\,\|{\bf t}\|_2\end{equation*}
for every $s\gr 1$ and ${\bf t}\in {\mathbb R}^s$, where $c>0$ is an absolute constant (and, generally, $\overline{K}=\vol_n(K)^{-1/n}K$).

In Section~7 we discuss applications of the previous results to some randomized versions
of vector balancing problems. Given two centrally symmetric convex bodies $C, K$ in $\mathbb{R}^n$, the parameter $\beta_s(C,K)$ is
defined as follows:
\begin{equation*}\beta_s(C,K):=\min\Big\{r>0:\;\;\hbox{for any}\;x_1,\ldots,x_s \in C,\;
\min_{\epsilon\in E_2^s}\Big\|\sum_{j=1}^s\epsilon_j x_j \Big\|_K \ls r\Big\},\end{equation*}
where $E_2^s:=\{-1,1\}^s$ is the discrete cube in ${\mathbb R}^s$. Given $x_1,\ldots,x_n \in K$, by the triangle inequality it
is clear that $\|\sum_{j=1}^n \epsilon_j x_j\|_K \ls n$ holds for every $\epsilon\in E_2^n$, thus $\beta_n(K,K)\ls n$.
This bound is actually sharp: taking $K=B_1^n$ and $x_j=e_j$, the standard basis of $\mathbb{R}^n$, we get $\|\sum_{j=1}^n \epsilon_j e_j \|_1 = n$
for every choice of signs. However, the upper bound for $\beta_n(K,K)$ can be significantly better for certain convex bodies,
as suggested for example by a theorem of Spencer \cite{Spe}: one has $\beta_n(B_\infty^n, B_\infty^n)\ls 6\sqrt{n}$.

We further define $\tilde{\beta}(C,K)=\sup_{k\gr n}\beta_k(C,K)$.
Clearly, $\beta_n(C,K)\ls \tilde{\beta}(C,K)$. By a theorem of B\'{a}r\'{a}ny and Grinberg \cite{Bar.Gri},
one has $\tilde{\beta}(K,K)\ls 2n$. This result can also be derived by the trivial bound on $\beta_n(K,K)$ mentioned earlier
and the general observation that
\begin{equation*}\tilde{\beta }(C,K)\ls 2\,\max_{k\ls n}\beta_k(C,K).\end{equation*}
A related result is the Dvoretzky-Hanani lemma (see for example \cite[Lemma 2.2.1]{Kadets.book})
which asserts that for every centrally symmetric convex body $K$ in $\mathbb{R}^n$, for any $s\gr 1$ and any $x_1,\ldots,x_s\in K$,
there exist $\epsilon_1,\ldots,\epsilon_s \in \{-1,1\}$ such that $\max_{k\ls s}\|\sum_{j=1}^k \epsilon_j x_j\|_K\ls 2n$.

The question that we discuss is whether one can achieve something better than the $O(n)$ bound for a random $s$-tuple
$(x_1,\ldots, x_s)$ from $C$. In order to make this question precise, for any $\delta\in (0,1)$ we introduce the parameter
\begin{equation*}\beta_{\delta ,s}^{(R)}(C,K):=\min\Big\{ r>0:{\rm vol}_{ns}\Big(\Big\{(x_j)_{j=1}^s:x_j\in C
\;\hbox{for all}\;j\;\hbox{and}\;\min_{\epsilon\in E_2^s}
\Big\|\sum_{j=1}^s\epsilon_jx_j\Big\|_K\ls r\Big\}\Big)\gr 1-\delta \Big\}.\end{equation*}
The results of Section~4 and Section~5 allow us to obtain significantly better bounds for $\beta_{\delta ,s}^{(R)}(C,K)$.
In the statement below we restrict ourselves to the case $C=K$ and $s=n$; the reader may deduce analogous
bounds for an arbitrary choice of $C$ or $s$.

\begin{theorem}\label{th:r-barany-grinberg}
Let $K$ be a centrally symmetric convex body in ${\mathbb R}^n$. Then, for any $\delta\in (0,1)$,
\begin{equation*}\beta_{\delta ,n}^{(R)}(K,K)\ls (c\log (2/\delta )L_Kn^{3/4})\,\sqrt{n}M(K_{{\rm iso}})\end{equation*}
where $c>0$ is an absolute constant and $K_{{\rm iso}}$ is an isotropic image of $K$. If $K$ is a $\psi_2$-body with constant $\varrho $ then
\begin{equation*}\beta_{\delta ,n}^{(R)}(K,K)\ls (c\log (2/\delta )\varrho^2 \sqrt{n})\,\sqrt{n}M(K_{{\rm iso}}).\end{equation*}
\end{theorem}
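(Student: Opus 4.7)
The plan is to reduce the problem to an $L^1$ estimate for a seminorm of a log-concave random vector and then to upgrade that estimate to a high-probability bound via Borell's lemma. First, I would observe that $\beta_{\delta,n}^{(R)}(K,K)$ is invariant under the substitution $K\mapsto TK$ for every $T\in GL(n)$: the change of variables $x_j=Ty_j$ transforms $\|\sum\epsilon_j x_j\|_{TK}$ into $\|\sum\epsilon_j y_j\|_K$ and carries the uniform distribution on $TK$ to the uniform distribution on $K$. Hence we may assume $K=K_{{\rm iso}}$. Let $x_1,\ldots,x_n$ be independent and uniform on $K$, and set $g(x_1,\ldots,x_n):=\|\sum_{j=1}^n x_j\|_K$. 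Since $K$ is symmetric, each $\epsilon_j x_j$ has the same law as $x_j$, and the fixed sign pattern $\epsilon=(1,\ldots,1)$ supplies the pointwise domination
\[\min_{\epsilon\in E_2^n}\Big\|\sum_{j=1}^n\epsilon_j x_j\Big\|_K\ls g(x_1,\ldots,x_n);\]
it thus suffices to produce a $(1-\delta)$-probability upper bound for the seminorm $g$ on $(\mathbb{R}^n)^n$.

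For the first moment of $g$ I would apply Theorem \ref{th:general} with $C=K$, $s=n$ and ${\bf t}=(1,\ldots,1)$: since $\|{\bf t}\|_2=\sqrt{n}$ and $\max\{n^{1/4},\sqrt{\log(1+n)}\}\ls c\,n^{1/4}$, this yields
\[\mathbb{E}\,g=\|(1,\ldots,1)\|_{K^n,K}\ls c\,L_K\,n^{3/4}\,\sqrt{n}\,M(K_{{\rm iso}}),\]
which already matches the target up to absolute constants. In the $\psi_2$ case the sharper variant of Theorem \ref{th:general} stated just below it replaces the factor $L_K\,n^{1/4}$ by $\varrho^2$, yielding $\mathbb{E}\,g\ls c\,\varrho^2\,\sqrt{n}\cdot\sqrt{n}\,M(K_{{\rm iso}})$.

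Finally, since $(x_1,\ldots,x_n)$ is a log-concave vector in $\mathbb{R}^{n^2}$ and $g$ is a seminorm of this vector, Borell's lemma delivers the reverse H\"older estimate $(\mathbb{E}\,g^p)^{1/p}\ls Cp\,\mathbb{E}\,g$ for every $p\gr 1$, which in turn gives the sub-exponential tail
\[\Pr\{g\gr t\,\mathbb{E}\,g\}\ls C'e^{-c't}\qquad(t\gr 1).\]
Choosing $t\asymp\log(2/\delta)$ and combining with the pointwise domination $\min\ls g$ would then produce a $(1-\delta)$-probability bound on $\min_\epsilon\|\sum\epsilon_j x_j\|_K$ carrying the claimed $\log(2/\delta)$ prefactor, and substituting the two mean estimates finishes both parts of the theorem. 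The main subtlety I anticipate is that the minimum over signs is not itself convex in $(x_1,\ldots,x_n)$, so Borell cannot be applied to it directly; the crude pointwise bound $\min\ls g$ is what allows us to invoke log-concave machinery at all, and by the symmetry of $K$ it costs nothing at the level of expectations, since $\mathbb{E}\,g=\mathbb{E}_x\mathbb{E}_\epsilon\|\sum\epsilon_j x_j\|_K$.
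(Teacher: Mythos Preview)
Your proposal is correct and follows essentially the same route as the paper's proof: reduce to the isotropic position, bound the minimum over signs by the single seminorm $g(x_1,\ldots,x_n)=\|\sum x_j\|_K$, apply Borell's reverse H\"older inequality for seminorms of log-concave vectors (this is exactly Lemma~\ref{lem:higher-moments}), pass to a tail bound via Markov with $q\approx\log(2/\delta)$, and then plug in the first-moment estimates from Theorem~\ref{th:general} and its $\psi_2$ variant. The only cosmetic difference is that the paper writes the domination via the symmetry identity ${\mathbb E}_{K^n}\|\sum x_j\|_K^q={\mathbb E}_{K^n}{\mathbb E}_\epsilon\|\sum\epsilon_jx_j\|_K^q$ before passing to the minimum, whereas you bound the minimum pointwise by the fixed choice $\epsilon=(1,\ldots,1)$; these are equivalent.
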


Analogous results hold for $2$-convex bodies with constant $\alpha $, in which case we have
\begin{equation*}\beta_{\delta ,n}^{(R)}(K,K)\ls (c\log (2/\delta )\sqrt{n}/\alpha ),\end{equation*}
or bodies with bounded cotype-$2$ constant; in this case we have
\begin{equation*}\beta_{\delta ,n}^{(R)}(K,K)\ls (c\log (2/\delta )L_KC_2(X_K)\sqrt{n})\,\sqrt{n}M(K_{{\rm iso}}).\end{equation*}
In fact, the proof of Theorem~\ref{th:r-barany-grinberg}
shows that the same upper bounds hold for the parameter $\kappa_{\delta ,s}^{(R)}(C,K)$ which is defined
as the smallest $r>0$ with the property that
\begin{equation*}{\rm vol}_{ns}\Big(\Big\{(x_j)_{j=1}^s:x_j\in C\;\hbox{for all}\;j\;\hbox{and}\; {\mathbb P}\Big(\Big\{ \epsilon\in E_2^s:
\Big\|\sum_{j=1}^s\epsilon_jx_j\Big\|_K\ls r\Big\}\Big)\gr 1-\delta \Big\}\Big)\gr 1-\delta .\end{equation*}
Note that, by definition, $\kappa_{\delta ,s}^{(R)}(C,K)\gr\beta_{\delta ,s}^{(R)}(C,K)$.

Finally, combining our approach with some classical results from asymptotic convex geometry we obtain variants of the
main results of \cite{Chasapis-Skarmogiannis} as well as their dual estimates. We close this introductory section with
the statements in the particular case $C=\overline{B_2^n}$.

\begin{theorem}\label{th:section-7}
Let ${\bf t}\in {\mathbb R}^s$. For any centrally symmetric convex body $K$ in ${\mathbb R}^n$ and
any $S\subseteq E_2^s$ with $|S|\ls e^{cd(K)}$ we have
\begin{equation*}\vol_{ns}\Big(\Big\{(x_j)_{j=1}^s:x_j\in \overline{B_2^n}\;\;\hbox{for all}\;j\;\hbox{and}\,
\,\Big\|\sum_{j=1}^s\epsilon_jt_jx_j\Big\|_K\ls c_1L_C\sqrt{n}M(K)\,\|{\bf t}\|_2\;\;\hbox{for some}\;\epsilon\in S\Big\}\Big )
\ls e^{-c_2d(K)}\end{equation*}
and
\begin{equation*}\vol_{ns}\Big(\Big\{(x_j)_{j=1}^s:x_j\in \overline{B_2^n}\;\;\hbox{for all}\;j\;\hbox{and}\;
\Big\|\sum_{j=1}^s\epsilon_jt_jx_j\Big\|_K\gr c_3L_C\sqrt{n}M(K)\,\|{\bf t}\|_2\;\;\hbox{for some}\;\epsilon\in S\Big\}\Big)
\ls e^{-c_4k(K)},\end{equation*}
where $c_i>0$ are absolute constants.
\end{theorem}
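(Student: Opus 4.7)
The plan is to exploit the rotational invariance that the choice $C=\overline{B_2^n}$ provides, combine it with radial--angular concentration, and finish by a union bound over $S$. Fix $\mathbf{t}\in\mathbb{R}^s$ and for $\epsilon\in E_2^s$ put $Z_\epsilon := \sum_{j=1}^s \epsilon_j t_j x_j$, where $x_1,\ldots,x_s$ are i.i.d.\ uniform on $\overline{B_2^n}$. Since $\overline{B_2^n}$ is symmetric under reflection and invariant under the action of $O(n)$, we have $\epsilon_j x_j \stackrel{d}{=} x_j$ and $Ux_j \stackrel{d}{=} x_j$ for every $U\in O(n)$. Hence all the vectors $Z_\epsilon$ share a common marginal distribution, which is rotationally invariant; write it as $Z_\epsilon \stackrel{d}{=} R\theta$ with $R\gr 0$ and $\theta$ uniform on $S^{n-1}$ independent of $R$, so that $\|Z_\epsilon\|_K \stackrel{d}{=} R\,\|\theta\|_K$ with independent factors.

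By the basic identity \eqref{eq:basic-identity} together with the isotropicity of $\overline{B_2^n}$, the random vector $Z_\epsilon/(\|\mathbf{t}\|_2 L_C)$ is isotropic and log-concave. Paouris's large-deviation theorem and its small-ball counterpart for the Euclidean norm of an isotropic log-concave vector therefore give
\[
\mathbb{P}\Big(R\notin [c_1 L_C\sqrt{n}\|\mathbf{t}\|_2,\; c_2 L_C\sqrt{n}\|\mathbf{t}\|_2]\Big)\ls e^{-c_3\sqrt{n}}.
\]
For the angular factor, classical Dvoretzky--Milman concentration on $S^{n-1}$ yields $\mathbb{P}(\|\theta\|_K\gr c_4 M(K))\ls e^{-c_5 k(K)}$, while the corresponding small-ball estimate (Klartag--Vershynin type) gives $\mathbb{P}(\|\theta\|_K\ls c_6 M(K))\ls e^{-c_7 d(K)}$. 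Combining the radial and angular inequalities via independence, and using $d(K)\ls k(K)\ls n$ to absorb the $e^{-c_3\sqrt{n}}$ term, one obtains, for each fixed $\epsilon\in E_2^s$,
\[
\mathbb{P}(\|Z_\epsilon\|_K\ls c_8 L_C\sqrt{n}M(K)\|\mathbf{t}\|_2)\ls e^{-c_9 d(K)},
\]
\[
\mathbb{P}(\|Z_\epsilon\|_K\gr c_{10} L_C\sqrt{n}M(K)\|\mathbf{t}\|_2)\ls e^{-c_{11} k(K)}.
\]

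The conclusion is then a direct union bound over $S$, provided the absolute constant $c$ in the hypothesis $|S|\ls e^{cd(K)}$ is chosen small relative to $c_9$ and $c_{11}$; for the second inequality one additionally uses $d(K)\ls k(K)$ so that $|S|$ is absorbed into the $e^{-c_{11} k(K)}$ estimate. The main obstacle is locating the precise small-ball estimate on the sphere with dimension parameter matching the $d(K)$ used in \cite{Chasapis-Skarmogiannis}: everything else amounts to putting together classical results. A secondary point is bookkeeping of constants so that the factor $L_C$ (for $C=\overline{B_2^n}$ an absolute constant) appears exactly as stated, and so that the proof extends verbatim to general $S \subseteq E_2^s$, not just to $S=E_2^s$ where the union bound would be trivial.
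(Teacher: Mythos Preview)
Your radial--angular factorisation is precisely the mechanism behind the paper's argument: the paper packages it via moments, writing $\int\|Z\|_K^{\pm q}=L_C^{\pm q}I_{\pm q}(\mu_{\mathbf t})\,M_{\pm q}(K)$ and then invoking Markov, but that product formula is nothing other than the independence of $R$ and $\theta$ that you exploit. So the strategy is right; the execution has two concrete gaps.

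First, the inequality between $d(K)$ and $k(K)$ goes the other way: one always has $d(K)\gr c\,k(K)$ (this is part of the Klartag--Vershynin result quoted in Section~7), not $d(K)\ls k(K)$. Consequently the hypothesis $|S|\ls e^{cd(K)}$ does \emph{not} imply $|S|\ls e^{c'k(K)}$, and your union-bound justification for the second inequality does not go through as written. In the paper the upper-tail estimate (Corollary~\ref{cor:rot-upper-bound-p}) is in fact stated under the separate, stronger hypothesis $|S|\ls e^{ck(K)}$.

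Second, and more seriously, the general Paouris bound $e^{-c_3\sqrt{n}}$ for the radial part is too weak to be absorbed into either $e^{-c\,d(K)}$ or $e^{-c\,k(K)}$: both parameters can be as large as $n$ (take $K$ a Euclidean ball), and then $\sqrt{n}\ll d(K)=k(K)=n$. What saves the situation for $C=\overline{B_2^n}$ is that this body is $\psi_2$ with an absolute constant, hence $\mu_{\mathbf t}$ is $\psi_2$ and $q_\ast(\mu_{\mathbf t})\gr cn$; Paouris' theorems then yield the sharper radial estimate $\mathbb{P}\big(R\notin[c_1,c_2]\cdot L_C\sqrt{n}\,\|\mathbf t\|_2\big)\ls e^{-c_3 n}$, which \emph{can} be absorbed since $d(K),k(K)\ls n$ by definition. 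The paper makes exactly this observation in the paragraph preceding Corollary~\ref{cor:rot-lower-bound-q}. Once you upgrade the radial exponent from $\sqrt{n}$ to $n$ and correct the direction of the $d(K)$/$k(K)$ comparison, your argument is complete and matches the paper's.
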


The quantities $k(K)$ and $d(K)$ are well-known parameters of a centrally symmetric convex body $K$ which are introduced in Section~7;
$k(K)=n(M(K)/b(K))^2$ is the Dvoretzky dimension of $K$ and
\begin{equation*}d(K)=\min\Big\{ n,-\log\gamma_n\Big(\frac{m(K)}{2}K\Big)\Big\},\end{equation*}
where $m(K)\approx \sqrt{n}M(K)$ is the median (the L\'{e}vy mean) of $\|\cdot\|_K$ with
respect to the standard Gaussian measure $\gamma_n$ on $\mathbb{R}^n$.

\section{Backgound information and preliminary observations}\label{sec:background}

In this section we introduce notation and terminology that we use throughout this work, and provide background
information on isotropic convex bodies. We write $\langle\cdot ,\cdot\rangle $ for the standard inner product in ${\mathbb R}^n$ and denote the
Euclidean norm by $\|\cdot \|_2$. In what follows, $B_2^n$ is the Euclidean unit ball, $S^{n-1}$ is the unit sphere, and $\sigma $ is the
rotationally invariant probability measure on $S^{n-1}$. Lebesgue measure in ${\mathbb R}^n$ is denoted by ${\rm vol}_n$.
The letters $c, c^{\prime },c_j,c_j^{\prime }$ etc. denote absolute positive constants whose value may change from line to line.
Sometimes we might even relax our notation: $a\lesssim b$ will then mean ``$a\ls cb$ for some (suitable) absolute constant $c>0$'',
and $a \approx b$ will stand for ``$a \lesssim b \land a \gtrsim b$". If $A, B$ are sets, $A \approx B$
will similarly state that $c_1A\subseteq B \subseteq c_2 A$ for some absolute constants $c_1,c_2>0$.

A convex body in ${\mathbb R}^n$ is a compact convex set $C\subset {\mathbb R}^n$ with non-empty interior. We say that $C$ is {\bf centrally symmetric}
if $-C=C$. We say that $C$ is unconditional with respect to the standard orthonormal basis $\{e_1,\ldots ,e_n\}$ of ${\mathbb R}^n$ if $x=(x_1,\ldots ,x_n)\in C$
implies that $(\epsilon_1x_1,\ldots ,\epsilon_nx_n)\in C$ for any choice
of signs $\epsilon_j\in\{ -1,1\}$. The volume radius of $C$ is the quantity ${\rm vrad}(C)=\left (\vol_n(C)/\vol_n(B_2^n)\right )^{1/n}$.
Integration in polar coordinates shows that if the origin is an interior point of $C$ then the volume radius of $C$ can be expressed as
\begin{equation*}{\rm vrad}(C)=\Big (\int_{S^{n-1}}\|\xi\|_C^{-n}\,d\sigma (\xi )\Big)^{1/n},\end{equation*}
where $\|\xi\|_C=\inf\{ t>0:\xi\in tC\}$. We also consider the parameter
\begin{equation*}M(C)=\int_{S^{n-1}}\|\xi\|_Cd\sigma (\xi ).\end{equation*}
The support function of $C$ is defined by $h_C(y):=\max \bigl\{\langle x,y\rangle :x\in C\bigr\}$, and
the mean width of $C$ is the average
\begin{equation*}w(C):=\int_{S^{n-1}}h_C(\xi)\,d\sigma (\xi)\end{equation*}
of $h_C$ on $S^{n-1}$. The radius $R(C)$ of a centrally symmetric convex body $C$ is the smallest $R>0$ such that $C\subseteq RB_2^n$.
We shall use the fact that $R(C)\ls c\sqrt{n}w(C)$; equivalently, $b(C)\ls c\sqrt{n}M(C)$, where
$b(C)$ is the smallest $b>0$ with the property that $\|x\|_C\ls b\|x\|_2$ for all $x\in {\mathbb R}^n$.
For notational convenience we write $\overline{C}$ for
the homothetic image of volume $1$ of a convex body $C\subseteq
\mathbb R^n$, i.e. $\overline{C}:= \vol_n(C)^{-1/n}C$.

The polar body $C^{\circ }$ of a centrally symmetric convex body $C$ in ${\mathbb R}^n$ is defined by
\begin{equation*}
C^{\circ}:=\bigl\{y\in {\mathbb R}^n: \langle x,y\rangle \ls 1\;\hbox{for all}\; x\in C\bigr\}.
\end{equation*}
The Blaschke-Santal\'{o} inequality states that ${\rm vol}_n(C){\rm vol}_n(C^{\circ })\ls {\rm vol}_n(B_2^n)^2$,
with equality if and only if $C$ is an ellipsoid. The reverse Santal\'{o} inequality of Bourgain and V. Milman
\cite{Bourgain-VMilman-1987} asserts that there exists an absolute constant $c>0$ such
that, conversely,
\begin{equation*}\Big ({\rm vol}_n(C){\rm vol}_n(C^{\circ })\Big )^{1/n}\gr c\,\vol_n(B_2^n)^{2/n}\approx 1/n.\end{equation*}
A convex body $C$ in ${\mathbb R}^n$ is called isotropic if it has volume $1$, it is centered, i.e.~its
barycenter is at the origin, and its inertia matrix is a multiple of the identity matrix:
there exists a constant $L_C >0$ such that
\begin{equation}\label{isotropic-condition}\|\langle \cdot ,\xi\rangle\|_{L_2(C)}^2:=\int_C\langle x,\xi\rangle^2dx =L_C^2\end{equation}
for all $\xi\in S^{n-1}$. We shall use the fact that if $C$ is isotropic then $R(C)\ls cnL_C$ for some
absolute constant $c>0$. The hyperplane conjecture asks if there exists an absolute constant $A>0$ such that
\begin{equation}\label{HypCon}L_n:= \max\{ L_C:C\ \hbox{is isotropic in}\ {\mathbb R}^n\}\ls A\end{equation}
for all $n\gr 1$. Bourgain proved in \cite{Bourgain-1991} that $L_n\ls c\sqrt[4]{n}\log\! n$; later, Klartag \cite{Klartag-2006}
improved this bound to $L_n\ls c\sqrt[4]{n}$.

A Borel measure $\mu$ on $\mathbb R^n$ is called $\log$-concave if $\mu(\lambda
A+(1-\lambda)B) \gr \mu(A)^{\lambda}\mu(B)^{1-\lambda}$ for any compact subsets $A$
and $B$ of ${\mathbb R}^n$ and any $\lambda \in (0,1)$. A function
$f:\mathbb R^n \rightarrow [0,\infty)$ is called $\log$-concave if
its support $\{f>0\}$ is a convex set and the restriction of $\log{f}$ to it is concave.
It is known that if a probability measure $\mu $ is log-concave and $\mu (H)<1$ for every
hyperplane $H$, then $\mu $ has a log-concave density $f_{{\mu }}$. Note that if $C$ is a convex body in
$\mathbb R^n$ then the Brunn-Minkowski inequality implies that
$\mathbf{1}_{C} $ is the density of a $\log$-concave measure.

If $\mu $ is a log-concave measure on ${\mathbb R}^n$ with density $f_{\mu}$, we define the isotropic constant of $\mu $ by
\begin{equation}\label{definition-isotropic}
L_{\mu }:=\left (\frac{\sup_{x\in {\mathbb R}^n} f_{\mu} (x)}{\int_{{\mathbb
R}^n}f_{\mu}(x)dx}\right )^{\frac{1}{n}} [\det \textrm{Cov}(\mu)]^{\frac{1}{2n}},\end{equation}
where $\textrm{Cov}(\mu)$ is the covariance matrix of $\mu$ with entries
\begin{equation}\textrm{Cov}(\mu )_{ij}:=\frac{\int_{{\mathbb R}^n}x_ix_j f_{\mu}
(x)\,dx}{\int_{{\mathbb R}^n} f_{\mu} (x)\,dx}-\frac{\int_{{\mathbb
R}^n}x_i f_{\mu} (x)\,dx}{\int_{{\mathbb R}^n} f_{\mu}
(x)\,dx}\frac{\int_{{\mathbb R}^n}x_j f_{\mu}
(x)\,dx}{\int_{{\mathbb R}^n} f_{\mu} (x)\,dx}.\end{equation} We say
that a $\log $-concave probability measure $\mu $ on ${\mathbb R}^n$
is isotropic if it is centered, i.e. if
\begin{equation}
\int_{\mathbb R^n} \langle x, \xi \rangle d\mu(x) = \int_{\mathbb
R^n} \langle x, \xi \rangle f_{\mu}(x) dx = 0
\end{equation} for all $\xi\in S^{n-1}$, and $\textrm{Cov}(\mu )$ is the identity matrix.

If $C$ is a centered convex body of volume $1$ in $\mathbb R^n$ then we say that a direction $\xi\in S^{n-1}$ is a
$\psi_{\alpha }$-direction (where $1\ls\alpha\ls 2$) for $C$ with constant $\varrho >0$ if
\begin{equation*}\|\langle\cdot ,\xi\rangle\|_{L_{\psi_{\alpha }}(C)}\ls \varrho\|\langle\cdot, \xi\rangle\|_{L_2(C)},\end{equation*}
where \begin{equation*} \|\langle \cdot ,\xi\rangle \|_{L_{\psi_{\alpha}}(C)}:=\inf \Big \{ t>0 :
\int_C\exp \big((|\langle x,\xi\rangle |/t)^\alpha\big) \,
dx\ls 2 \Big\}.
\end{equation*}From Markov's inequality it is clear that if $C$ satisfies a
$\psi_\alpha$-estimate with constant $\varrho $ in the direction of
$\xi $ then for all $t\gr 1$ we have $\vol_n(\{x\in C : |\langle x,\xi\rangle|\gr t \|\langle\cdot ,\xi\rangle\|_{L_2(C)}\})\ls
2e^{-t^a/\varrho^\alpha}$. Conversely, it is a standard fact that tail estimates of this form
imply that $\xi $ is a $\psi_{\alpha }$-direction for $C$.
Similar definitions may be given in the context of a centered log-concave probability measure
$\mu $ on ${\mathbb R}^n$. From log-concavity it follows that every $\xi \in S^{n-1}$ is a $\psi_1$-direction
for any $C$ or $\mu $ with an absolute constant $\varrho $: there exists $\varrho >0$ such that
\begin{equation*}\|\langle\cdot ,\xi\rangle\|_{L_{\psi_1}(\mu )}\ls \varrho\|\langle\cdot, \xi\rangle\|_{L_2(\mu )}\end{equation*}
for all $n\gr 1$, all centered log-concave probability measures $\mu $ on ${\mathbb R}^n$ and all $\xi\in S^{n-1}$.
We refer the reader to the book \cite{BGVV} for an updated exposition of isotropic log-concave
measures and more information on the hyperplane conjecture.

We close this introductory section with a lemma that may be viewed as a form of generalization of Khinchine's inequality, where the randomness is no longer
that of Bernoulli $\{-1,1\}$ random variables but here is given by random vectors in the bodies $C_1,\ldots ,C_s$.

\begin{lemma}\label{lem:higher-moments}
Let $C_1,\ldots ,C_s$ be convex bodies of volume $1$ and $K$ be a centrally symmetric convex body in $\mathbb{R}^n$. Then,
\begin{equation*}\Big ({\mathbb E}_{{\cal C}}\Big\|\sum_{j=1}^st_jx_j\Big\|_K^q\Big )^{1/q}\ls cq\,\|{\bf t}\|_{{\cal C},K},\end{equation*}
where $c>0$ is an absolute constant.
\end{lemma}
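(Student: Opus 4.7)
The plan is to recognize the left-hand side as the $L^q$-moment of a seminorm with respect to a log-concave probability measure on $\mathbb{R}^{ns}$, and then to invoke the standard Borell-type comparison between $L^q$- and $L^1$-moments of seminorms under log-concave measures.

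First I would set $N=ns$ and let $\mu$ denote the uniform probability measure on the product $C_1\times\cdots\times C_s\subseteq\mathbb{R}^N$, whose density is $\prod_{j=1}^s\mathbf{1}_{C_j}(x_j)$. Each factor is log-concave by Brunn--Minkowski (being the indicator of a convex body of volume one), so the product density is log-concave and hence $\mu$ is a log-concave probability measure on $\mathbb{R}^N$. Next, define
\[ F(x_1,\ldots,x_s):=\Big\|\sum_{j=1}^s t_j x_j\Big\|_K. \]
Since $K$ is centrally symmetric and convex, $\|\cdot\|_K$ is a norm on $\mathbb{R}^n$, and the linear map $(x_1,\ldots,x_s)\mapsto\sum_j t_j x_j$ from $\mathbb{R}^N$ to $\mathbb{R}^n$ makes $F$ a seminorm on $\mathbb{R}^N$.

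I would then apply the following consequence of Borell's lemma: for any log-concave probability measure $\nu$ on a finite-dimensional space and any seminorm $f$ on it,
\[ \Big(\int f^q\,d\nu\Big)^{1/q}\ls c\,q\int f\,d\nu,\qquad q\gr 1, \]
with $c>0$ absolute. This is obtained by applying Borell's lemma to the symmetric convex set $A=\{f\ls t_0\,\|f\|_{L^1(\nu)}\}$, which has $\nu$-measure $>1/2$ for an absolute $t_0>0$ by Markov's inequality; Borell then yields the exponential tail $\nu(\{f>t\,\|f\|_{L^1(\nu)}\})\ls Ce^{-ct}$ for $t\gr t_0$, which upon integrating $qt^{q-1}$ against the tail delivers the desired $L^q$--$L^1$ comparison. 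Applied to $\nu=\mu$ and $f=F$, the right-hand side equals $cq\,\|{\bf t}\|_{{\cal C},K}$, which is exactly the claimed inequality. There is no genuine obstacle beyond verifying the log-concavity of $\mu$ and the seminorm property of $F$; the quantitative content is packaged entirely in Borell's lemma, which is why the constant grows linearly in $q$.
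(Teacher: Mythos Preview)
Your proposal is correct and is essentially identical to the paper's own argument: the paper applies the same $L^q$--$L^1$ comparison for seminorms under log-concave measures (citing \cite[Theorem~2.4.6]{BGVV}) to the seminorm $(x_1,\ldots,x_s)\mapsto\big\|\sum_{j=1}^s t_jx_j\big\|_K$ on $\mathbb{R}^{ns}$ with respect to the uniform measure on $C_1\times\cdots\times C_s$. The only difference is that you additionally sketch why that $L^q$--$L^1$ comparison holds via Borell's lemma, which the paper simply cites as a black box.
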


The lemma follows immediately from the fact (see \cite[Theorem~2.4.6]{BGVV}) that if $\mu $ is a log-concave
probability measure on ${\mathbb R}^k$ and $f:{\mathbb R}^k\to {\mathbb R}$ is a seminorm then, for any $q\gr 1$,
\begin{equation*}\|f\|_{L_q(\mu )}\ls cq\,\|f\|_{L_1(\mu )},\end{equation*}
where $c>0$ is an absolute constant. We apply this fact on ${\mathbb R}^{ns}$ for the semi-norm
\begin{equation*}(x_1,\ldots ,x_s)\mapsto \Big\|\sum_{j=1}^st_jx_j\Big\|_K\end{equation*}
and the uniform measure on $C_1\times\cdots\times C_s$.

\section{A basic identity and a proof of the lower bound}

In this section we assume that $C_1,\ldots ,C_s$ are centrally symmetric convex bodies of volume $1$ in ${\mathbb R}^n$ and study the quantity
\begin{equation*}\|{\bf t}\|_{{\cal C},K}=\int_{C_1}\cdots\int_{C_s}\Big\|\sum_{j=1}^st_jx_j\Big\|_K\,dx_1\cdots dx_s\end{equation*}
where ${\bf t}=(t_1,\ldots ,t_s)$ and $K$ is a centrally symmetric convex body in ${\mathbb R}^n$.  By the symmetry of the $C_j$'s we have that
\begin{equation*}\int_{C_1}\cdots\int_{C_s}\Big\|\sum_{j=1}^st_jx_j\Big\|_K\,dx_1\cdots dx_s
=\int_{C_1}\cdots\int_{C_s}\Big\|\sum_{j=1}^s\epsilon_jt_jx_j\Big\|_K\,dx_1\cdots dx_s\end{equation*}
for all $\epsilon =(\epsilon_1,\ldots ,\epsilon_s)\in E_2^s$, therefore we may always assume that
$t_1,\ldots ,t_s\gr 0$. Our starting point is the next observation.

\begin{lemma}\label{lem:identity}
Let $X_1,\ldots ,X_s$ be independent random vectors, uniformly distributed on $C_1,\ldots ,C_s$ respectively. Given ${\bf t}=(t_1\ldots ,t_s)\in {\mathbb R}^s$,
we write $\nu_{{\bf t}}$ for the distribution of the random vector $t_1X_1+\cdots +t_sX_s$. Then,
\begin{equation*}\|{\bf t}\|_{{\cal C},K}=\int_{{\mathbb R}^n}\|x\|_Kd\nu_{{\bf t}}(x).\end{equation*}
\end{lemma}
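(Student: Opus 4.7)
\medskip

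\noindent\textbf{Proof proposal.} This is essentially a change-of-variables / pushforward identity, so the plan is to unfold the definitions and apply Fubini. First I would observe that, since $\vol_n(C_j)=1$ for each $j$, the product measure $\mathbf{1}_{C_1}(x_1)\cdots\mathbf{1}_{C_s}(x_s)\,dx_1\cdots dx_s$ on $\mathbb{R}^{ns}$ is precisely the joint law of the independent random vectors $X_1,\ldots,X_s$ with $X_j$ uniform on $C_j$. Therefore
\begin{equation*}
\|{\bf t}\|_{{\cal C},K}
=\int_{C_1}\!\cdots\!\int_{C_s}\Big\|\sum_{j=1}^s t_jx_j\Big\|_K\,dx_1\cdots dx_s
={\mathbb E}\,\Big\|\sum_{j=1}^s t_jX_j\Big\|_K
={\mathbb E}\,\varphi(Y),
\end{equation*}
where $Y:=t_1X_1+\cdots+t_sX_s$ and $\varphi(x):=\|x\|_K$ is a Borel measurable function on $\mathbb{R}^n$ (continuous, since $K$ has $0$ in its interior).

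Next I would invoke the pushforward/change-of-variables formula: if $Y$ is an $\mathbb{R}^n$-valued random vector with distribution $\nu_{\bf t}$ (so $\nu_{\bf t}(B)=\mathbb{P}(Y\in B)$ for every Borel $B\subseteq\mathbb{R}^n$) and $\varphi$ is a nonnegative Borel function, then ${\mathbb E}\,\varphi(Y)=\int_{\mathbb{R}^n}\varphi(x)\,d\nu_{\bf t}(x)$. Applying this with $\varphi(x)=\|x\|_K$ gives
\begin{equation*}
{\mathbb E}\,\Big\|\sum_{j=1}^s t_jX_j\Big\|_K=\int_{\mathbb{R}^n}\|x\|_K\,d\nu_{\bf t}(x),
\end{equation*}
which combined with the previous display yields the claimed identity.

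There is essentially no obstacle here; the lemma is a tautological consequence of the definition of $\nu_{\bf t}$ as the law of $\sum_j t_jX_j$ together with the normalisation $\vol_n(C_j)=1$. The only mild point worth mentioning is the integrability of $\|\cdot\|_K$ against $\nu_{\bf t}$, which is automatic because each $C_j$ is bounded, hence the support of $\nu_{\bf t}$ (contained in $\sum_j t_jC_j$) is compact and $\|\cdot\|_K$ is continuous there. After this the identity will serve, together with the elementary fact $\vol_n(\|{\bf t}\|_2^{-1}\sum_j t_jC_j)=1$ (a consequence of the Brunn--Minkowski inequality and the equality case arguments used to bring $\nu_{\bf t}$ to an isotropic log-concave measure in \eqref{eq:basic-identity}), as the starting point for the short proof of Theorem~\ref{th:GM-short}.
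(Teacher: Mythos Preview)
Your proof of the lemma itself is correct and matches the paper, which in fact states Lemma~\ref{lem:identity} without proof precisely because it is the tautological pushforward identity you describe.

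However, the final paragraph of your proposal contains a genuine error that is worth flagging, even though it lies outside the proof of the lemma proper. The ``elementary fact'' $\vol_n\bigl(\|{\bf t}\|_2^{-1}\sum_j t_jC_j\bigr)=1$ is false in general: already when $C_1=\cdots=C_s=C$ and $t_j\gr 0$ one has $\sum_j t_jC_j=\|{\bf t}\|_1\,C$, so the rescaled set has volume $(\|{\bf t}\|_1/\|{\bf t}\|_2)^n$, which equals $1$ only if ${\bf t}$ has a single nonzero coordinate. Brunn--Minkowski gives only the lower bound $\vol_n\bigl(\sum_j t_jC_j\bigr)^{1/n}\gr\sum_j|t_j|$, not an equality with $\|{\bf t}\|_2$. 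More to the point, the paper's route to Theorem~\ref{th:GM-short} does not pass through any such volume identity: it combines Lemma~\ref{lem:identity} with the entropy bound $\|g_{\bf t}\|_\infty\ls e^n$ of Lemma~\ref{lem:bound-gt} (via Bobkov--Madiman and Shannon--Stam) and then applies Lemma~\ref{lem:BMMP}. So your forward-looking remark both misstates a fact and misidentifies the mechanism; you should drop it.
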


Since $\|{\bf t}\|_{{\cal C},K}$ is a norm, we may always assume that $\|{\bf t}\|_2=1$.
Note that $\nu_{{\bf t}}$ is an even log-concave probability measure on ${\mathbb R}^n$
(this is a consequence of the Pr\'{e}kopa-Leindler inequality; see \cite{AGA-book}). We write $g_{{\bf t}}$ for the density of $\nu_{{\bf t}}$.
The next lemma provides an upper bound for $\|g_{{\bf t}}\|_{\infty }=g_{{\bf t}}(0)$.

\begin{lemma}\label{lem:bound-gt}If $\|{\bf t}\|_2=1$ then $\|g_{{\bf t}}\|_{\infty }\ls e^n$.\end{lemma}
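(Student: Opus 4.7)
My plan is to combine a sharp upper bound for $\|f\|_\infty$ in terms of the differential entropy of a log-concave density $f$ on $\mathbb{R}^n$ with a lower bound for $h(g_{\mathbf{t}})$ supplied by the Shannon--Stam entropy power inequality.

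I would first observe that $g_{\mathbf{t}}$ is even (by the central symmetry of each $C_j$) and log-concave (by Pr\'ekopa--Leindler, since it is the push-forward of the uniform density on $C_1\times\cdots\times C_s$ under the linear map $(x_1,\ldots,x_s)\mapsto \sum_j t_j x_j$); hence $\|g_{\mathbf{t}}\|_\infty = g_{\mathbf{t}}(0) =: M$. Writing $g_{\mathbf{t}}(x) = M e^{-\phi(x)}$ with $\phi$ convex, nonnegative and $\phi(0) = 0$, the differential entropy rewrites as
\[
h(g_{\mathbf{t}}) = -\log M + \int_{\mathbb{R}^n} g_{\mathbf{t}}(x)\,\phi(x)\,dx,
\]
so it suffices to establish two inequalities: $\int g_{\mathbf{t}}\,\phi\,dx \le n$ and $h(g_{\mathbf{t}}) \ge 0$; together they give $\log M \le n - h(g_{\mathbf{t}}) \le n$.

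For the first inequality I would introduce $F(\lambda) := \int_{\mathbb{R}^n} e^{-\lambda\phi(x)}\,dx$. Convexity of $\phi$ with $\phi(0) = \min\phi$ gives $\phi(y/\lambda) \le \phi(y)/\lambda$ for every $\lambda \ge 1$ (apply convexity to $y/\lambda = (1/\lambda)y + (1-1/\lambda)\cdot 0$); the change of variables $x = y/\lambda$ then yields $\lambda^n F(\lambda) \ge F(1) = 1/M$ for all $\lambda \ge 1$, with equality at $\lambda = 1$. Differentiating this inequality at $\lambda = 1$ (where the function $\lambda \mapsto \lambda^n F(\lambda)$ attains its value $F(1)$ from below on the left and remains above it on the right) gives $n F(1) + F'(1) \ge 0$, which rearranges to $\int \phi\, e^{-\phi}\,dx \le n/M$, and therefore $\int g_{\mathbf{t}}\,\phi\,dx = M\int \phi\, e^{-\phi}\,dx \le n$.

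For the lower bound on $h(g_{\mathbf{t}})$, I would apply the Shannon--Stam entropy power inequality iteratively to the independent summands $t_j X_j$ in $\mathbb{R}^n$:
\[
e^{2 h(g_{\mathbf{t}})/n} \;\ge\; \sum_{j=1}^{s} e^{2 h(t_j X_j)/n} \;=\; \sum_{j=1}^{s} t_j^2\, e^{2 h(X_j)/n} \;=\; \sum_{j=1}^{s} t_j^2 \;=\; 1,
\]
using the scaling identity $h(t X) = h(X) + n\log|t|$ and the fact that $X_j$ is uniform on $C_j$ with $\mathrm{vol}_n(C_j) = 1$, so $h(X_j) = 0$. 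Combining yields $M \le e^{n - h(g_{\mathbf{t}})} \le e^n$, as required. The main delicate point is the short scaling/differentiation argument for $\int g_{\mathbf{t}}\phi \le n$; the entropy power step is bookkeeping from Shannon--Stam and the scaling rule for differential entropy.
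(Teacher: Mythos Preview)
Your argument is correct and follows essentially the same strategy as the paper's proof. Both combine (i) the upper bound $h(g)\le n+\log(\|g\|_\infty^{-1})$ valid for any log-concave density $g$, and (ii) the Shannon--Stam inequality together with $h(X_j)=0$ for the uniform density on $C_j$. The paper cites (i) as the Bobkov--Madiman inequality, whereas you supply a direct scaling/differentiation proof of it; and the paper uses the Lieb form $h\big(\sum_j t_jX_j\big)\ge\sum_j t_j^2\,h(X_j)$ of Shannon--Stam, whereas you use the equivalent entropy-power form and the scaling rule $h(tX)=h(X)+n\log|t|$. These are cosmetic differences; the underlying mechanism is the same.
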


\begin{proof}The proof employs a result of Bobkov and Madiman from \cite{Bobkov-Madiman-2011} and the Shannon-Stam inequality (see \cite{Stam-1959}).
Recall that the entropy functional of a random vector $X$ in ${\mathbb R}^n$ with density $g(x)$ is defined by
\begin{equation*}h(X)=-\int_{{\mathbb R}^n}g(x)\log g(x)\,dx\end{equation*}
provided the integral exists. Bobkov and Madiman have shown that if $g$ is log-concave then
\begin{equation*}\log (\|g\|_{\infty }^{-1})\ls h(X)\ls n+ \log (\|g\|_{\infty }^{-1})\end{equation*}
(the assumption that $g$ is log-concave is needed only for the right hand side inequality). Let ${\bf t}\in {\mathbb R}^s$ with
$\|{\bf t}\|_2=1$ and $t_1,\ldots ,t_s\gr 0$. Then, if $X_1,\ldots ,X_s$ are independent random vectors with densities $g_1,\ldots ,g_s$ we have that
\begin{equation*}h(t_1X_1+\cdots +t_sX_s)\gr \sum_{j=1}^st_j^2h(X_j).\end{equation*}
This is an equivalent form of the Shannon-Stam inequality (see \cite{Lieb-1978} and \cite{Dembo-Cover-Thomas-1991}).
Since the density $g_{{\bf t}}$ of $t_1X_1+\cdots +t_sX_s$ is also log-concave, we may write
\begin{equation*}\sum_{j=1}^st_j^2\log (\|g_j\|_{\infty }^{-1})\ls\sum_{j=1}^st_j^2h(X_j)\ls h(t_1X_1+\cdots +t_sX_s)\ls n+\log (\|g_{{\bf t}}\|_{\infty }^{-1}),\end{equation*}
which implies that
\begin{equation*}\|g_{{\bf t}}\|_{\infty }\ls e^n\prod_{j=1}^s\|g_j\|_{\infty }^{t_j^2}.\end{equation*}
In our case, $g_j={\bf 1}_{C_j}$, therefore $\|g_j\|_{\infty }=1$ and the lemma follows.
\end{proof}

The next lemma is an immediate consequence of \cite[Lemma~2.3]{BMMP} (see also \cite[Lemma~2.1]{VMilman-Pajor-1989}).

\begin{lemma}\label{lem:BMMP}Let $f$ be a bounded positive density of a probability measure $\mu $ on ${\mathbb R}^n$. For any centrally symmetric
convex body $K$ in ${\mathbb R}^n$ and any $p>0$ one has
\begin{equation*}\left (\frac{n}{n+p}\right )^{1/p}\ls\left (\int_{{\mathbb R}^n}\|x\|_K^pf(x)\,dx\right )^{1/p}\|f\|_{\infty }^{1/n}{\rm vol}_n(K)^{1/n}.\end{equation*}
\end{lemma}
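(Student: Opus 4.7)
The plan is to use a classical ``bathtub'' (rearrangement) argument: among all probability densities bounded above by $M:=\|f\|_{\infty}$, the moment $\int\|x\|_K^p\,d\mu$ is minimized by the uniform density on the smallest possible dilate of $K$, for which the integral can be evaluated in closed form.

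First I would fix $r>0$ by requiring $M\vol_n(rK)=1$, i.e.\ $r=(M\vol_n(K))^{-1/n}$, so that $g:=M\mathbf{1}_{rK}$ is itself a probability density with $\|g\|_{\infty}=M=\|f\|_{\infty}$. The candidate minimizer and the comparison density are then both honest probability densities.

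Next I would establish the comparison
\[ \int_{\mathbb R^n}\|x\|_K^p f(x)\,dx \;\gr\;\int_{\mathbb R^n}\|x\|_K^p g(x)\,dx. \]
This is immediate from the pointwise inequality
\[ (\|x\|_K^p-r^p)\bigl(f(x)-M\mathbf{1}_{rK}(x)\bigr)\gr 0, \]
which holds because the two factors have matching signs everywhere: on $rK$ both are $\ls 0$ (since $\|x\|_K\ls r$ and $f\ls M$), and on its complement both are $\gr 0$ (since $\|x\|_K\gr r$ and $f\gr 0=M\mathbf{1}_{rK}$). Expanding, integrating, and using $\int f=\int g=1$ kills the $r^p$-term and yields the claim.

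Finally, the moment of $g$ can be evaluated explicitly by polar integration in the $K$-gauge: from $\vol_n(tK)=t^n\vol_n(K)$,
\[ \int\|x\|_K^p g(x)\,dx = M\int_0^r t^p\cdot n\vol_n(K)\,t^{n-1}\,dt = \frac{nM\vol_n(K)}{n+p}\,r^{n+p}=\frac{n}{n+p}(M\vol_n(K))^{-p/n}, \]
where the last equality uses $r^n=(M\vol_n(K))^{-1}$. Taking $p$-th roots and multiplying through by $M^{1/n}\vol_n(K)^{1/n}$ gives exactly the asserted inequality. There is no serious obstacle in this plan; the only step requiring a line of care is the sign-matching in the bathtub estimate, and that is automatic from the choice of $r$ together with the identity $\{x:\|x\|_K\ls r\}=rK$.
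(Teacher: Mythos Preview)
Your proof is correct. The paper does not supply its own proof of this lemma but merely cites \cite{BMMP} and \cite{VMilman-Pajor-1989}; the bathtub (rearrangement) argument you give is precisely the classical proof underlying those references, so your approach coincides with the intended one.
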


We apply Lemma~\ref{lem:BMMP} for the log-concave probability measure $\nu_{{\bf t}}$. For any ${\bf t}\in {\mathbb R}^s$
with $\|{\bf t}\|_2=1$ we have $\|g_{{\bf t}}\|_{\infty }=g_{{\bf t}}(0)\ls e^n$, therefore
\begin{equation*}\frac{n}{n+1}\ls e\,{\rm vol}_n(K)^{1/n}\int_{{\mathbb R}^n}\|x\|_K\,d\nu_{{\bf t}}(x).\end{equation*}

Combining this inequality with Lemma \ref{lem:identity} we see that
if ${\cal C}=(C_1,\ldots ,C_s)$ is an $s$-tuple of centrally symmetric convex bodies of volume $1$ and $K$ is a centrally symmetric convex body in $\mathbb{R}^n$
then, for any $s\gr 1$ and any ${\bf t}=(t_1,\ldots,t_s)\in \mathbb{R}^s$
\begin{equation*}\|{\bf t}\|_{{\cal C},K}\gr \frac{n}{e(n+1)}{\rm vol}_n(K)^{-1/n}\, \|{\bf t}\|_2.\end{equation*}
This proves Theorem~\ref{th:GM-short}.

\section{Upper bounds}

In this section we assume that $C$ is an isotropic convex body in ${\mathbb R}^n$. We shall further exploit the identity of Lemma \ref{lem:identity} to
give upper estimates for $\|{\bf t}\|_{C^s,K}$, where $K$ is a centrally symmetric convex body in ${\mathbb R}^n$.

As in the previous section, let $X_1,\ldots ,X_s$ be independent random vectors, uniformly distributed on $C$.
Given ${\bf t}=(t_1\ldots ,t_s)\in {\mathbb R}^s$ with $\|{\bf t}\|_2=1$,
we write $\nu_{{\bf t}}$ for the distribution of the random vector $t_1X_1+\cdots +t_sX_s$. It is then easily verified that the
covariance matrix ${\rm Cov}(\nu_{{\bf t}})$ of $\nu_{{\bf t}}$ is a multiple of the identity: more precisely,
\begin{equation*}{\rm Cov}(\nu_{{\bf t}})=L_C^2\,I_n.\end{equation*}
It follows that if $g_{{\bf t}}$ is the density of $\nu_{{\bf t}} $ then $f_{{\bf t}}(x)=L_C^ng_{{\bf t}}(L_Cx)$ is the density
of an isotropic log-concave probability measure on ${\mathbb R}^n$. Indeed, we have
\begin{equation*}\int_{{\mathbb R}^n}f_{{\bf t}}(x)x_ix_j\,dx=L_C^n\int_{{\mathbb R}^n}g_{{\bf t}}(L_Cx)x_ix_j\,dx
=L_C^{-2}\int_{{\mathbb R}^n}g_{{\bf t}}(y)y_iy_j\,dy=\delta_{ij}\end{equation*}
for all $1\ls i,j\ls n$. From Lemma~\ref{lem:bound-gt} we see that
\begin{equation*}L_{\mu_{{\bf t}}}=\|f_{{\bf t}}\|_{\infty }^{\frac{1}{n}}=L_C\|g_{{\bf t}}\|_{\infty }^{\frac{1}{n}}\ls eL_C\end{equation*}
for all ${\bf t}\in {\mathbb R}^s$ with $\|{\bf t}\|_2=1$. We also have
\begin{equation*}\|{\bf t}\|_{C^s,K}=\int_{{\mathbb R}^n}\|x\|_K\,d\nu_{{\bf t}}(x)=L_C^{-n}\int_{{\mathbb R}^n}\|x\|_Kf_{{\bf t}}(x/L_C)\,dx
= L_C\int_{{\mathbb R}^n}\|y\|_Kd\mu_{{\bf t}}(y).\end{equation*}

\begin{definition}\label{def:I-one}\rm Let $\mu $ be a centered log-concave probability measure on ${\mathbb R}^n$. For any star body $K$ in ${\mathbb R}^n$
we define
\begin{equation*}I_1(\mu ,K)=\int_{{\mathbb R}^n}\|x\|_Kd\mu (x).\end{equation*}
With this definition, we can write
\begin{equation}\label{eq:isotropic-identity}\|{\bf t}\|_{C^s,K}=L_C\, I_1(\mu_{{\bf t}},K)\end{equation}
for all ${\bf t}\in {\mathbb R}^s$ with $\|{\bf t}\|_2=1$. Then, our aim is to establish an upper bound for $I_1(\mu_{{\bf t}},K)$.
\end{definition}

\subsection{Simple upper and lower bounds}

A first upper bound for $I_1(\mu_{{\bf t}},K)$ can be obtained if we use the simple inequality $\|y\|_K\ls b\|y\|_2$, where $b=b(K)=R(K^{\circ })$.
We observe that
\begin{equation*}
I_1(\mu_{{\bf t}},K) \ls b\int_{{\mathbb R}^n}\|y\|_2d\mu_{{\bf t}}(y)\ls b\sqrt{n}.
\end{equation*}
because the last integral is bounded by $\sqrt{n}$: this follows immediately from the Cauchy-Schwarz inequality and the
isotropicity of $\mu_{{\bf t}}$. On the other hand,
\begin{align*}
I_1(\mu_{{\bf t}},K) &=\int_{{\mathbb R}^n}\max_{x\in K^{\circ }}|\langle x,y\rangle |\,d\mu_{{\bf t}}(y)
\gr \max_{x\in K^{\circ }}\int_{{\mathbb R}^n}|\langle x,y\rangle |\,d\mu_{{\bf t}}(y)
\gr \max_{x\in K^{\circ }}c_1\Big (\int_{{\mathbb R}^n}|\langle x,y\rangle |^2\,d\mu_{{\bf t}}(y)\Big )^{1/2}\\
&= c_1\, \max_{x\in K^{\circ }}\|x\|_2=c_1R(K^{\circ })=c_1b,
\end{align*}
where in the second inequality we are using \cite[Theorem~2.4.6]{BGVV}. Inserting these two bounds
into \eqref{eq:isotropic-identity} we have the next theorem.

\begin{theorem}\label{theorem:simple-ell2-bound}
Let $C$ be an isotropic convex body in $\mathbb{R}^n$ and $K$ be a centrally symmetric convex body in $\mathbb{R}^n$. Then, for any $s\gr 1$
and ${\bf t}=(t_1,\ldots,t_s)\in \mathbb{R}^s$,
\begin{equation*}
c_1L_CR(K^{\circ })\,\|{\bf t}\|_2\ls \|{\bf t}\|_{C^s,K} \ls \sqrt{n}L_CR(K^{\circ })\,\|{\bf t}\|_2,
\end{equation*}
where $c_1>0$ is an absolute constant.
\end{theorem}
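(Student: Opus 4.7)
The plan is to extract the theorem directly from identity \eqref{eq:isotropic-identity}, which after rescaling by $\|{\bf t}\|_2$ (by homogeneity of $\|\cdot\|_{C^s,K}$) reduces the task to sandwich bounds on the isotropic quantity $I_1(\mu_{{\bf t}},K)$ that depend only on $R(K^{\circ})$ and the dimension. I would first reduce to the case $\|{\bf t}\|_2=1$ and recall that $\mu_{{\bf t}}$ is isotropic, so in particular $\int_{{\mathbb R}^n}\|y\|_2^2\,d\mu_{{\bf t}}(y)=n$.

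For the upper bound, I would simply use the pointwise inequality $\|y\|_K\ls R(K^{\circ })\|y\|_2$, which holds because $R(K^{\circ })=b(K)=\sup_{y\neq 0}\|y\|_K/\|y\|_2$. Integrating against $\mu_{{\bf t}}$ and applying Cauchy--Schwarz together with the isotropicity identity yields
\begin{equation*}
I_1(\mu_{{\bf t}},K)\ls R(K^{\circ })\int_{{\mathbb R}^n}\|y\|_2\,d\mu_{{\bf t}}(y)\ls R(K^{\circ })\Big(\int_{{\mathbb R}^n}\|y\|_2^2\,d\mu_{{\bf t}}(y)\Big)^{1/2}=\sqrt{n}\,R(K^{\circ }),
\end{equation*}
and then identity \eqref{eq:isotropic-identity} produces the stated upper bound.

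For the lower bound, I would invoke the duality $\|y\|_K=\max_{x\in K^{\circ }}\langle x,y\rangle$, interchange the maximum with the integral to get the trivial estimate $I_1(\mu_{{\bf t}},K)\gr\max_{x\in K^{\circ }}\int|\langle x,y\rangle|\,d\mu_{{\bf t}}(y)$, and then apply the $L_2$--$L_1$ reverse H\"older inequality for log-concave measures (cited in the excerpt as \cite[Theorem~2.4.6]{BGVV}) to each linear functional $y\mapsto\langle x,y\rangle$. Using isotropicity of $\mu_{{\bf t}}$ once more, each such $L_2$-norm equals $\|x\|_2$, so taking the supremum over $x\in K^{\circ }$ yields $I_1(\mu_{{\bf t}},K)\gr c_1 R(K^{\circ })$ with an absolute constant $c_1>0$, and the lower bound follows.

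There is essentially no obstacle here: all the machinery (the identity \eqref{eq:isotropic-identity}, the isotropicity of $\mu_{{\bf t}}$, and the $L_2$--$L_1$ comparison) is already in place in the excerpt, and the argument is just packaging. The only subtle point worth verifying is that the exchange of supremum and integral in the lower bound is done in the correct direction (using $\gr$), which is immediate since $\|y\|_K=\max_{x\in K^{\circ }}\langle x,y\rangle\gr \langle x,y\rangle$ for each fixed $x\in K^{\circ}$.
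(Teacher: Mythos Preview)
Your proposal is correct and follows exactly the paper's own argument: the upper bound via the pointwise inequality $\|y\|_K\ls R(K^{\circ})\|y\|_2$ together with Cauchy--Schwarz and isotropicity, and the lower bound via the duality $\|y\|_K=\max_{x\in K^{\circ}}|\langle x,y\rangle|$, exchanging max and integral, and applying the $L_1$--$L_2$ comparison from \cite[Theorem~2.4.6]{BGVV}. There is nothing to add.
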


There are some classes of centrally symmetric convex bodies that behave well with respect to the upper bound
of Theorem~\ref{theorem:simple-ell2-bound}. We discuss one of them in the next subsection.

\subsection{$2$-convex bodies}

Recall that if $K$ is a centrally symmetric convex body in $\mathbb{R}^n$ then the modulus of convexity of $K$ is the function
$\delta_K:(0,2]\to {\mathbb R}$ defined by
\begin{equation*}
\delta_K(\varepsilon)=\inf\Big\{1-\Big\|\frac{x+y}{2}\Big\|_K\,:\, \|x\|_K, \|y\|_K\ls 1, \|x-y\|_K\gr\varepsilon\Big\}.
\end{equation*}
Then, $K$ is called $2$-convex with constant $\alpha$ if, for every $\varepsilon\in (0,2]$,
\begin{equation*}
\delta_K(\varepsilon)\gr \alpha\varepsilon^2.
\end{equation*}
Examples of $2$-convex bodies are given by the unit balls of subspaces of $L_p$-spaces, $1<p\ls 2$; one can check that
the definition is satisfied with $\alpha\approx p-1$. Klartag and E.~Milman have proved in
\cite{Klartag-EMilman-2008} that if $K$ is a centrally symmetric convex body of volume $1$ in ${\mathbb R}^n$,
which is also $2$-convex with constant $\alpha$, then
\begin{equation*}
L_K\ls c_1/\sqrt{\alpha},
\end{equation*}
where $c_1>0$ is an absolute constant. Moreover, if $K$ is isotropic then
\begin{equation*}
c_2\sqrt{\alpha}\sqrt{n}B_2^n\subseteq K,
\end{equation*}
for an absolute constant $c_2>0$ (see, again, \cite{Klartag-EMilman-2008}). From Theorem~\ref{theorem:simple-ell2-bound} we immediately get the next estimate.

\begin{theorem}\label{thm.2.convex}
Let $C$ be an isotropic convex body in $\mathbb{R}^n$ and $K$ be an isotropic centrally symmetric convex body in ${\mathbb R}^n$ which is also
$2$-convex with constant $\alpha$. Then for any $s\gr 1$ and ${\bf t}=(t_1,\ldots,t_s)\in \mathbb{R}^s$,
\begin{equation*}
\|{\bf t}\|_{C^s,K} \ls \frac{cL_C}{\sqrt{\alpha }}\|{\bf t}\|_2
\end{equation*}
where $c>0$ is an absolute constant. In particular, for any centrally symmetric convex body $K$ in ${\mathbb R}^n$ which is $2$-convex
with constant $\alpha $, we have that
\begin{equation*}
\|{\bf t}\|_{K^s,K} \ls \frac{c}{\alpha }\|{\bf t}\|_2.
\end{equation*}
\end{theorem}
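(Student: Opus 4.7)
The plan is a short reduction to Theorem~\ref{theorem:simple-ell2-bound} via the two structural facts about $2$-convex bodies from Klartag-Milman recalled just above the statement. First I would treat the case where $K$ is isotropic: from the inclusion $c_2\sqrt{\alpha }\sqrt{n}\,B_2^n\subseteq K$, passing to polars gives $K^{\circ }\subseteq (c_2\sqrt{\alpha n})^{-1}B_2^n$, so
\begin{equation*}R(K^{\circ })\ls \frac{1}{c_2\sqrt{\alpha n}}.\end{equation*}
Plugging this into the upper bound of Theorem~\ref{theorem:simple-ell2-bound} yields
\begin{equation*}\|{\bf t}\|_{C^s,K}\ls \sqrt{n}\,L_C\,R(K^{\circ })\,\|{\bf t}\|_2\ls \frac{L_C}{c_2\sqrt{\alpha }}\,\|{\bf t}\|_2,\end{equation*}
which is the first inequality of the theorem.

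For the second inequality, let $K$ be any centrally symmetric $2$-convex body in ${\mathbb R}^n$ with constant $\alpha $. Because the modulus of convexity $\delta_K$ depends only on the norm $\|\cdot \|_K$, the property of being $2$-convex (with the same constant) is preserved under positive dilations and under $GL(n)$-transformations. The quantity $\|{\bf t}\|_{K^s,K}$ is in turn invariant under scaling of $K$ and, as noted at the start of Section~1, satisfies $\|{\bf t}\|_{K^s,K}=\|{\bf t}\|_{(TK)^s,TK}$ for every $T\in SL(n)$. After a volume normalization and a suitable $SL(n)$ change of coordinates we may therefore assume that $K$ is isotropic and $2$-convex with constant $\alpha $. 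The Klartag-Milman estimate then gives $L_K\ls c_1/\sqrt{\alpha }$, and applying the first inequality with $C=K$ gives
\begin{equation*}\|{\bf t}\|_{K^s,K}\ls \frac{L_K}{c_2\sqrt{\alpha }}\,\|{\bf t}\|_2\ls \frac{c}{\alpha }\,\|{\bf t}\|_2,\end{equation*}
which is the second claim.

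I do not anticipate a substantive obstacle: the paper already advertises this as an \emph{immediate} consequence of Theorem~\ref{theorem:simple-ell2-bound}, and the derivation is a one-line substitution once the two Klartag-Milman inputs are in hand. The only point requiring a little care is the reduction to isotropic position in the second statement, where one has to verify that the simultaneous $SL(n)$-invariance of $\|\cdot \|_{K^s,K}$ and the invariance of the $2$-convexity constant under $GL(n)$ can be combined so as to inherit both $L_K\lesssim 1/\sqrt{\alpha }$ and $R(K_{\rm iso}^{\circ })\lesssim 1/\sqrt{\alpha n}$ at the same time.
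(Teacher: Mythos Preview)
Your proposal is correct and follows exactly the paper's approach: bound $R(K^{\circ})$ via the Klartag--Milman inclusion $c_2\sqrt{\alpha n}\,B_2^n\subseteq K$ and plug into Theorem~\ref{theorem:simple-ell2-bound}, then use $SL(n)$-invariance of $\|{\bf t}\|_{K^s,K}$ to reduce to isotropic position and apply $L_K\ls c_1/\sqrt{\alpha}$. Your added remark that $2$-convexity (being a property of the norm) is preserved under linear maps is a welcome explicit justification that the paper leaves implicit.
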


\begin{proof}The first claim follows from the fact that $R(K^{\circ })\ls c_2^{-1}/(\sqrt{\alpha}\sqrt{n})$. For the second assertion
we use the observation that $\mathbb{E}_{K^s}\left\|\sum_{j=1}^s t_j x_j \right\|_K=\mathbb{E}_{(TK)^s}\left\|\sum_{j=1}^s t_j x_j \right\|_{TK}$
for any $T\in SL(n)$, and hence we may assume that $K$ is isotropic. Since $L_K\ls c_1/\sqrt{\alpha }$ we see that
\begin{equation*}
\mathbb{E}_{K^s}\Big\|\sum_{j=1}^s t_j x_j \Big\|_K \ls \frac{c_2^{-1}L_K}{\sqrt{\alpha }}\|{\bf t}\|_2\ls \frac{c_3}{\alpha }\|{\bf t}\|_2,
\end{equation*}
where $c_3=c_2^{-1}c_1$.\end{proof}

\subsection{A general upper bound}

In this subsection we prove Theorem~\ref{th:general}. By homogeneity it is enough to consider the case $\|{\bf t}\|_2=1$.
Our starting point will be again \eqref{eq:isotropic-identity}. We have
\begin{equation*}\|{\bf t}\|_{C^s,K}=L_C\, I_1(\mu_{{\bf t}},K),\end{equation*}
and hence our aim is to establish an upper bound for $I_1(\mu_{{\bf t}},K)$. We shall use a well-known inequality of Paouris from
\cite{Paouris-1}.

\begin{theorem}[Paouris]\label{th:paouris}
If $\mu $ is an isotropic log-concave probability measure on $\mathbb{R}^n$, then
\begin{equation*}\mu (\{x\in {\mathbb R}^n : \|x\|_2\gr c_1\, r\sqrt{n}\}) \ls e^{-r\sqrt{n}}\end{equation*}
for every $r\gr 1$, where $c_1>0$ is an absolute constant.
\end{theorem}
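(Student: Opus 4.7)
The plan is to reduce the tail estimate to an $L^q$ moment bound on the Euclidean norm. Set $I_q(\mu) := \bigl(\int \|x\|_2^q\,d\mu(x)\bigr)^{1/q}$; by Markov's inequality, $\mu(\{x\in{\mathbb R}^n:\|x\|_2\gr e\,I_q(\mu)\})\ls e^{-q}$ for every $q\gr 1$. So it suffices to prove the moment estimate
\begin{equation*}I_q(\mu)\ls c\,\max\{\sqrt{n},\,q\}\qquad\text{for all}\ q\gr 1,\end{equation*}
with an absolute constant $c>0$; specializing to $q=r\sqrt{n}$ with $r\gr 1$ then yields precisely the claimed tail bound.

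To access $I_q(\mu)$, I would introduce the $L_q$-centroid bodies $Z_q(\mu)$, defined via the support function $h_{Z_q(\mu)}(\theta)=\|\langle\cdot,\theta\rangle\|_{L_q(\mu)}$. For isotropic $\mu$ we have $Z_2(\mu)=B_2^n$, and $Z_q(\mu)\supseteq c_0\,B_2^n$ for every $q\gr 2$ by the reverse H\"older inequality for log-concave densities (Borell's lemma). Integration in polar coordinates expresses $I_q(\mu)^q$ as a power average over $S^{n-1}$ of $h_{Z_q(\mu)}$, so bounding $I_q(\mu)$ amounts to controlling a spherical average of the support function of $Z_q(\mu)$.

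The core step is a ``step-up'' inequality: for an isotropic log-concave measure $\mu$ and for $1\ls q\ls\sqrt{n}$, one has $I_{2q}(\mu)\ls C\,I_q(\mu)$. Iterating from the base case $I_2(\mu)=\sqrt{n}$ then gives $I_q(\mu)\ls C\sqrt{n}$ throughout $1\ls q\ls\sqrt{n}$. For the complementary range $q\gr\sqrt{n}$, the bound $I_q(\mu)\ls C\,q$ follows from Borell's lemma together with the fact that every seminorm on ${\mathbb R}^n$ is $\psi_1$ with respect to $\mu$ with an absolute constant (which is, for example, Lemma~\ref{lem:higher-moments} applied to the Euclidean norm).

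The principal obstacle is the step-up inequality $I_{2q}(\mu)\ls C\,I_q(\mu)$. It requires a simultaneous analysis of the inradius, circumradius and volume of the centroid bodies $Z_q(\mu)$ via Brunn--Minkowski and reverse H\"older estimates for log-concave measures. This is the deepest ingredient of Paouris' original argument and does not follow from generic concentration techniques; everything else in the proof is essentially bookkeeping around it.
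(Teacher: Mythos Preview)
The paper does not prove this theorem at all; it is quoted from \cite{Paouris-1} as a black box, to be applied in the proof of Theorem~\ref{th:general}. There is therefore no ``paper's own proof'' to compare against. What you have written is a reasonable high-level sketch of Paouris' original argument: reduce the tail bound to the moment estimate $I_q(\mu)\ls c\max\{\sqrt{n},q\}$ via Markov's inequality, and establish the moment estimate through the $L_q$-centroid bodies, with the doubling inequality $I_{2q}(\mu)\ls C\,I_q(\mu)$ for $q\ls c\sqrt{n}$ as the deep step. You are also right that this step is the non-trivial content and that everything else is bookkeeping.

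One technical correction: your sentence ``integration in polar coordinates expresses $I_q(\mu)^q$ as a power average over $S^{n-1}$ of $h_{Z_q(\mu)}$'' is not accurate. The relation between $I_q(\mu)$ and $Z_q(\mu)$ comes from Fubini, not polar coordinates: writing $h_{Z_q(\mu)}(\theta)^q=\int_{{\mathbb R}^n}|\langle x,\theta\rangle|^q\,d\mu(x)$ and integrating over $\theta\in S^{n-1}$ gives
\begin{equation*}
\int_{S^{n-1}}h_{Z_q(\mu)}(\theta)^q\,d\sigma(\theta)=\int_{{\mathbb R}^n}\|x\|_2^q\Big(\int_{S^{n-1}}|\theta_1|^q\,d\sigma(\theta)\Big)\,d\mu(x),
\end{equation*}
so $I_q(\mu)$ is (up to the one-dimensional Gaussian-type factor) the $L_q(\sigma)$-average of the support function of $Z_q(\mu)$, equivalently $\sqrt{n/q}$ times $M_q$ of the gauge of $Z_q(\mu)^{\circ}$. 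This does not affect the strategy you outline, but the mechanism is worth stating correctly.
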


Note also that, since $R(C)\ls c_2nL_C$ and ${\rm supp}(\nu_{{\bf t}})\subseteq sC$, we have that
\begin{equation*}{\rm supp}(\mu_{{\bf t}})\subseteq \frac{s}{L_C}\,C\subseteq (c_2ns)\,B_2^n\end{equation*}
for any ${\bf t}=(t_1,\ldots,t_s)\in \mathbb{R}^s$ with $\|{\bf t}\|_2=1$. Therefore, if we fix $r\gr 1$ and set
$C_{{\bf t}}(r)={\rm supp}(\mu_{{\bf t}})\cap c_1r\sqrt{n}B_2^n$, we may write
\begin{align*}\int_{{\mathbb R}^n}\|x\|_K\,d\mu_{{\bf t}}(x) &= \int_{C_{{\bf t}}(r)}\|x\|_K\,d\mu_{{\bf t}}(x)+
\int_{{\rm supp}(\mu_{{\bf t}})\setminus C_{{\bf t}}(r)}\|x\|_K\,d\mu_{{\bf t}}(x)\\
&\ls \int_{C_{{\bf t}}(r)}\|x\|_K\,d\mu_{{\bf t}}(x)+b(K)\int_{{\rm supp}(\mu_{{\bf t}})\setminus C_{{\bf t}}(r)}\|x\|_2d\mu_{{\bf t}}(x)\\
&\ls \int_{C_{{\bf t}}(r)}\|x\|_K\,d\mu_{{\bf t}}(x)+b(K)\,(c_2ns) \,e^{-r\sqrt{n}}.
\end{align*}
Turning our attention to the first term, we consider the log-concave probability measure $\mu_{{\bf t},r}$ with density
\begin{equation*}\frac{1}{\mu_{{\bf t}}(C_{{\bf t}}(r))}\,{\mathbf 1}_{C_{{\bf t}}(r)}f_{{\bf t}}\end{equation*}
and the stochastic process $(w_y)_{y\in K^\circ}$ on $({\mathbb R}^n,\mu_{{\bf t},r})$, where $w_y(x)=\langle x,y \rangle$.
We also consider a standard Gaussian random vector $G$ in $\mathbb{R}^n$, and for $y\in K^\circ$ set $h_y(G) = \langle G,y \rangle$. Note that
(see e.g. \cite[Lemma~9.1.3]{AGA-book})
\begin{equation}\label{eq.bound.gaussian}
\mathbb{E}\,\Big(\max_{y\in K^\circ}h_y(G)\Big) = \mathbb{E}\,\|G\|_K \approx \sqrt{n}M(K).
\end{equation}
To bound $\mathbb{E}(\max_{y\in K^\circ}w_y)$, we will use Talagrand's comparison theorem (see \cite{Talagrand-1987}).

\begin{theorem}[Talagrand's comparison theorem]\label{thm.Talagrand}
If $(Y_t)_{t\in T}$ is a Gaussian process and $(X_t)_{t\in T}$ is a stochastic process such that
\begin{equation*}
\|X_s-X_t\|_{\psi_2} \ls \alpha \,\|Y_s-Y_t\|_2
\end{equation*}
for some $\alpha >0$ and every $s, t\in T$, then
\begin{equation*}
\mathbb{E}\,\Big (\max_{t\in T} X_t\Big) \ls c\alpha\, \mathbb{E} \Big(\max_{t\in T} Y_t\Big).
\end{equation*}
\end{theorem}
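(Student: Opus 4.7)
The plan is to prove Talagrand's comparison theorem via generic chaining, following Talagrand's original strategy. An admissible sequence on $T$ is a sequence of partitions $(A_n)_{n\gr 0}$ with $|A_0|=1$ and $|A_n|\ls 2^{2^n}$ for $n\gr 1$, and the $\gamma_2$-functional of a metric $d$ on $T$ is
$$\gamma_2(T,d)=\inf\,\sup_{t\in T}\sum_{n\gr 0}2^{n/2}\,\mathrm{diam}(A_n(t),d),$$
where $A_n(t)$ is the unique block of $A_n$ containing $t$ and the infimum is taken over all admissible sequences. The proof then splits into two classical estimates plus the trivial monotonicity $d\ls d'\Rightarrow \gamma_2(T,d)\ls\gamma_2(T,d')$.

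The first estimate is the upper chaining bound: any process whose increments satisfy $\|X_s-X_t\|_{\psi_2}=:d_X(s,t)$ obeys $\mathbb{E}\sup_{t\in T}X_t\ls C\gamma_2(T,d_X)$. I would deduce this by fixing a nearly optimal admissible sequence, selecting $\pi_n(t)\in A_n(t)$ with $\pi_0$ constant, telescoping $X_t-X_{\pi_0(t)}=\sum_{n\gr 1}(X_{\pi_n(t)}-X_{\pi_{n-1}(t)})$, applying a union bound over the at most $2^{2^{n+1}}$ pairs that can appear at level $n$, and summing the sub-Gaussian tails at the dyadic scale $u\cdot 2^{n/2}$; this is the standard generic chaining calculation.

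The second and much deeper estimate is Talagrand's majorizing measure theorem, which supplies the matching lower bound in the Gaussian case: for a Gaussian process $(Y_t)_{t\in T}$ with canonical metric $d_Y(s,t)=\|Y_s-Y_t\|_2$ one has $\mathbb{E}\sup_{t\in T}Y_t\gr c\,\gamma_2(T,d_Y)$. This is what I expect to be the main obstacle; it is proved by Talagrand's recursive partitioning scheme in which, at each scale, Sudakov minoration together with Gaussian concentration (Borell's inequality) allow one to isolate a well-separated subset contributing the required $2^{n/2}\,\mathrm{diam}$ term, and to recurse on the complement. Reproducing the inductive construction, with its careful bookkeeping of the cardinalities $2^{2^n}$, is the technically hard part; once in hand, no further geometry is needed.

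Granted the two inputs, the hypothesis $d_X\ls \alpha\,d_Y$ and monotonicity yield $\gamma_2(T,d_X)\ls \alpha\,\gamma_2(T,d_Y)$, whence
$$\mathbb{E}\sup_{t\in T}X_t \ls C\gamma_2(T,d_X)\ls C\alpha\,\gamma_2(T,d_Y)\ls (C/c)\,\alpha\,\mathbb{E}\sup_{t\in T}Y_t,$$
which is the theorem with $c=C/c_0$. An alternative path that sidesteps the majorizing measure theorem is a Fernique-type Gaussian comparison, embedding $(X_t)$ into a suitable Gaussian envelope and invoking Slepian's lemma; however, the generic chaining proof above is the cleanest and makes transparent the role of the $\psi_2$-to-$L^2$ comparison of metrics that is hypothesized.
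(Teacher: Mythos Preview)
The paper does not prove this theorem at all: it is stated as a black-box tool with a reference to Talagrand's 1987 paper, and then applied. So there is no ``paper's own proof'' to compare against.

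Your outline is the standard and correct route. The generic chaining upper bound for processes with sub-Gaussian increments together with Talagrand's majorizing measure lower bound for Gaussian processes is exactly how the comparison theorem is established, and your identification of the majorizing measure theorem as the substantial ingredient is accurate. One comment: what you have written is a plan, not a proof. The chaining upper bound is routine to fill in, but the majorizing measure theorem is a serious piece of work; if you intend to actually prove the statement rather than cite it, you should either reproduce Talagrand's partitioning argument in full (which is lengthy) or state clearly that you are invoking it as a known theorem. Also, your closing remark about a Fernique/Slepian alternative is misleading: Slepian's lemma compares two Gaussian processes and does not directly handle a general sub-Gaussian $(X_t)$, so that route does not in fact sidestep the majorizing measure machinery.
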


It is easily checked that $\|h_y-h_z\|_2=\|y-z\|_2$ for all $y,z\in K^{\circ }$. To bound the $\psi_2$ norm of $w_y-w_z$, we use the inequality $\|h\|_{\psi_2}\ls \sqrt{\|h\|_{\psi_1}\|h\|_\infty}$. Note that
\begin{equation*}\|w_y-w_z\|_{L^\infty(\mu_{{\bf t},r})} \ls R(C_{{\bf t}}(r))\|y-z\|_2\ls c_1r\sqrt{n}\|y-z\|_2\end{equation*}
and we also have
\begin{equation*}\|w_y-w_z\|_{L^{\psi_1}(\mu_{{\bf t},r})} \ls c_3\|w_y-w_z\|_{L^2(\mu_{{\bf t},r})} \ls 2c_3\|y-z\|_2\end{equation*}
for some absolute constant $c_3>0$ (here we also use the fact that $\mu (C_{{\bf t}}(r))\gr 1-e^{-r\sqrt{n}}\gr 1/2$). It follows that
\begin{equation*}
\|w_y-w_z\|_{L^{\psi_2}(\mu_{{\bf t},r})} \ls c_4\sqrt{r}\,\sqrt[4]{n}\,\|h_y-h_z\|_2.\end{equation*}
Theorem \ref{thm.Talagrand} then implies that
\begin{align*}
\int_{C_{{\bf t}}(r)}\|x\|_K\,d\mu_{{\bf t}}(x) &= \mu_{{\bf t}}(C_{{\bf t}}(r))\,{\mathbb E}_{\mu_{{\bf t},r}}\,\Big(\max_{y\in K^\circ}w_y\Big)
\ls c_5\sqrt{r}\,\sqrt[4]{n}\,{\mathbb E}\,\Big(\max_{y\in K^\circ}h_y\Big) \\
&\approx \sqrt{r}\sqrt[4]{n}\,\sqrt{n}M(K).
\end{align*}
Finally,
\begin{equation*}\int_{{\mathbb R}^n}\|x\|_K\,d\mu_{{\bf t}}(x) \ls c_1^{\prime }\Big (\sqrt{r}\sqrt[4]{n}\,\sqrt{n}M(K)
+b(K)\,ns\,e^{-r\sqrt{n}}\Big ).\end{equation*}
Since $b(K)\ls c_6\sqrt{n}M(K)$ we have that
\begin{equation*}b(K)\,ns \,e^{-r\sqrt{n}}\ls c_6nse^{-r\sqrt{n}}\,\sqrt{n}M(K)\ls \sqrt{r}\,\sqrt[4]{n}\,\sqrt{n}M(K)\end{equation*}
if we choose
\begin{equation*}r\approx\max\Big\{ 1,\frac{\log (1+s)}{\sqrt{n}}\Big\}.\end{equation*}
Therefore,
\begin{equation*}\|{\bf t}\|_{C^s,K}=L_C\, I_1(\mu_{{\bf t}},K)\ls
\Big (c_2^{\prime }L_C\max\Big\{ 1,\frac{\sqrt{\log (1+s)}}{\sqrt[4]{n}}\Big\}\,\sqrt[4]{n}\Big )\,\sqrt{n}M(K)\end{equation*}
as claimed. $\hfill\Box $

\medskip

Adapting the proof of Theorem~\ref{th:general} we can show that if $C$ is assumed a $\psi_2$-body with constant $\varrho $, which means that every direction
$\xi $ is a $\psi_2$-direction for $C$ with constant $\varrho $, then a much better estimate is available.

\begin{theorem}\label{th:psi2-case}
Let $C$ be an isotropic convex body in $\mathbb{R}^n$, which is a $\psi_2$-body with constant $\varrho $, and $K$ be a
centrally symmetric convex body in $\mathbb{R}^n$. Then for any $s\gr 1$ and every ${\bf t}=(t_1,\ldots,t_s)\in \mathbb{R}^s$,
\begin{equation*}
\|{\bf t}\|_{C^s,K} \ls c\varrho^2\sqrt{n}M(K)\,\|{\bf t}\|_2
\end{equation*}
where $c>0$ is an absolute constant.
\end{theorem}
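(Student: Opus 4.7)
The plan is to repeat the scheme of Theorem~\ref{th:general} but to replace the truncation-and-Talagrand step with a direct application of Theorem~\ref{thm.Talagrand} that exploits the $\psi_2$-hypothesis uniformly, thereby saving the $\sqrt[4]{n}$ factor. Normalize so that $\|{\bf t}\|_2=1$, let $X_1,\ldots,X_s$ be independent and uniformly distributed on $C$, and set $Y=\sum_{j=1}^st_jX_j$; as in Section~4, the measure $\mu_{{\bf t}}$ (the distribution of $Y/L_C$) is isotropic and log-concave and satisfies $\|{\bf t}\|_{C^s,K}=L_C\,I_1(\mu_{{\bf t}},K)$.

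First I would transfer the $\psi_2$-property of $C$ to $\mu_{{\bf t}}$. For any $\xi\in S^{n-1}$ the random variables $\langle X_j,\xi\rangle$ are independent, centered, and satisfy $\|\langle X_j,\xi\rangle\|_{\psi_2}\ls\varrho\,\|\langle X_j,\xi\rangle\|_{L_2}=\varrho L_C$ by the definition of a $\psi_2$-body. The standard subgaussian moment inequality for sums of independent centered $\psi_2$ random variables gives
\begin{equation*}
\|\langle Y,\xi\rangle\|_{\psi_2}^2\ls c\sum_{j=1}^s t_j^2\,\|\langle X_j,\xi\rangle\|_{\psi_2}^2\ls c\varrho^2L_C^2,
\end{equation*}
using $\|{\bf t}\|_2=1$. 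Rescaling by $1/L_C$ yields $\|\langle\cdot,\xi\rangle\|_{L^{\psi_2}(\mu_{{\bf t}})}\ls c\varrho$ for every $\xi\in S^{n-1}$, so $\mu_{{\bf t}}$ is a $\psi_2$-measure with constant $O(\varrho)$.

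Next I would apply Theorem~\ref{thm.Talagrand} (without any prior truncation) to the process $(w_y)_{y\in K^\circ}$ on $({\mathbb R}^n,\mu_{{\bf t}})$ defined by $w_y(x)=\langle x,y\rangle$, and compare with the canonical Gaussian process $(h_y)_{y\in K^\circ}$ used in Section~4.3. Since $\|w_y-w_z\|_{L^{\psi_2}(\mu_{{\bf t}})}\ls c\varrho\,\|y-z\|_2=c\varrho\,\|h_y-h_z\|_2$ and $\mathbb{E}\max_{y\in K^\circ}h_y\approx \sqrt{n}M(K)$ by \eqref{eq.bound.gaussian}, Talagrand's comparison theorem gives
\begin{equation*}
I_1(\mu_{{\bf t}},K)={\mathbb E}_{\mu_{{\bf t}}}\max_{y\in K^\circ}w_y\ls c\varrho\,\sqrt{n}M(K).
\end{equation*}

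Substituting into \eqref{eq:isotropic-identity} produces the intermediate bound $\|{\bf t}\|_{C^s,K}\ls cL_C\varrho\,\sqrt{n}M(K)$. To reach the advertised $c\varrho^2\sqrt{n}M(K)$, I would invoke the classical fact that an isotropic $\psi_2$-body with constant $\varrho$ satisfies $L_C\ls c\varrho$ (a consequence of the uniform $\psi_2$-control of the marginals together with standard arguments on isotropic constants). The step I expect to require the most care is the $\psi_2$-transfer: one must confirm that the subgaussian moment inequality enters with a purely absolute constant, so that no hidden dependence on $n$ or $s$ appears, and that the rescaling $Y\mapsto Y/L_C$ converts the bound for $\nu_{{\bf t}}$ into one for $\mu_{{\bf t}}$ with exactly the scalar factor $1/L_C$. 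Once that is in place, the Talagrand step is a routine comparison and the proof is essentially a one-line corollary of the identity \eqref{eq:isotropic-identity}.
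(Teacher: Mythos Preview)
Your proposal is correct and follows essentially the same route as the paper: the paper also transfers the $\psi_2$-property to $\mu_{{\bf t}}$ via the subgaussian sum inequality (citing \cite[Proposition~2.6.1]{Vershynin.HDP}), applies Talagrand's comparison directly without truncation, and then replaces $L_C$ by $c\varrho$ using the bound $L_C\ls c\varrho$ from \cite[Chapter~7]{BGVV}. Your cautionary remarks about the absolute constant in the $\psi_2$-transfer and the rescaling by $L_C$ are exactly the points the paper handles (with a single citation), so nothing further is needed.
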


\begin{proof}We consider the Gaussian process $h_y(G) = \langle G,y \rangle $, where $G$ is a standard Gaussian random vector
in $\mathbb{R}^n$, and recall that $\|h_y-h_z\|_2=\|y-z\|_2$ and
\begin{equation*}
\mathbb{E}\,\Big(\max_{y\in K^\circ}h_y(G)\Big) \approx \sqrt{n}M(K).
\end{equation*}
The main observation is that if $\|{\bf t}\|_2=1$ then $\mu_{{\bf t}}$ is a $\psi_2$-measure with constant $\varrho $.
Indeed, for any $\xi\in S^{n-1}$ we have (see \cite[Proposition~2.6.1]{Vershynin.HDP}) that if $w_{\xi }(x)=\langle x,\xi\rangle $ then
\begin{equation*}\|\langle x,\xi\rangle \|_{L_{\psi_2}(\mu_{{\bf t}})}^2=\Big\|\Big\langle\sum_{j=1}^s L_C^{-1}t_jX_j,\xi\Big\rangle \Big\|_{L_{\psi_2}(C^s)}^2
\ls\sum_{j=1}^sL_C^{-2}t_j^2\|\langle X_j,\xi\rangle\|_{L_{\psi_2}(C)}^2\ls \varrho^2,\end{equation*}
and hence, for any $y,z\in K^{\circ }$, the $\psi_2$ norm of $w_y-w_z$ can be directly estimated as follows:
\begin{equation*}
\|w_y-w_z\|_{\psi_2}\ls c_1\varrho\|y-z\|_2=c_1\varrho \|h_y-h_z\|_2.
\end{equation*}
Then, Theorem~\ref{thm.Talagrand} and the fact that $L_C\ls c_2\varrho $ (see \cite[Chapter~7]{BGVV}) imply that
\begin{equation*}
\|{\bf t}\|_{C^s,K}=L_C\int_{{\mathbb R}^n}\|x\|_K\,d\mu_{{\bf t}}(x)=L_C\,{\mathbb E}\,\Big(\max_{y\in K^{\circ }}w_y\Big)
\ls c_3\varrho^2\, \mathbb{E}\Big(\max_{y\in K^\circ}h_y(G)\Big) \approx \varrho^2\sqrt{n}M(K).
\end{equation*}
as claimed. \end{proof}

\section{Bodies with bounded cotype-$2$ constant}

Let $K$ be a centrally symmetric convex body in ${\mathbb R}^n$. Recall that if $X_K$ is the normed space with unit ball $K$,
we write $C_{2,k}(X_K)$ for the best constant $C>0$ such that
\begin{align*}
\Big ({\mathbb E}_{\epsilon }\Big \| \sum_{i=1}^k \epsilon_i x_i\Big\|_K^2\Big)^{1/2}
\gr \frac{1}{C} \Big(\sum_{i=1}^k \|x_i\|_K^2\Big)^{1/2}
\end{align*}for all $x_1,\ldots, x_k\in X$. Then, the cotype-2 constant of $X_K$
is defined as $C_2(X_K):=\sup_k C_{2,k}(X_K)$. Replacing the $\epsilon_j$'s by independent standard Gaussian random variables
$g_j$ in the definition above, one may define the Gaussian cotype-$2$ constant $\alpha_2(X_K)$ of $X_K$. One can check that
$\alpha_2(X_K)\ls C_2(X_K)$. E.~Milman has proved in \cite{EMilman-2006} that if $\mu$ is a finite, compactly supported isotropic measure
on $\mathbb R^n$ then, for any centrally symmetric convex body $K$ in $\mathbb R^n$,
\begin{equation}\label{eq:Emanuel-1}
I_1(\mu, K)\ls c_1\alpha_2(X_K)\sqrt{n}M(K)\ls c_1C_2(X_K)\sqrt{n}M(K).
\end{equation}
Using \eqref{eq:Emanuel-1} we can prove the following.

\begin{theorem}\label{th:third}
Let $C$ be an isotropic centrally symmetric convex body in $\mathbb{R}^n$ and $K$ be a centrally symmetric convex body in ${\mathbb R}^n$. Then for any $s\gr 1$
and ${\bf t}=(t_1,\ldots,t_s)\in \mathbb{R}^s$,
\begin{equation*}
\frac{c_1}{C_2(X_K)}{\rm vol}_n(K)^{-1/n}\|{\bf t}\|_2\ls \mathbb{E}_{C^s}\Big\|\sum_{j=1}^s t_j x_j \Big\|_K
\ls \big (c_2L_CC_2(X_K)\sqrt{n}M(K)\big)\,\|{\bf t}\|_2
\end{equation*}
 where $c_1, c_2>0$ are absolute constants. In particular, for any
centrally symmetric convex body $K$ of volume $1$ in ${\mathbb R}^n$ we have that
\begin{equation*}
\frac{c_1}{C_2(X_K)}\|{\bf t}\|_2\ls \mathbb{E}_{K^s}\Big\|\sum_{j=1}^s t_j x_j \Big\|_K \ls \big (c_2L_KC_2(X_K)\sqrt{n}M(K_{{\rm iso}})\big)\,\|{\bf t}\|_2,
\end{equation*}
where $K_{{\rm iso}}$ is an isotropic image of $K$.
\end{theorem}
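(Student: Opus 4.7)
The plan is to deduce both inequalities from the identity \eqref{eq:isotropic-identity}, combining it for the upper bound with E.~Milman's inequality \eqref{eq:Emanuel-1}, and for the lower bound with the short Gluskin--Milman bound of Theorem~\ref{th:GM-short}. The case $C=K$ will then be a one-step reduction using $SL(n)$-invariance.

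For the upper bound, I would fix ${\bf t}\in\mathbb{R}^s$ with $\|{\bf t}\|_2=1$ by homogeneity and invoke the identity
$$\|{\bf t}\|_{C^s,K}=L_C\,I_1(\mu_{{\bf t}},K)$$
from \eqref{eq:isotropic-identity}. The measure $\mu_{{\bf t}}$ constructed at the start of Section~4 is an isotropic, compactly supported, log-concave probability measure on $\mathbb{R}^n$ (the compact support being inherited from $\mathrm{supp}(\nu_{{\bf t}})\subseteq sC$, hence $\mathrm{supp}(\mu_{{\bf t}})\subseteq (s/L_C)\,C$). Applying \eqref{eq:Emanuel-1} to $\mu=\mu_{{\bf t}}$ then gives $I_1(\mu_{{\bf t}},K)\leq c\,C_2(X_K)\sqrt{n}M(K)$, and multiplying by $L_C$ produces the claimed upper bound.

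For the lower bound, Theorem~\ref{th:GM-short} (in the rescaled form stated at the end of Section~3 for general $\vol_n(K)$) delivers
$$\|{\bf t}\|_{C^s,K}\geq \frac{n}{e(n+1)}\,\vol_n(K)^{-1/n}\|{\bf t}\|_2,$$
which, since $C_2(X_K)\geq 1$ for every normed space, at once implies the asserted bound $\frac{c_1}{C_2(X_K)}\vol_n(K)^{-1/n}\|{\bf t}\|_2$. Put differently, the stated lower bound is nothing more than a deliberately weakened version of the (already established) Gluskin--Milman bound, chosen to match the shape of the upper bound.

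For the particular case $C=K$ with $\vol_n(K)=1$, I would exploit the $SL(n)$-invariance $\mathbb{E}_{K^s}\|\sum_j t_jx_j\|_K=\mathbb{E}_{(TK)^s}\|\sum_j t_jx_j\|_{TK}$ and choose $T\in SL(n)$ with $K_{\mathrm{iso}}:=TK$ isotropic. The map $x\mapsto Tx$ is an isometry between $X_K$ and $X_{K_{\mathrm{iso}}}$, so $C_2(X_{K_{\mathrm{iso}}})=C_2(X_K)$, while $L_{K_{\mathrm{iso}}}=L_K$ by the very definition of the isotropic constant. Plugging $C=K_{\mathrm{iso}}$ into the general inequality and using $\vol_n(K_{\mathrm{iso}})=1$ produces both the upper and the lower bound in the particular case. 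There is no real obstacle here: all the technical content already sits in the identity \eqref{eq:isotropic-identity}, which reorganizes the multi-integral as an integral against an isotropic log-concave measure, and in E.~Milman's inequality, which handles the cotype-$2$ interaction with $K$.
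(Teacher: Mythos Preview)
Your proof is correct. The upper bound and the reduction to the isotropic position for $C=K$ match the paper exactly.

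For the lower bound you take a different, and shorter, route than the paper. The paper actually re-derives the lower bound from scratch via cotype-$2$: it introduces random signs (by symmetry of $C$), applies Kahane--Khintchine to pass from the $L_1$ to the $L_2$ average over signs, invokes the cotype-$2$ inequality to extract $\frac{1}{C_2(X_K)}\big(\sum_j t_j^2\|x_j\|_K^2\big)^{1/2}$, and then finishes with Lemma~\ref{lem:BMMP} applied to $f=\mathbf{1}_C$. You instead simply quote the already-established Gluskin--Milman bound (the display at the end of Section~3, which only needs $\vol_n(C)=1$ and allows general $\vol_n(K)$) and weaken it by the trivial inequality $C_2(X_K)\geq 1$. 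Your argument is certainly more efficient; the paper's has the slight conceptual advantage that the factor $1/C_2(X_K)$ appears for an honest reason rather than as an artificial weakening, and the intermediate step $\|{\bf t}\|_{C^s,K}\gtrsim \frac{1}{C_2(X_K)}\|{\bf t}\|_2\int_C\|x\|_K\,dx$ is of some independent interest.
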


\begin{proof}
Combining \eqref{eq:Emanuel-1} with \eqref{eq:isotropic-identity} we get
\begin{equation*}\|{\bf t}\|_{C^s,K}\ls c_1L_CC_2(X_K)\,\sqrt{n}M(K)\end{equation*}
for all ${\bf t}\in {\mathbb R}^s$ with $\|{\bf t}\|_2=1$. On the other hand, for any ${\bf t}\in {\mathbb R}^s$,
by the symmetry of $C$ we have that
\begin{align*}\|{\bf t}\|_{C^s,K} &= \int_C\cdots \int_C\int_{E_2^s}\Big\|\sum_{j=1}^s\epsilon_jt_jx_j\Big\|_Kd\mu_s(\epsilon )\,dx_1\cdots dx_s\\
&\gr \int_C\cdots \int_C\frac{1}{\sqrt{2}}\Big(\int_{E_2^s}\Big\|\sum_{j=1}^s\epsilon_jt_jx_j\Big\|_K^2d\mu_s(\epsilon )\Big)^{1/2}\,dx_1\cdots dx_s\\
&\gr \frac{1}{\sqrt{2}C_2(X_K)}\int_C\cdots \int_C\Big (\sum_{j=1}^st_j^2\|x_j\|_K^2\Big )^{1/2}\,dx_1\cdots dx_s\\
&\gr \frac{c}{C_2(X_K)}\Big (\sum_{j=1}^st_j^2\int_C\|x_j\|_K^2dx_j\Big )^{1/2}\\
&\gr \frac{c}{C_2(X_K)}\|{\bf t}\|_2\int_C\|x\|_K\,dx,
\end{align*}
where $c>0$ is an absolute constant (in the first inequality we are using the Kahane-Khintchine inequality
and in the third inequality we are using \cite[Theorem~2.4.6]{BGVV}
for the semi-norm $(x_1,\ldots ,x_s)\mapsto \left (\sum_{j=1}^st_j^2\|x_j\|_K^2\right )^{1/2}$ on $C^s$, while in
the last step we are using the Cauchy-Schwarz inequality for $\|\cdot\|_K$ on $C$). From Lemma~\ref{lem:BMMP} with $f={\bf 1}_C$ we see that
\begin{equation*}\int_C\|x\|_K\,dx\gr \frac{n}{n+1}{\rm vol}_n(K)^{-1/n},\end{equation*}
and the result follows.

In the case $C=K$, we may assume that $K$ is isotropic and these bounds take the form
\begin{equation*}
\frac{c_1}{C_2(X_K)}\|{\bf t}\|_2\ls \mathbb{E}_{K^s}\Big\|\sum_{j=1}^s t_j x_j \Big\|_K \ls \big (c_2L_KC_2(X_K)\sqrt{n}M(K)\big)\,\|{\bf t}\|_2.
\end{equation*}
This completes the proof.
\end{proof}

\begin{remark}\rm Another interesting case is when $K$ has bounded type-$2$ constant. Recall that if $X_K$ is the normed space with unit ball $K$,
we write $T_{2,k}(X_K)$ for the best constant $T>0$ such that
\begin{align*}
\Big ({\mathbb E}_{\epsilon }\Big \| \sum_{i=1}^k \epsilon_i x_i\Big\|_K^2\Big)^{1/2}
\ls T\Big(\sum_{i=1}^k \|x_i\|_K^2\Big)^{1/2}
\end{align*}for all $x_1,\ldots, x_k\in X$. Then, the type-2 constant of $X_K$
is defined as $T_2(X_K):=\sup_k T_{2,k}(X_K)$. E.\ Milman has proved in \cite{EMilman-2006} that if $\mu$ is a finite, compactly supported isotropic measure
on $\mathbb R^n$ then, for any centrally symmetric convex body $K$ in $\mathbb R^n$,
\begin{equation}
I_1(\mu, K)\gr c\sqrt{n}\frac{M(K)}{T_2(X_K)}.
\end{equation}
Using this inequality and following a similar argument, as in the cotype-$2$ case, we get:

\begin{theorem}\label{th:second}
Let $C$ be an isotropic centrally symmetric convex body in $\mathbb{R}^n$ and $K$ be a centrally symmetric convex body in ${\mathbb R}^n$. Then for any $s\gr 1$
and ${\bf t}=(t_1,\ldots,t_s)\in \mathbb{R}^s$,
\begin{equation*}
\frac{c_1L_C\sqrt{n}M(K)}{T_2(X_K)}\,\|{\bf t}\|_2\ls \mathbb{E}_{C^s}\Big\|\sum_{j=1}^s t_j x_j \Big\|_K \ls c_2T_2(X_K)\Big (\int_C\|x\|_Kdx\Big)
\,\|{\bf t}\|_2\end{equation*}
where $c_1,c_2>0$ are absolute constants. In particular, for any centrally symmetric convex body $K$ of volume $1$ in ${\mathbb R}^n$ we have that
\begin{equation*}
\frac{c_1L_K\sqrt{n}M(K)}{T_2(X_K)}\|{\bf t}\|_2\ls \mathbb{E}_{K^s}\Big\|\sum_{j=1}^s t_j x_j \Big\|_K \ls c_2T_2(X_K)\|{\bf t}\|_2.
\end{equation*}
\end{theorem}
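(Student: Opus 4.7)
The plan is to follow the proof of Theorem~\ref{th:third} almost verbatim, but with the Rademacher moment comparison reversed and with cotype replaced by type. By homogeneity I may assume $\|{\bf t}\|_{2}=1$. For the lower bound I would use identity \eqref{eq:isotropic-identity}, $\|{\bf t}\|_{C^{s},K}=L_{C}\,I_{1}(\mu_{{\bf t}},K)$, where $\mu_{{\bf t}}$ is the isotropic log-concave probability measure constructed in Section~4. E.~Milman's type-$2$ estimate quoted just above the theorem then gives $I_{1}(\mu_{{\bf t}},K)\gr c\sqrt{n}M(K)/T_{2}(X_{K})$, and multiplication by $L_{C}$ produces the asserted lower bound.

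For the upper bound, using the central symmetry of $C$ I would insert a Rademacher average, writing $\mathbb{E}_{\epsilon}$ for expectation with respect to the uniform measure on $E_{2}^{s}$:
\begin{equation*}
\|{\bf t}\|_{C^{s},K}=\int_{C^{s}}\mathbb{E}_{\epsilon}\Big\|\sum_{j=1}^{s}\epsilon_{j}t_{j}x_{j}\Big\|_{K}\,dx_{1}\cdots dx_{s}.
\end{equation*}
For each fixed $(x_{1},\ldots,x_{s})$ Jensen's inequality bounds the inner $L^{1}$-average by the $L^{2}$-average, and the definition of the type-$2$ constant then gives the pointwise bound by $T_{2}(X_{K})\big(\sum_{j}t_{j}^{2}\|x_{j}\|_{K}^{2}\big)^{1/2}$. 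Integrating over $C^{s}$ and applying Jensen (concavity of $\sqrt{\cdot}$) together with Fubini and independence yields
\begin{equation*}
\int_{C^{s}}\Big(\sum_{j=1}^{s}t_{j}^{2}\|x_{j}\|_{K}^{2}\Big)^{1/2}dx_{1}\cdots dx_{s}\ls\|{\bf t}\|_{2}\Big(\int_{C}\|x\|_{K}^{2}dx\Big)^{1/2}.
\end{equation*}
To match the statement I would finally replace the $L^{2}$-norm by the $L^{1}$-norm of $\|\cdot\|_{K}$ on $C$ via Lemma~\ref{lem:higher-moments} applied with $s=1$ and $q=2$; this is Borell's reverse H\"older inequality for the seminorm $x\mapsto\|x\|_{K}$ against the log-concave density ${\bf 1}_{C}$, and it costs only an absolute constant.

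The special case $C=K$ of volume $1$ follows by first reducing to $K$ isotropic via the $SL(n)$-invariance of $\mathbb{E}_{K^{s}}\|\sum_{j}t_{j}x_{j}\|_{K}$: the lower bound then takes the stated form with $L_{K}$ and $M(K)$, while for the upper bound the elementary polar-type computation $\int_{K}\|x\|_{K}\,dx=\int_{0}^{1}(1-t^{n})dt=n/(n+1)\ls 1$ absorbs the $\int_{C}\|x\|_{K}dx$ factor into an absolute constant, giving $c_{2}T_{2}(X_{K})\|{\bf t}\|_{2}$. The only step that is not completely mechanical is the reverse H\"older passage from $L^{2}$ to $L^{1}$ in the upper bound; everything else is a direct transposition of the cotype-$2$ argument with the Rademacher moment comparison used in the opposite direction.
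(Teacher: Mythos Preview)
Your proposal is correct and is precisely the argument the paper intends: it explicitly says the theorem follows ``using this inequality and following a similar argument, as in the cotype-$2$ case,'' and your transposition of the proof of Theorem~\ref{th:third}---E.~Milman's lower bound on $I_1(\mu_{\bf t},K)$ for the left inequality, and the Rademacher/type-$2$/Jensen/reverse H\"older chain for the right inequality---is exactly that similar argument. The only minor remark is that in the special case $C=K$ the $SL(n)$-reduction yields $M(K_{\rm iso})$ rather than $M(K)$ in the lower bound (compare the corresponding line in Theorem~\ref{th:third}); this is a notational quirk of the statement, not a defect in your proof.
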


Note that if $\vol_n(K)=1$ then $\sqrt{n}M(K)\gr c>0$, therefore the estimate is exact, up to the type-$2$ constant, and actually implies
an upper bound for $L_K$.
\end{remark}

\section{The unconditional case}

The case where $C_1,\ldots ,C_s$ and $K$ are isotropic unconditional convex bodies in ${\mathbb R}^n$ has been essentially studied in \cite[Theorem~4.1]{GHT}.

\begin{theorem}\label{theorem.GHT.unconditional}
There exists an absolute constant $c>0$ with the following property: if $K$ and $C_1,\ldots,C_s$ are isotropic unconditional convex bodies
in $\mathbb{R}^n$ then, for every $q\gr 1$,
\begin{equation*}
\Big(\int_{C_1}\ldots\int_{C_s} \Big\| \sum_{j=1}^s t_jx_j\Big\|_K^q \,dx_1\ldots dx_s \Big)^{1/q}
\ls cn^{1/q}\sqrt{q} \cdot \max\{\|{\bf t}\|_2,\sqrt{q}\|{\bf t}\|_\infty\}\ls cn^{1/q}q\,\|{\bf t}\|_2,
\end{equation*}
for every ${\bf t}=(t_1,\ldots,t_s)\in \mathbb{R}^s$. In particular,
\begin{equation*}\|{\bf t}\|_{{\cal C},K}\ls c\sqrt{\log n}\cdot \max\{\|{\bf t}\|_2,\sqrt{\log n}\|{\bf t}\|_\infty\}
\ls c\log n\,\|{\bf t}\|_2.\end{equation*}
\end{theorem}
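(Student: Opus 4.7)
The plan is to prove the $L^q$ moment bound
\begin{equation*}
\Big(\mathbb{E}_{C^s}\Big\|\sum_j t_jx_j\Big\|_K^q\Big)^{1/q}\ls cn^{1/q}\sqrt{q}\,\max\{\|{\bf t}\|_2,\sqrt{q}\|{\bf t}\|_\infty\}
\end{equation*}
for every $q\gr 1$; the ``in particular'' estimate then follows by Jensen's inequality and the choice $q=\log n$, which gives $n^{1/q}=e$. By the symmetry of the $C_j$ we may assume $t_j\gr 0$. Let $X_j$ be independent random vectors, uniformly distributed on $C_j$, and set $S:=\sum_{j=1}^s t_jX_j$; the distribution of $S$ is unconditional (by independence and unconditionality of each $X_j$) and log-concave (by Pr\'{e}kopa--Leindler).

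The structural input that makes unconditionality decisive is the inradius bound
\begin{equation*}
c_0\,B_\infty^n\subseteq K
\end{equation*}
for some absolute constant $c_0>0$. To prove it, set $\bar{x}:=\int_K|x_1|\,dx$. Borell's reverse H\"{o}lder inequality for one-dimensional log-concave densities gives $\bar{x}\gr c_1\bigl(\int_K x_1^2\,dx\bigr)^{1/2}=c_1L_K$, and $L_K\gr c_2>0$ by Hensley's lower bound on the isotropic constant. By full unconditionality, the centroid of the positive part $K_+=K\cap \mathbb{R}^n_{\gr 0}$ coincides with $(\bar{x},\ldots,\bar{x})$, and therefore lies in $K_+\subseteq K$; the solidness property of the unconditional norm (i.e.\ $|y_i|\ls|x_i|$ for all $i$ implies $\|y\|_K\ls\|x\|_K$) then upgrades this to $\bar{x}B_\infty^n\subseteq K$, whence $\|y\|_K\ls c_0^{-1}\|y\|_\infty$ for every $y\in \mathbb{R}^n$.

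Consequently, $\|S\|_K\ls c_0^{-1}\max_{i\ls n}|S_i|$ with $S_i:=\sum_j t_j(X_j)_i$. For each $i$, the random variables $(X_j)_i$ are independent, symmetric (by unconditionality), and by Borell's lemma applied to the one-dimensional marginal $\|(X_j)_i\|_{\psi_1}\ls c_3L_{C_j}$. The Bobkov--Nazarov theorem provides the uniform bound $L_{C_j}\ls c_4$ for unconditional isotropic convex bodies, so $\|(X_j)_i\|_{\psi_1}\ls c_5$ uniformly in $i,j$. Bernstein's inequality for sums of independent mean-zero sub-exponential random variables then gives
\begin{equation*}
\|S_i\|_{L^q}\ls c_6\bigl(\sqrt{q}\,\|{\bf t}\|_2+q\,\|{\bf t}\|_\infty\bigr)
\end{equation*}
uniformly in $i$. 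Applying the elementary estimate $\|\max_{i\ls n}|S_i|\|_{L^q}\ls n^{1/q}\max_i\|S_i\|_{L^q}$ together with $\sqrt{q}\|{\bf t}\|_2+q\|{\bf t}\|_\infty\ls 2\sqrt{q}\max\{\|{\bf t}\|_2,\sqrt{q}\|{\bf t}\|_\infty\}$ concludes the $L^q$ estimate.

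The main obstacle is the inradius bound $K\supseteq c_0B_\infty^n$, which packages two ingredients specific to the unconditional setting: the Bobkov--Nazarov upper bound $L_K\ls c$ and the possibility of converting a single ``corner'' (the centroid of $K_+$) into a full $\ell_\infty$-ball via solidness. Without it, one would have only the pointwise Bernstein bound $\|\langle S,y\rangle\|_{L^q}\ls c(\sqrt{q}\|y\|_2\|{\bf t}\|_2+q\|y\|_\infty\|{\bf t}\|_\infty)$ for each $y\in K^{\circ}$, and controlling the supremum over $K^{\circ}$ with only an $n^{1/q}$-factor loss would be considerably more delicate; the inradius bound bypasses this by reducing the problem directly to the $n$ coordinate directions, where a crude $L^q$ union bound is sharp.
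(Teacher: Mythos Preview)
Your proof is correct and reaches the same conclusion, but the route after the common first step is genuinely different from the paper's.

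Both arguments begin with the inradius bound $c_0B_\infty^n\subseteq K$, so that $\|\cdot\|_K\ls c_0^{-1}\|\cdot\|_\infty$. The paper simply quotes this from Bobkov--Nazarov; your self-contained derivation via the centroid of $K\cap\mathbb{R}_{\gr 0}^n$ and solidness is a nice touch. From there the two proofs diverge. The paper passes from $\|\cdot\|_\infty$ to $\|\cdot\|_{2q}$, expands the multinomial $\big\|\sum_jt_jx_j\big\|_{2q}^{2q}$ explicitly, controls each factor $\int_{C_i}x_{ij}^{2q_i}dx_i$ via the Bobkov--Nazarov comparison with $V_n=\sqrt{3/2}\,nB_1^n$, and finishes with a combinatorial polynomial estimate on $P_q(y)=\sum_{q_1+\cdots+q_s=q}y_1^{q_1}\cdots y_s^{q_s}$. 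You instead stay with $\|\cdot\|_\infty$, apply Bernstein's inequality coordinatewise to get $\|S_i\|_{L^q}\ls c(\sqrt{q}\|{\bf t}\|_2+q\|{\bf t}\|_\infty)$, and close with the trivial $L^q$ union bound $\|\max_i|S_i|\|_{L^q}\ls n^{1/q}\max_i\|S_i\|_{L^q}$.

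Your argument is cleaner and more probabilistic; the paper's is more explicit and exploits the second Bobkov--Nazarov inclusion $C_j\subseteq V_n$ directly. It is worth noting that the paper's Remark~6.2 sketches a third approach closer in spirit to yours (reduce to $\|\cdot\|_\infty$ and use the $\psi_1$ property of the isotropic measure $\mu_{\bf t}$), but that version only recovers the cruder $c\log n\,\|{\bf t}\|_2$ bound; your use of Bernstein is exactly what is needed to recover the refined $\sqrt{q}\max\{\|{\bf t}\|_2,\sqrt{q}\|{\bf t}\|_\infty\}$ dependence that the paper's combinatorial argument achieves.
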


\begin{proof}We briefly sketch the argument, which is essentially the same as in \cite{GHT}.
We write $\mu_n$ for the uniform distribution on $B_1^n$, with density $\frac{d\mu_n(x)}{dx}=\frac{n!}{2^n}{\mathbf 1}_{B_1^n}(x)$.
If we set $\Delta_n=\{x\in {\mathbb R}_+^n: x_1+\cdots +x_n\ls 1\}$ then a simple computation shows that for
every $n$-tuple of non-negative integers $p_1,\ldots ,p_n$, one has
\begin{equation*}\int_{\Delta_n}x_1^{p_1}\ldots x_n^{p_n}dx=\frac{p_1!\cdots p_n!}{(n+p_1+\cdots +p_n)!}.\end{equation*}
In \cite{BN1} it is proved that for every isotropic unconditional convex body $K$ in ${\mathbb
R}^n$ one has $cB_{\infty }^n\subseteq K\subseteq V_n$, where $V_n=\sqrt{3/2}nB_1^n$ and $c>0$ is an absolute constant.
Therefore, $\|\cdot\|_K\ls c_1\|\cdot\|_{\infty }\ls c_1\|\cdot\|_q$, where $c_1>0$ is an
absolute constant. We proceed to give an upper bound for
\begin{equation*}F_{{\cal C},q}({\bf t}):=\int_{C_1}\cdots\int_{C_s}\big\|\sum_{i=1}^st_ix_i\big\|_{2q}^{2q}\, dx_1\cdots dx_s,\end{equation*}
where $q\geq 1$ is an integer. We write $x_i=(x_{i1},\ldots ,x_{in})$ and define $y_j=(x_{1j},\ldots
,x_{sj})$ for all $j=1,\ldots ,n$. Then,
\begin{equation*}F_{{\cal C},q}({\bf t}) =\int_{C_1}\cdots\int_{C_s}\sum_{j=1}^n\langle {\bf t},y_j\rangle^{2q}
\,dx_1\cdots dx_s =\sum_{j=1}^n\sum_{q_1+\cdots +q_s=q}\frac{(2q)!}{(2q_1)!\cdots (2q_s)!}
\prod_{i=1}^st_i^{2q_i}\int_{C_i}x_{ij}^{2q_i}\,dx_i.\end{equation*}
Next, we apply a comparison theorem from \cite{BN2}: for every function
$F:{\mathbb R}^n\rightarrow {\mathbb R}$ which is centrally symmetric,
coordinatewise increasing and absolutely continuous, we have that
\begin{equation*}\int F(x)d\mu_A(x)\ls \int F(x)d\mu_{V_n}(x),\end{equation*}
where $\mu_A$ is the uniform measure on the isotropic unconditional convex body $A$. It follows that
\begin{equation*}\int_{C_i}x_{ij}^{2q_i}\,dx_i\ls \int_{V_n}x_1^{2q_i}d\mu_{V_n}(x)\ls
(c_1n)^{2q_i}n!\int_{\Delta_n}x_1^{2q_i}dx=(c_1n)^{2q_i}\frac{n!(2q_i)!}{(n+2q_i)!},\end{equation*}
where $c_1=\sqrt{3/2}$. Combining the above we see that
\begin{equation*}F_{{\cal C},q}({\bf t})\ls n (n!)^s(c_1n)^{2q}(2q)!\sum_{q_1+\cdots +q_s=q}
\frac{t_1^{2q_1}\cdots t_s^{2q_s}}{(n+2q_1)!\cdots
(n+2q_s)!}.\end{equation*} Using the estimate $(n+2r)!\geq n!n^{2r}$ which holds for every
$r\geq 0$, we get
\begin{equation*}F_{{\cal C},q}({\bf t})\ls nc_1^{2q}(2q)!\sum_{q_1+\cdots +q_s=q}t_1^{2q_1}\cdots t_s^{2q_s}.\end{equation*}
We now use another observation from \cite{BN2}: if $q\geq 1$ is an integer and $P_q(y)=\sum_{q_1+\cdots +q_s=q}y_1^{q_1}\cdots y_s^{q_s}$, $y=(y_1,\ldots ,y_s)\in {\mathbb R}_+^s$,
then for any $y\in {\mathbb R}_+^s$ with $y_1+\cdots +y_s=1$ we have
\begin{equation*}P_q(y)\ls\left (2e\max\{ 1/q,\| y\|_{\infty }\}\right )^q.\end{equation*}
Applying this inequality to the $s$-tuple $y=\frac{1}{\|{\bf t}\|_2^2}\big ( t_1^2,\ldots ,t_s^2\big )$ we get
\begin{equation*}
F_{{\cal C},q}^{\frac{1}{2q}}({\bf t}) \ls  c_1n^{\frac{1}{2q}}\sqrt[2q]{(2q)!}\left (2e\max\{\|{\bf t}\|_2^2/q,\|{\bf t}\|_{\infty }^2 \}\right )^{1/2}
\ls  c_2n^{\frac{1}{2q}}\sqrt{q}\max\{ \|{\bf t}\|_2,\sqrt{q}\|{\bf t}\|_{\infty }\}. \end{equation*}
Then, we easily conclude the proof.\end{proof}

\begin{remark}\label{rem:unco-1}\rm Using our approach we can obtain a similar upper bound directly.
Consider ${\bf t}\in {\mathbb R}^s$ with $\|{\bf t}\|_2=1$. As usual, we have
\begin{equation*}\|{\bf t}\|_{C^s,K}=L_C\, I_1(\mu_{{\bf t}},K),\end{equation*}
where $\mu_{{\bf t}}$ is an unconditional isotropic log-concave probability measure. Since $K$ is also unconditional and
isotropic, we have $c_1B_{\infty }^n\subseteq K$ and hence $\|x\|_K\ls c_1^{-1}\|x\|_{\infty }$ for all $x\in {\mathbb R}^n$.
Therefore,
\begin{equation*}I_1(\mu_{{\bf t}},K)=\int_{{\mathbb R}^n}\|x\|_Kd\mu_{{\bf t}}(x)\ls
c_1^{-1}\int_{{\mathbb R}^n}\max_{1\ls i\ls n}|\langle x,e_i\rangle |\,d\mu_{{\bf t}}(x)\ls c_2\log n
\end{equation*}because $\mu_{{\bf t}}$ is an isotropic $\psi_1$-measure with an absolute constant $\varrho $
(see \cite[Proposition~3.5.8]{AGA-book}). Since $C$ is unconditional, we also have $L_C\ls c_3$ for some
absolute constant $c_3>0$; it follows that
\begin{equation*}\|{\bf t}\|_{C^s,K}\ls c_4\log n\,\|{\bf t}\|_2\end{equation*}
for every ${\bf t}\in {\mathbb R}^s$. Of course the estimate of Theorem~\ref{theorem.GHT.unconditional} is more delicate, and
can be better by a $\sqrt{\log n}$-term, as it depends on the coordinates of ${\bf t}$.
\end{remark}

\begin{remark}\label{rem:unco-2}\rm In \cite{GHT} it is observed that the $\ell_\infty$-term in the estimate
provided by Theorem~\ref{theorem.GHT.unconditional} cannot be removed. If $C=\overline{B_1^n}$ and
$K=\frac{1}{2} B_{\infty }^n$ then choosing the vector ${\bf e_1}=(1,0,0,\ldots,0)$ we have
\begin{equation*}\|{\bf e_1}\|_{C^s,K}= \int_{\overline{B_1^n}} 2\|x\|_\infty\,dx\gr c\log n\,\|{\bf e_1}\|_{\infty }\end{equation*}
for some absolute constant $c>0$.

The example of the cube shows that the term $\sqrt{\log n}\|{\bf t}\|_2$ is necessary.
Gluskin and V.~Milman show in \cite{Gluskin-VMilman-2004} that if $C=K=\frac{1}{2}B_{\infty}^n$ then
\begin{equation*}\|{\bf t}\|_{K^n,K}\approx q_n({\bf t})=\sum_{i=1}^{u}t_i^{\ast }+\sqrt{u}\left (\sum_{i=u+1}^n
(t_i^{\ast })^2\right )^{1/2}\end{equation*} where $u\approx \log n$
and $(t_i^{\ast })_{i\ls n}$ is the decreasing rearrangement of $(|t_j|)_{j=1}^n$. It is observed in \cite[Remark~4.5]{GHT}
that this implies the lower bound
\begin{equation*}\int_{S^{n-1}}\|{\bf t}\|_{K^n,K}\,d\sigma ({\bf t})\gr c\sqrt{\log n}.\end{equation*}
\end{remark}

\begin{remark}\label{rem:ell-p}\rm It is interesting to test the results of Section~4 and Section~5 on the example of the
$\ell_p^n$-balls $B_p^n$. Let us first assume that $1\ls p\ls 2$. Then, $\ell_p^n$ has cotype-2 constant bounded by an absolute
(independent from $p$ and $n$) constant. It is also known (see \cite[Chapter~5]{AGA-book}) that $M(B_p^n)\approx n^{\frac{1}{p}-\frac{1}{2}}$ and
$\vol_n(B_p^n)^{1/n}\approx n^{-\frac{1}{p}}$. It follows that
\begin{equation*}M(\overline{B_p^n})=\vol_n(B_p^n)^{1/n}M(B_p^n)\approx 1/\sqrt{n}.\end{equation*}
Since $\overline{B_p^n}$ is isotropic and its isotropic constant is also bounded by an absolute constant, Theorem~\ref{th:third} shows that
\begin{equation*}\|{\bf t}\|_{\overline{B_p^n}^s,\overline{B_p^n}} \ls c_1\,\|{\bf t}\|_2\end{equation*}
for every $s\gr 1$ and ${\bf t}\in {\mathbb R}^s$, where $c_1>0$ is an absolute constant.

Next, let us assume that $2\ls q\ls \infty$. It is then known (see \cite[Chapter~5]{AGA-book})
that $\vol_n(B_q^n)^{1/n}\approx n^{-\frac{1}{q}}$ and
\begin{equation*}M(B_q^n)\approx \min\{\sqrt{q},\sqrt{\log n}\}n^{\frac{1}{q}-\frac{1}{2}}.\end{equation*}
It follows that
\begin{equation*}M(\overline{B_q^n})=\vol_n(B_q^n)^{1/n}M(B_q^n)\approx \min\{\sqrt{q},\sqrt{\log n}\}/\sqrt{n}.\end{equation*}
Since $\overline{B_q^n}$ is an isotropic $\psi_2$-convex body with constant $\varrho\approx 1$ (independent from $q$
and $n$ -- see \cite{Barthe-Guedon-Mendelson-Naor-2005}) and its isotropic constant is also bounded by an absolute constant,
Theorem~\ref{th:psi2-case} shows that
\begin{equation*}\|{\bf t}\|_{\overline{B_q^n}^s,\overline{B_q^n}} \ls c_2\min\{\sqrt{q},\sqrt{\log n}\}\,\|{\bf t}\|_2\end{equation*}
for every $s\gr 1$ and ${\bf t}\in {\mathbb R}^s$, where $c_2>0$ is an absolute constant.
\end{remark}

\section{Applications to vector balancing problems}

Let $\mu $ be an isotropic log-concave probability measure on ${\mathbb R}^n$ and $K$ be a centrally symmetric convex body in ${\mathbb R}^n$. Our starting observation is that
\begin{equation*}\int_{O(n)}I_1(\mu ,U(K))\,d\nu (U) =\int_{{\mathbb R}^n}\int_{O(n)}\|x\|_{U(K)}d\nu (U)\,d\mu (x)=
M(K)\int_{{\mathbb R}^n}\|x\|_2d\mu (x)\approx \sqrt{n}M(K).\end{equation*}
Applying this fact for the measure $\mu_{{\bf t}}$, from \eqref{eq:isotropic-identity} we immediately get the following.

\begin{proposition}\label{prop:rot-lower-bound}Let $C$ be an isotropic convex body in $\mathbb{R}^n$ and $K$ be a centrally symmetric convex body
in $\mathbb{R}^n$. For every ${\bf t}=(t_1,\ldots,t_s)\in \mathbb{R}^s$ there exists $U\in O(n)$ such that
\begin{equation}\label{eq:isotropic-identity-2}\|{\bf t}\|_{U(C)^s,K}\gr cL_C\sqrt{n}M(K)\,\|{\bf t}\|_2.\end{equation}
\end{proposition}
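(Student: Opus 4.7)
The plan is to carry out a simple rotation-averaging argument that packages the computation displayed immediately before the proposition. By homogeneity I may assume $\|{\bf t}\|_2=1$.

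The first step is to record the change-of-variables identity
$$\|{\bf t}\|_{U(C)^s,K}=\|{\bf t}\|_{C^s,U^{-1}(K)}\qquad (U\in O(n)),$$
which follows from substituting $x_j=U(y_j)$ in the defining multi-integral (orthogonal transformations preserve volume) together with $\|U(y)\|_K=\|y\|_{U^{-1}(K)}$. Combined with the identity \eqref{eq:isotropic-identity}, this rewrites the quantity of interest as
$$\|{\bf t}\|_{U(C)^s,K}=L_C\,I_1\bigl(\mu_{{\bf t}},U^{-1}(K)\bigr).$$

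Next, I would integrate both sides with respect to the Haar probability measure $\nu$ on $O(n)$. By Fubini and the $\nu$-invariance of $U\mapsto U^{-1}$, the average becomes
$$\int_{O(n)}\|{\bf t}\|_{U(C)^s,K}\,d\nu(U)=L_C\int_{\mathbb{R}^n}\Big(\int_{O(n)}\|x\|_{V(K)}\,d\nu(V)\Big)\,d\mu_{{\bf t}}(x)=L_C\,M(K)\int_{\mathbb{R}^n}\|x\|_2\,d\mu_{{\bf t}}(x),$$
the inner integral equaling $\|x\|_2\,M(K)$ because $V\mapsto V(x/\|x\|_2)$ pushes $\nu$ forward to the uniform probability measure $\sigma$ on $S^{n-1}$.

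To finish, I would bound $\int\|x\|_2\,d\mu_{{\bf t}}$ from below by $c\sqrt{n}$. The upper bound $\sqrt{n}$ is immediate from Cauchy--Schwarz and the isotropicity of $\mu_{{\bf t}}$, and the matching lower bound follows from the reverse H\"older inequality for seminorms against log-concave measures (\cite[Theorem~2.4.6]{BGVV}, the same tool already used in Section~4.1). Consequently the $\nu$-average of $\|{\bf t}\|_{U(C)^s,K}$ is at least $c\,L_C\sqrt{n}\,M(K)$, and some $U\in O(n)$ realizes at least the average, which gives the claim after undoing the normalization $\|{\bf t}\|_2=1$. No step is really delicate here; the only point requiring care is the bookkeeping of whether one rotates $C$ or $K$, which the change-of-variables identity handles cleanly.
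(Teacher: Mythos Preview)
Your argument is correct and follows essentially the same route as the paper: average $\|{\bf t}\|_{U(C)^s,K}=L_C\,I_1(\mu_{{\bf t}},U^{-1}(K))$ over $U\in O(n)$, identify the result as $L_C\,M(K)\int\|x\|_2\,d\mu_{{\bf t}}(x)\approx L_C\sqrt{n}M(K)$, and pick a $U$ achieving at least the average. You have simply made explicit the change-of-variables step $\|{\bf t}\|_{U(C)^s,K}=\|{\bf t}\|_{C^s,U^{-1}(K)}$ and the lower bound $\int\|x\|_2\,d\mu_{{\bf t}}\gr c\sqrt{n}$ that the paper leaves implicit in the symbol ``$\approx$''.
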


We know that if ${\rm vol}_n(K)=1$ then the quantity $\sqrt{n}M(K)$ is always greater than $c$. Therefore, Proposition \ref{prop:rot-lower-bound}
provides many examples in which the lower bound of Gluskin and V.~Milman can be improved (note also the presence of $L_C$ in the
right hand side of the inequality). For example, in the classical example of the cube $K=\dfrac{1}{2}B_{\infty }^n$ we have that
$\sqrt{n}M(K)\approx\sqrt{\log n}$, which implies the following:

\begin{corollary}For every isotropic convex body $C$ in ${\mathbb R}^n$ and any ${\bf t}=(t_1,\ldots,t_s)\in \mathbb{R}^s$ there exists $U\in O(n)$ such that
\begin{equation*}\int_{U(C)}\cdots\int_{U(C)}\Big\|\sum_{j=1}^st_jx_j\Big\|_{\infty }dx_1\cdots dx_s
\gr cL_C\sqrt{\log n}\,\|{\bf t}\|_2,\end{equation*}
where $c>0$ is an absolute constant.\end{corollary}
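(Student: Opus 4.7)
The plan is to apply Proposition~\ref{prop:rot-lower-bound} with the specific choice $K=\tfrac{1}{2}B_\infty^n$, and to convert the resulting statement into one about the $\ell_\infty$-norm.

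First I would note that $\tfrac{1}{2}B_\infty^n$ is a centrally symmetric convex body in $\mathbb{R}^n$, so Proposition~\ref{prop:rot-lower-bound} applies: for any ${\bf t}\in\mathbb{R}^s$ there exists $U\in O(n)$ with
\begin{equation*}
\|{\bf t}\|_{U(C)^s,\,\tfrac{1}{2}B_\infty^n}\gr c\,L_C\sqrt{n}\,M\bigl(\tfrac{1}{2}B_\infty^n\bigr)\,\|{\bf t}\|_2.
\end{equation*}
Since $C$ is isotropic, $\vol_n(U(C))=1$, so the left-hand side unfolds, using the identity $\|x\|_{\tfrac{1}{2}B_\infty^n}=2\|x\|_\infty$, as
\begin{equation*}
\|{\bf t}\|_{U(C)^s,\,\tfrac{1}{2}B_\infty^n}=2\int_{U(C)}\!\!\cdots\!\int_{U(C)}\Big\|\sum_{j=1}^s t_jx_j\Big\|_\infty dx_1\cdots dx_s.
\end{equation*}

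Next I would establish the estimate $\sqrt{n}\,M(\tfrac{1}{2}B_\infty^n)\approx\sqrt{\log n}$. Homogeneity gives $M(\tfrac{1}{2}B_\infty^n)=2M(B_\infty^n)$, and the classical computation of the mean width of the cube yields $M(B_\infty^n)\approx\sqrt{(\log n)/n}$ (which can be recovered by comparing the spherical average of $\|\xi\|_\infty$ with the Gaussian expectation $\mathbb{E}\|G\|_\infty\approx\sqrt{\log n}$ via concentration of $\|G\|_2$ around $\sqrt{n}$). Hence $\sqrt{n}\,M(\tfrac{1}{2}B_\infty^n)\approx\sqrt{\log n}$.

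Combining these two ingredients, the integral of $\|\sum_j t_jx_j\|_\infty$ over $U(C)^s$ is bounded below by a constant multiple of $L_C\sqrt{\log n}\,\|{\bf t}\|_2$, which is exactly the claim. There is essentially no obstacle here beyond the bookkeeping of constants: once Proposition~\ref{prop:rot-lower-bound} and the standard mean-width estimate for the cube are in hand, the corollary is a direct substitution. The only subtle point worth being careful about is the volume-one normalization of $K$ used implicitly in converting $\|\cdot\|_K$ to a multiple of the $\ell_\infty$-norm.
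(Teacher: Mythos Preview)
Your proposal is correct and follows essentially the same approach as the paper: both apply Proposition~\ref{prop:rot-lower-bound} with $K=\tfrac{1}{2}B_\infty^n$ and use the standard estimate $\sqrt{n}\,M\bigl(\tfrac{1}{2}B_\infty^n\bigr)\approx\sqrt{\log n}$. The paper simply states this substitution in one line, while you spell out the conversion $\|x\|_{\frac{1}{2}B_\infty^n}=2\|x\|_\infty$ and the mean-width computation explicitly.
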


In this section we explore further this idea. We shall use a number of important facts from
asymptotic convex geometry (see \cite{BGVV} for proofs and additional references). For any centrally symmetric convex body $K$ in
${\mathbb R}^n$ and any $q\neq 0$ we define
\begin{equation*}M_q(K)=\left (\int_{S^{n-1}}\|\xi\|_K^qd\sigma (\xi )\right )^{1/q}.\end{equation*}
Litvak, V.~Milman and Schechtman have proved in \cite{Litvak-VMilman-Schechtman-1998} that
\begin{equation}\label{eq:Mq-M}M_q(K)\approx M(K)\end{equation}
for every $1\ls q\ls c_1k(K)$, where $c_1>0$ is an absolute constant and $k(K)=n(M(K)/b(K))^2$
is the Dvoretzky dimension of $K$. Moreover, Klartag and Vershynin have proved in \cite{Klartag-Vershynin-2007} that
\begin{equation}\label{eq:M-q-M}M_{-q}(K)\approx M(K)\end{equation}
for every $1\ls q\ls c_2d(K)$, where $d(K)\gr c_3k(K)$ is a parameter of $K$ defined by
\begin{equation*}d(K)=\min\Big\{ n,-\log\gamma_n\Big(\frac{m(K)}{2}K\Big)\Big\},\end{equation*}
and $m(K)\approx \sqrt{n}M(K)$ is the median of $\|\cdot\|_K$ with
respect to the standard Gaussian measure $\gamma_n$ on $\mathbb{R}^n$.

For any isotropic log-concave probability measure $\mu $ on $\mathbb{R}^n$ and any $q\neq 0$, $q>-n$, let
\begin{equation*}I_q(\mu ) := \Big( \int_{{\mathbb R}^n} \|x\|_2^q\,d\mu (x) \Big)^{1/q}.\end{equation*}
Paouris has proved in \cite{Paouris-1} and \cite{Paouris-2} that
\begin{equation}\label{eq:Iq}I_{-q}(\mu ) \approx I_q(\mu )\approx \sqrt{n}\end{equation}
for every $1\ls q\ls c_4q_{\ast }(\mu )$, where $q_{\ast}(\mu):=\max\{q:k(Z_q^{\circ }(\mu )\gr q\}$. It is known
that $q_{\ast }(\mu )\gr c_5\sqrt{n}$. Moreover, if $\mu $ is a $\psi_2$-measure with constant $\varrho $ then
$q_{\ast }(\mu)\gr c_6n/\varrho^2$.

\begin{theorem}\label{th:rot-lower-bound-q}Let $C$ be an isotropic centrally symmetric convex body in $\mathbb{R}^n$ and $K$ be a centrally symmetric convex body
in $\mathbb{R}^n$. Then, for every ${\bf t}=(t_1,\ldots,t_s)\in \mathbb{R}^s$ and $S\subseteq E_2^n$
with $|S|\ls e^{q({\bf t})}$, a random $U\in O(n)$ satisfies
\begin{equation*}\vol_{ns}\Big(\Big\{(x_j)_{j=1}^s:x_j\in U(C)\;\;\hbox{for all}\;j\;\hbox{and}\,
\Big\|\sum_{j=1}^s\epsilon_jt_jx_j\Big\|_K\ls cL_C\sqrt{n}M(K)\,\|{\bf t}\|_2
\;\;\hbox{for some}\;\epsilon\in S\Big\}\Big)\ls e^{-q({\bf t})}\end{equation*}
with probability greater than $1-e^{-2q({\bf t})}$, where
\begin{equation*}q({\bf t}):=\min\{q_{\ast }(\mu_{{\bf t}} ),d(K)\}.\end{equation*}
\end{theorem}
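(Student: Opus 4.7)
The plan is to combine the pushforward identity underlying \eqref{eq:isotropic-identity} with the negative moment machinery: Paouris' inequality $I_{-q}(\mu)\approx\sqrt{n}$ and the Klartag--Vershynin bound $M_{-q}(K)\approx M(K)$, which apply precisely in the range $q\ls q({\bf t})$ by the definition of $q({\bf t})$. A random rotation $U$ will simultaneously make $\int\|Uy\|_K^{-q}\,d\mu_{{\bf t}}(y)$ controllable, and a two-step Markov argument will then force small-ball estimates for $\mu_{{\bf t}}$ under the seminorm $\|U\cdot\|_K$. A union bound over $S$ finishes the argument.

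First I would rewrite the event via a change of variables. If $X_j$ are uniformly distributed on $U(C)$, then $Y_j=U^{-1}X_j$ are uniform on $C$ (Jacobian equal to one), and $\sum_j\epsilon_jt_jX_j=U\sum_j\epsilon_jt_jY_j$ has the same law as $L_C\|{\bf t}\|_2\,UZ$ with $Z\sim\mu_{{\bf t}}$, by central symmetry of $C$ applied coordinate-by-coordinate in the $Y_j$'s. Consequently, for every fixed $\epsilon\in E_2^s$ and every $\alpha>0$,
\begin{equation*}
\vol_{ns}\Big(\Big\{(x_j)\in U(C)^s:\Big\|\sum_{j=1}^s\epsilon_jt_jx_j\Big\|_K\ls\alpha\Big\}\Big)
=\mu_{{\bf t}}\big(\{y\in \mathbb{R}^n:\|Uy\|_K\ls\alpha/(L_C\|{\bf t}\|_2)\}\big).
\end{equation*}
Setting $\alpha=cL_C\sqrt{n}M(K)\|{\bf t}\|_2$ reduces the whole problem to controlling the $\|U\cdot\|_K$--small-ball probability of $\mu_{{\bf t}}$ at level $c\sqrt{n}M(K)$, uniformly in $\epsilon\in S$.

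Next, set $q=q({\bf t})$. Using Haar rotation invariance of $\nu$, for any $y\neq 0$ the random vector $Uy/\|y\|_2$ is $\sigma$-distributed on $S^{n-1}$, hence
\begin{equation*}
\int_{O(n)}\int_{\mathbb{R}^n}\|Uy\|_K^{-q}\,d\mu_{{\bf t}}(y)\,d\nu(U)
=M_{-q}(K)^{-q}\,I_{-q}(\mu_{{\bf t}})^{-q}\ls (c_1\sqrt{n}M(K))^{-q},
\end{equation*}
where the last inequality invokes $M_{-q}(K)\gr c_2 M(K)$ for $q\ls c_3d(K)$ and $I_{-q}(\mu_{{\bf t}})\gr c_4\sqrt{n}$ for $q\ls c_5 q_{\ast}(\mu_{{\bf t}})$, both valid since $q\ls q({\bf t})$. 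Markov's inequality applied in the variable $U$ then shows that with $\nu$-probability at least $1-e^{-2q}$,
\begin{equation*}
\int_{\mathbb{R}^n}\|Uy\|_K^{-q}\,d\mu_{{\bf t}}(y)\ls e^{2q}(c_1\sqrt{n}M(K))^{-q}.
\end{equation*}

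For such a good $U$, a second Markov step in $\mu_{{\bf t}}$ gives $\mu_{{\bf t}}(\{y:\|Uy\|_K\ls\alpha\})\ls(e^2\alpha/(c_1\sqrt{n}M(K)))^q$ for every $\alpha>0$. Choosing $c$ in the theorem small enough (say $c=c_1e^{-4}$) brings the right-hand side down to $e^{-2q}$ when $\alpha=c\sqrt{n}M(K)$; pulling this back through the identity of the first step and taking a union bound over the at most $e^{q}$ sign vectors in $S$ yields the claimed volume bound $|S|\cdot e^{-2q}\ls e^{-q({\bf t})}$. The only real obstacle is bookkeeping the exponential budgets: one has to split the Markov deficit so that the $U$--event costs $e^{-2q}$, the per-$\epsilon$ small-ball estimate leaves room $e^{-2q}$, and the union bound over $|S|\ls e^q$ still produces the required $e^{-q}$; this is a matter of choosing absolute constants rather than a genuine analytic difficulty.
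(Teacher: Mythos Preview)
Your proposal is correct and follows essentially the same route as the paper: reduce to $\mu_{{\bf t}}$ via the pushforward identity, average over $O(n)$ to obtain $M_{-q}(K)^{-q}I_{-q}(\mu_{{\bf t}})^{-q}$, invoke Klartag--Vershynin and Paouris in the range $q\ls q({\bf t})$, then apply Markov twice (once in $U$, once in the $s$-tuple) and a union bound over $S$. The only cosmetic difference is that the paper phrases things as integrals over $C^s$ with the rotated norm $\|\cdot\|_{U(K)}$, while you change variables to $U(C)^s$ with $\|\cdot\|_K$; these are identical up to the Haar-invariance of $U\mapsto U^{-1}$, and the sign-independence you obtain from the central symmetry of $C$ is exactly what the paper uses to pass from one $\epsilon$ to all $\epsilon\in E_2^s$.
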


\begin{proof}We may assume that $\|{\bf t}\|_2=1$. We start by writing
\begin{equation*}\int_C\cdots\int_C\Big\|\sum_{j=1}^st_jx_j\Big\|_K^{-q({\bf t})}dx_1\cdots dx_s=\int_{{\mathbb R}^n}\|x\|_K^{-q({\bf t})}d\nu_{{\bf t}}(x)
=L_C^{-q({\bf t})}\int_{{\mathbb R}^n}\|x\|_K^{-q({\bf t})}d\mu_{{\bf t}}(x).\end{equation*}
It follows that
\begin{align*}\int_{O(n)}\int_C\cdots\int_C\Big\|\sum_{j=1}^st_jx_j\Big\|_{U(K)}^{-q({\bf t})}dx_1\cdots dx_s \,d\nu (U)
&= L_C^{-q({\bf t})}\int_{{\mathbb R}^n}\int_{O(n)}\|x\|_{U(K)}^{-q({\bf t})}d\nu (U)\,d\mu_{{\bf t}} (x)\\
&=L_C^{-q({\bf t})}M_{-q({\bf t})}^{-q({\bf t})}(K)\int_{{\mathbb R}^n}\|x\|_2^{-q({\bf t})}d\mu_{{\bf t}} (x)\\
&= L_C^{-q({\bf t})}I_{-q({\bf t})}^{-q({\bf t})}(\mu_{{\bf t}} )M_{-q({\bf t})}^{-q({\bf t})}(K).\end{align*}
From Markov's inequality, a random $U\in O(n)$ satisfies
\begin{equation*}\int_C\cdots\int_C\Big\|\sum_{j=1}^st_jx_j\Big\|_{U(K)}^{-q({\bf t})}
dx_1\cdots dx_s \ls e^{2q({\bf t})}L_C^{-q({\bf t})}I_{-q({\bf t})}^{-q({\bf t})}(\mu_{{\bf t}} )M_{-q({\bf t})}^{-q({\bf t})}(K)\end{equation*}
with probability greater than $1-e^{-2q({\bf t})}$. Since
\begin{equation*}\int_C\cdots\int_C\Big\|\sum_{j=1}^st_jx_j\Big\|_{U(K)}^{-q({\bf t})}dx_1\cdots dx_s
=\int_C\cdots\int_C\Big\|\sum_{j=1}^s\epsilon_jt_jx_j\Big\|_{U(K)}^{-q({\bf t})}dx_1\cdots dx_s\end{equation*}
for every $\epsilon\in E_2^s$, we conclude that a random $U\in O(n)$ satisfies
\begin{equation*}\int_C\cdots\int_C\Big\|\sum_{j=1}^s\epsilon_jt_jx_j\Big\|_{U(K)}^{-q({\bf t})}dx_1\cdots dx_s
\ls e^{2q({\bf t})}L_C^{-q({\bf t})}I_{-q({\bf t})}^{-q({\bf t})}(\mu_{{\bf t}} )M_{-q({\bf t})}^{-q({\bf t})}(K)\end{equation*}
for all $\epsilon\in E_2^s$, with probability greater than $1-e^{-2q({\bf t})}$.

Next, fix any such $U$ and let $S\subseteq E_2^n$ with $|S|\ls e^{q({\bf t})}$. Using \eqref{eq:M-q-M}, \eqref{eq:Iq}
and Markov's inequality, we see that a random
$s$-tuple $(x_1,\ldots ,x_s)\in C^s$ satisfies
\begin{equation*}\Big\|\sum_{j=1}^s\epsilon_jt_jx_j\Big\|_{U(K)}\gr e^{-3}L_CI_{-q({\bf t})}(\mu_{\bf t} )M_{-q({\bf t})}(K)\gr c_1L_C\sqrt{n}M(K)\end{equation*}
for all $\epsilon\in S$, with probability greater than $1-e^{-q({\bf t})}$.\end{proof}

\medskip

Recall that if $C$ is a $\psi_2$-body with constant $\varrho $ then $\mu_{{\bf t}}$ is a $\psi_2$ isotropic log-concave probability
measure with constant $\varrho $. In this case $q_{\ast }(\mu_{{\bf t}})\gr cn/\varrho^2$, and hence, in Theorem~\ref{th:rot-lower-bound-q}
we have $q({\bf t})\gr c\min\{n/\varrho^2,d(K)\}$. Moreover, if $C=\overline{B_2^n}$ we have that $U(C)=\overline{B_2^n}$ for all $U\in O(n)$
and $\varrho\approx 1$. Therefore, we have the following corollary.

\begin{corollary}\label{cor:rot-lower-bound-q}Let $C$ be an isotropic centrally symmetric convex body in $\mathbb{R}^n$ which is $\psi_2$ with
constant $\varrho $, and $K$ be a centrally symmetric convex body in $\mathbb{R}^n$. Then, for every ${\bf t}=(t_1,\ldots,t_s)\in \mathbb{R}^s$
and $S\subseteq E_2^s$ with $|S|\ls e^{c\min\{n/\varrho^2,d(K)\}}$, a random $U\in O(n)$ satisfies
\begin{align*} &\vol_{ns}\Big(\Big\{(x_j)_{j=1}^s:x_j\in U(C)\;\;\hbox{for all}\;j\;\hbox{and}\,
\Big\|\sum_{j=1}^s\epsilon_jt_jx_j\Big\|_K\ls c_1L_C\sqrt{n}M(K)\,\|{\bf t}\|_2
\;\;\hbox{for some}\;\epsilon\in S\Big\}\Big)\\
&\hspace*{1.5cm}\ls e^{-c_2\min\{n/\varrho^2,d(K)\}}\end{align*}
with probability greater than $1-e^{-c_2\min\{n/\varrho^2,d(K)\}}$. In particular, for any centrally symmetric convex body $K$ in ${\mathbb R}^n$
and any $S\subseteq E_2^s$ with $|S|\ls e^{cd(K)}$ we have
\begin{equation*}\vol_{ns}\Big(\Big\{(x_j)_{j=1}^s:x_j\in \overline{B_2^n}\;\;\hbox{for all}\;j\;\hbox{and}\,
\,\Big\|\sum_{j=1}^s\epsilon_jt_jx_j\Big\|_K\ls c_1L_C\sqrt{n}M(K)\,\|{\bf t}\|_2\;\;\hbox{for some}\;\epsilon\in S\Big\}\Big )
\ls e^{-c_2d(K)}.\end{equation*}
\end{corollary}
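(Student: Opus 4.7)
The plan is to deduce Corollary \ref{cor:rot-lower-bound-q} as essentially a direct specialization of Theorem \ref{th:rot-lower-bound-q}. Recall that in Theorem \ref{th:rot-lower-bound-q} the probability bound is governed by the quantity $q({\bf t})=\min\{q_{\ast}(\mu_{{\bf t}}),d(K)\}$, so the task reduces to producing a lower bound for $q_{\ast}(\mu_{{\bf t}})$ in the $\psi_2$ setting.

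First, I would check that $\mu_{{\bf t}}$ inherits the $\psi_2$ behaviour of $C$. This is exactly the calculation performed in the proof of Theorem \ref{th:psi2-case}: assuming $\|{\bf t}\|_2=1$, for every $\xi\in S^{n-1}$ one writes
\begin{equation*}
\|\langle\cdot ,\xi\rangle\|_{L_{\psi_2}(\mu_{{\bf t}})}^2\ls\sum_{j=1}^sL_C^{-2}t_j^2\|\langle X_j,\xi\rangle\|_{L_{\psi_2}(C)}^2\ls\varrho^2,
\end{equation*}
so that $\mu_{{\bf t}}$ is an isotropic log-concave $\psi_2$-measure with constant $\varrho$. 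Then I invoke Paouris' estimate recalled in Section 7: for a $\psi_2$-measure $\mu$ with constant $\varrho$ one has $q_{\ast}(\mu)\gr c_6 n/\varrho^2$. Combining, $q({\bf t})\gr c\min\{n/\varrho^2,d(K)\}$, and plugging this lower bound directly into the conclusion of Theorem \ref{th:rot-lower-bound-q} yields the first statement of the corollary (with possibly adjusted absolute constants).

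For the specialization to $C=\overline{B_2^n}$, the key observation is rotation invariance: $U(\overline{B_2^n})=\overline{B_2^n}$ for every $U\in O(n)$, so the assertion ``a random $U$ satisfies\ldots'' holds for every $U$ simultaneously and the outer probabilistic quantifier becomes vacuous. Moreover, it is classical (and used in Remark \ref{rem:ell-p}, referencing \cite{Barthe-Guedon-Mendelson-Naor-2005}) that $\overline{B_2^n}$ is a $\psi_2$-body with constant $\varrho\approx 1$, so $n/\varrho^2\gtrsim n\gr d(K)$ and the minimum in the exponent simplifies to $d(K)$. This gives the deterministic volume estimate displayed in the ``in particular'' clause.

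There is no genuine obstacle to speak of; the work is entirely in identifying the right specialization of the parameters. The only point requiring a bit of care is verifying that the $\psi_2$ constant of $\mu_{{\bf t}}$ does not pick up an $\|{\bf t}\|_2$-dependent factor, which is precisely why one normalizes $\|{\bf t}\|_2=1$ before applying the sub-Gaussian triangle inequality in $L_{\psi_2}$; after this, the rest is bookkeeping of absolute constants between the two statements.
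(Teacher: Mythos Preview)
Your proposal is correct and follows essentially the same approach as the paper: the paper's argument, given in the paragraph immediately preceding the corollary, likewise observes that $\mu_{{\bf t}}$ is $\psi_2$ with constant $\varrho$ (so $q_{\ast}(\mu_{{\bf t}})\gr cn/\varrho^2$), feeds this into Theorem~\ref{th:rot-lower-bound-q}, and then notes that $\overline{B_2^n}$ is rotation-invariant with $\varrho\approx 1$.
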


\begin{remark}\rm Choosing $t_1=\cdots =t_s=1$, one may view the previous results as lower bounds for a ``randomized" version of the
parameter $\beta_s(C,K)$. A general lower bound for $\beta_n(C,K)$ was proved by Banaszczyk; in \cite{Banaszczyk-1993} he showed that if $C$ and $K$ are
centrally symmetric convex bodies in ${\mathbb R}^n$ then
\begin{equation}\label{eq:ban-lower}\beta_n(C,K)\gr c\sqrt{n}(\vol_n(C)/\vol_n(K))^{1/n}\end{equation}
for an absolute constant $c>0$. An alternative proof of this lower bound can be deduced from a more general result of
Gluskin and V.~Milman in \cite{Gluskin-VMilman-2004}: If $\vol_n(K)=\vol_n(C)$ then, for any $0<u<1$ one has
\begin{equation*}
\vol_{n^2}\Big(\Big\{ (x_j)_{j=1}^n: x_j\in C\;\;\hbox{for all}\;j\;\hbox{and}\; \Big\|\sum_{j=1}^nt_jx_j\Big\|_K
\ls u\|{\bf t}\|_2\Big\}\Big) \ls u^ne^{\frac{(1-u^2)n}{2}},\end{equation*}
which implies that, for each ${\bf t}\in {\mathbb R}^n$, with probability greater than $1-e^{-n}$ with respect to $(x_1,\ldots ,x_n)$ we have
\begin{equation*}\min_{\epsilon\in E_2^n}\Big\| \sum_{j=1}^n \epsilon_jt_jx_j\Big\|_K \gr \frac{1}{10}\|{\bf t}\|_2.\end{equation*}
Banaszczyk's theorem corresponds to the case $s=n$ and ${\bf t}=(1,1,\ldots ,1)$.
Starting from this observation, the first and third authors of this article proved in \cite{Chasapis-Skarmogiannis}
several results in the spirit of Theorem~\ref{th:rot-lower-bound-q} and Corollary~\ref{cor:rot-lower-bound-q}. For example,
they showed that if $K$ is a centrally symmetric convex body in $\mathbb{R}^n$ and $S\subseteq E_2^n$ then
\begin{equation*}
\vol_{n^2}\Big(\Big\{ (x_j)_{j=1}^n\subseteq B_2^n : \Big\|\sum_{j=1}^n \epsilon_jx_j\Big\|_K \ls c\delta\sqrt{n}M(K) \hbox{, for some }
\epsilon\in S\Big\}\Big)\ls |S|\cdot\gamma_n(\delta\sqrt{n}M(K)\,K)+e^{-n}.\end{equation*}
A concrete application of this fact is that, for every $1\ls p\ls \log n$ and any $S\subseteq E_2^n$ with $|S|\ls 2^{c_pn}$,
a random $n$-tuple of points in $B_2^n$ satisfies, with probability greater than $1-e^{-n}$,
\begin{equation*}\Big\|\sum_{j=1}^n\epsilon_jx_j\Big\|_p\gr c\sqrt{p}\sqrt{n}\big(\vol_n(B_2^n)/\vol_n(B_p^n)\big )^{1/n}\end{equation*}
for all $\epsilon =(\epsilon_1,\ldots ,\epsilon_n)\in S$, while in the case $p>\log n$ one can deduce that
for any $0<\delta <1$ and $S\subseteq E_2^n$ with $|S|\ls 2^{n^{1-\delta}}$, a random $n$-tuple of points in $B_2^n$ satisfies, with
probability greater than $1-e^{-n}$,
\begin{equation*}\Big\|\sum_{j=1}^n\epsilon_jx_j\Big\|_p\gr c(\delta )\sqrt{\log n}\sqrt{n}\big(\vol_n(B_2^n)/\vol_n(B_p^n)\big )^{1/n}\end{equation*}
for all $\epsilon =(\epsilon_1,\ldots ,\epsilon_n)\in S$. We can obtain (in fact, more direct) variants and generalizations of these
bounds from Corollary~\ref{cor:rot-lower-bound-q} and the available information on $d(B_p^n)$.
\end{remark}

Following the proof of Theorem~\ref{th:rot-lower-bound-q} we can also obtain upper bounds for the $\|\cdot\|_K$-norm of signed sums of random
points from an isotropic body $C$.

\begin{theorem}\label{th:rot-upper-bound-p}Let $C$ be an isotropic centrally symmetric convex body in $\mathbb{R}^n$ and $K$ be a centrally symmetric convex body
in $\mathbb{R}^n$. Then, for every ${\bf t}=(t_1,\ldots,t_s)\in \mathbb{R}^s$ and $S\subseteq E_2^n$
with $|S|\ls e^{p({\bf t})}$, a random $U\in O(n)$ satisfies
\begin{equation*}\vol_{ns}\Big(\Big\{(x_j)_{j=1}^s:x_j\in U(C)\;\;\hbox{for all}\;j\;\hbox{and}\;
\Big\|\sum_{j=1}^s\epsilon_jt_jx_j\Big\|_K\gr cL_C\sqrt{n}M(K)\,\|{\bf t}\|_2
\;\;\hbox{for some}\;\epsilon\in S\Big\}\Big )\ls e^{-p({\bf t})}\end{equation*}
with probability greater than $1-e^{-2p({\bf t})}$, where
\begin{equation*}p({\bf t}):=\min\{q_{\ast }(\mu_{{\bf t}} ),k(K)\}.\end{equation*}
\end{theorem}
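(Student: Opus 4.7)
The plan is to mirror the proof of Theorem~\ref{th:rot-lower-bound-q}, replacing the negative moments of $\|\cdot\|_{U(K)}$ by the positive moments of order $p({\bf t})$. By homogeneity we may assume $\|{\bf t}\|_2=1$. Starting from Lemma~\ref{lem:identity} and the change of variables used to pass from $\nu_{{\bf t}}$ to the isotropic log-concave measure $\mu_{{\bf t}}$ (see the identity \eqref{eq:isotropic-identity}), we may write
\begin{equation*}
\int_C\!\cdots\!\int_C \Big\|\sum_{j=1}^s t_j x_j\Big\|_{U(K)}^{p({\bf t})}\,dx_1\cdots dx_s = L_C^{p({\bf t})}\int_{{\mathbb R}^n}\|x\|_{U(K)}^{p({\bf t})}\,d\mu_{{\bf t}}(x),
\end{equation*}
and then Fubini converts its $O(n)$-average into a product of radial and spherical moments:
\begin{equation*}
\int_{O(n)}\int_{{\mathbb R}^n}\|x\|_{U(K)}^{p({\bf t})}\,d\mu_{{\bf t}}(x)\,d\nu(U) = M_{p({\bf t})}^{p({\bf t})}(K)\cdot I_{p({\bf t})}^{p({\bf t})}(\mu_{{\bf t}}).
\end{equation*}

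Now I invoke the two comparison results quoted in the opening of Section~7. Since $p({\bf t})\ls k(K)$, the Litvak--V.~Milman--Schechtman inequality \eqref{eq:Mq-M} gives $M_{p({\bf t})}(K)\approx M(K)$; and since $p({\bf t})\ls q_{\ast}(\mu_{{\bf t}})$, Paouris' inequality \eqref{eq:Iq} gives $I_{p({\bf t})}(\mu_{{\bf t}})\approx \sqrt{n}$. Combining these, the average is bounded by $(c_0 L_C\sqrt{n}M(K))^{p({\bf t})}$ for an absolute $c_0>0$. Markov's inequality on $(O(n),\nu)$ then yields a set of $U\in O(n)$ of probability at least $1-e^{-2p({\bf t})}$ on which
\begin{equation*}
\int_C\!\cdots\!\int_C\Big\|\sum_{j=1}^s t_j x_j\Big\|_{U(K)}^{p({\bf t})}\,dx_1\cdots dx_s \ls e^{2p({\bf t})}\,(c_0 L_C\sqrt{n}M(K))^{p({\bf t})}.
\end{equation*}
By the central symmetry of $C$, the same estimate is valid when $t_j$ is replaced by $\epsilon_j t_j$ for any $\epsilon\in E_2^s$.

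Fix such a $U$. A second application of Markov's inequality in the variables $(x_1,\ldots,x_s)\in C^s$ shows that for each individual $\epsilon\in E_2^s$ the measure of the set where $\|\sum \epsilon_j t_j x_j\|_{U(K)}$ exceeds $c L_C\sqrt{n}M(K)$ (for a suitable absolute constant $c$) is at most $e^{-2p({\bf t})}$. A union bound over $S\subseteq E_2^s$ with $|S|\ls e^{p({\bf t})}$ then gives total probability at most $e^{-p({\bf t})}$ for the event in the statement. The only obstacle is the bookkeeping of absolute constants: $c$ must be large enough to absorb the $e^{2p({\bf t})}$ loss from averaging over $O(n)$, the $e^{2p({\bf t})}$ loss from Markov on $C^s$, and the $e^{p({\bf t})}$ loss from the union bound over $S$ — which is precisely the margin created by working at moment order $p({\bf t})$.
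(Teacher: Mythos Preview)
Your proof is correct and follows essentially the same route as the paper's: the paper too reduces to the identity $\int_{C^s}\|\sum t_jx_j\|_{U(K)}^{p({\bf t})}=L_C^{p({\bf t})}I_{p({\bf t})}^{p({\bf t})}(\mu_{{\bf t}})M_{p({\bf t})}^{p({\bf t})}(K)$ after averaging over $O(n)$, and then defers to the proof of Theorem~\ref{th:rot-lower-bound-q} for the two Markov steps, the symmetry in $\epsilon$, and the use of \eqref{eq:Mq-M} and \eqref{eq:Iq}. The only cosmetic difference is that you invoke the moment comparisons $M_{p({\bf t})}(K)\approx M(K)$ and $I_{p({\bf t})}(\mu_{{\bf t}})\approx\sqrt{n}$ before the first Markov step rather than after, which is immaterial.
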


\begin{proof}We may assume that $\|{\bf t}\|_2=1$. We start by writing
\begin{equation*}\int_C\cdots\int_C\Big\|\sum_{j=1}^st_jx_j\Big\|_K^{p({\bf t})}dx_1\cdots dx_s=\int_{{\mathbb R}^n}\|x\|_K^{p({\bf t})}d\nu_{{\bf t}}(x)
=L_C^{p({\bf t})}\int_{{\mathbb R}^n}\|x\|_K^{p({\bf t})}d\mu_{{\bf t}}(x).\end{equation*}
It follows that
\begin{align*}\int_{O(n)}\int_C\cdots\int_C\Big\|\sum_{j=1}^st_jx_j\Big\|_{U(K)}^{p({\bf t})}dx_1\cdots dx_s\,d\nu (U)
&= L_C^{p({\bf t})}\int_{{\mathbb R}^n}\int_{O(n)}\|x\|_{U(K)}^{p({\bf t})}d\nu (U)\,d\mu_{{\bf t}} (x)\\
&=L_C^{p({\bf t})}M_{p({\bf t})}^{p({\bf t})}(K)\int_{{\mathbb R}^n}\|x\|_2^{p({\bf t})}d\mu_{{\bf t}} (x)\\
&= L_C^{p({\bf t})}I_{p({\bf t})}^{p({\bf t})}(\mu_{{\bf t}} )M_{p({\bf t})}^{p({\bf t})}(K).\end{align*}
Then, we proceed as in the proof of Theorem~\ref{th:rot-lower-bound-q} using Markov's inequality,
and then \eqref{eq:Mq-M} and \eqref{eq:Iq}.\end{proof}

\smallskip

We can also obtain an analogue of Corollary~\ref{cor:rot-lower-bound-q} under the assumption that $C$ is a $\psi_2$-body with constant
$\varrho $. In particular, we have:

\begin{corollary}\label{cor:rot-upper-bound-p}Let $K$ be a centrally symmetric convex body in $\mathbb{R}^n$.
Then, for every ${\bf t}=(t_1,\ldots,t_s)\in \mathbb{R}^s$ and any $S\subseteq E_2^s$ with $|S|\ls e^{ck(K)}$ we have
\begin{align*}&\vol_{ns}\Big(\Big\{(x_j)_{j=1}^s:x_j\in \overline{B_2^n}\;\;\hbox{for all}\;j\;\hbox{and}\;
\Big\|\sum_{j=1}^s\epsilon_jt_jx_j\Big\|_K\gr cL_C\,\sqrt{n}M(K)\,\|{\bf t}\|_2\;\;\hbox{for some}\;\epsilon\in S\Big\}\Big)\\
&\hspace*{1.5cm}\ls e^{-ck(K)}.\end{align*}
\end{corollary}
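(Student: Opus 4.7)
The plan is to derive this statement as a direct specialization of Theorem~\ref{th:rot-upper-bound-p} in the case $C=\overline{B_2^n}$, taking advantage of two rotation-invariance features that collapse the randomized conclusion into a deterministic one. First, since the Euclidean ball is invariant under the orthogonal group, $U(\overline{B_2^n})=\overline{B_2^n}$ for every $U\in O(n)$. So the integral in the theorem is independent of $U$, and the ``with probability greater than $1-e^{-2p({\bf t})}$'' clause becomes vacuous: whatever holds for a random $U$ holds for every $U$, and in particular for the body $\overline{B_2^n}$ itself.

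Next I would check that the exponent $p({\bf t})=\min\{q_{\ast}(\mu_{{\bf t}}),k(K)\}$ is at least of the order of $k(K)$. The body $\overline{B_2^n}$ is an isotropic $\psi_2$-body with an absolute constant $\varrho\approx 1$ (as used in Remark~\ref{rem:ell-p}), and its isotropic constant $L_C$ is bounded by an absolute constant. Arguing as in the proof of Theorem~\ref{th:psi2-case}, for any $\xi\in S^{n-1}$ and $\|{\bf t}\|_2=1$,
\begin{equation*}
\|\langle\cdot,\xi\rangle\|_{L_{\psi_2}(\mu_{{\bf t}})}^2\ls \sum_{j=1}^sL_C^{-2}t_j^2\|\langle X_j,\xi\rangle\|_{L_{\psi_2}(C)}^2\ls c\varrho^2,
\end{equation*}
so $\mu_{{\bf t}}$ is itself an isotropic $\psi_2$ log-concave probability measure with an absolute constant. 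By the $\psi_2$ version of Paouris' large deviation result invoked just before the corollary, $q_{\ast}(\mu_{{\bf t}})\gr c_1n/\varrho^2\gr c_2 n$. Since $k(K)\ls n$ by definition, this forces $p({\bf t})\gr c_3 k(K)$ for an absolute constant $c_3>0$.

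Having verified both ingredients, I would then apply Theorem~\ref{th:rot-upper-bound-p} with, say, $U=I_n$ and threshold $|S|\ls e^{c_3k(K)}\ls e^{p({\bf t})}$. The rotation-invariance of $\overline{B_2^n}$ means the exceptional set of orthogonal transformations is empty, so the conclusion holds unconditionally for $\overline{B_2^n}$, producing the stated upper bound with the $e^{-ck(K)}$ measure estimate (the factor $L_C$ can either be absorbed in the absolute constant or kept for bookkeeping consistency with the other statements in this section).

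There is no substantive obstacle here; the real work was already done in Theorem~\ref{th:rot-upper-bound-p} via the Markov-and-negative-moments-plus-Klartag--Vershynin argument, together with Paouris' inequality. The only two things to verify carefully are (i) that the $\psi_2$ property of the body is actually transferred to $\mu_{{\bf t}}$ with no loss in the constant (handled exactly as in Theorem~\ref{th:psi2-case}), and (ii) that $k(K)\le n$ ensures that $k(K)$ is the binding term in the minimum defining $p({\bf t})$, which is immediate.
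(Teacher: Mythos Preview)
Your proposal is correct and follows essentially the same route as the paper, which derives the corollary from Theorem~\ref{th:rot-upper-bound-p} exactly as Corollary~\ref{cor:rot-lower-bound-q} was derived from Theorem~\ref{th:rot-lower-bound-q}: use that $\overline{B_2^n}$ is $O(n)$-invariant and $\psi_2$ with an absolute constant, so $q_{\ast}(\mu_{{\bf t}})\gr cn\gr ck(K)$ and the random-$U$ clause becomes vacuous. One small slip: in your closing remark, Theorem~\ref{th:rot-upper-bound-p} relies on \emph{positive} moments and the Litvak--Milman--Schechtman estimate \eqref{eq:Mq-M}, not on negative moments and Klartag--Vershynin; this does not affect your argument.
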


Finally, we briefly describe the proof of Theorem~\ref{th:r-barany-grinberg}. Recall that for any centrally symmetric convex
body $K$ in ${\mathbb R}^n$ and any $\delta\in (0,1)$ the parameter $\beta_{\delta ,s}^{(R)}(K,K)$
is defined by
\begin{equation*}\beta_{\delta ,s}^{(R)}(K,K):=\min\Big\{ r>0:{\rm vol}_{ns}\Big(\Big\{(x_j)_{j=1}^s:x_j\in K
\;\hbox{for all}\;j\;\hbox{and}\;\min_{\epsilon\in E_2^s}
\Big\|\sum_{j=1}^s\epsilon_jx_j\Big\|_K\ls r\Big\}\Big)\gr 1-\delta \Big\}.\end{equation*}

\begin{proof}[Proof of Theorem~\ref{th:r-barany-grinberg}]
Our starting point is Lemma~\ref{lem:higher-moments}; applied for the vector ${\bf 1}=(1,\ldots ,1)\in {\mathbb R}^n$, it shows that for
any centrally symmetric convex body $K$ in $\mathbb{R}^n$,
\begin{equation}\label{eq:appl-1}\Big ({\mathbb E}_{K^n}\Big\|\sum_{j=1}^nx_j\Big\|_K^q\Big )^{1/q}\ls cq\,\|{\bf 1}\|_{K^n,K},\end{equation}
where $c>0$ is an absolute constant. On the other hand, by the symmetry of $K$ we have that, for any $q\gr 1$,
\begin{equation}\label{eq:appl-2}{\mathbb E}_{K^n}\Big\|\sum_{j=1}^nx_j\Big\|_K^q\,dx_1\cdots dx_n
={\mathbb E}_{K^n}\Big ({\mathbb E}_{\epsilon }\Big\|\sum_{j=1}^n\epsilon_jx_j\Big\|_K^q\Big ).\end{equation}
Combining the above we have, in particular,
\begin{equation}\label{eq:appl-3}\Big ({\mathbb E}_{K^n}\,\min_{\epsilon\in E_2^n}\Big\|\sum_{j=1}^n\epsilon_jx_j\Big\|_K^q\Big )^{1/q}
\ls c_1q\,\|{\bf 1}\|_{K^n,K}.\end{equation}
It follows that a random $n$-tuple $(x_1,\ldots ,x_n)\in K^n$ satisfies
\begin{equation*}\min_{\epsilon\in E_2^n}\Big\|\sum_{j=1}^n\epsilon_jx_j\Big\|_K\ls c_2q\,\|{\bf 1}\|_{K^n,K}\end{equation*}
with probability greater than $1-e^{-q}$. Choosing $q=\log (2/\delta )$ we see that
\begin{equation}\label{eq:delta}\beta_{\delta ,n}^{(R)}(K,K)\ls c_2\log (2/\delta )\,\|{\bf 1}\|_{K^n,K}.\end{equation}
Inserting our upper bounds for $\|{\bf 1}\|_{K^n,K}$ into \eqref{eq:delta} we
conclude the proof. \end{proof}

\bigskip

\medskip

\noindent {\bf Acknowledgements.} The contribution of the second named author to this work was made during a visit at the
University of Missouri, Columbia; he would like to thank the Department of Mathematics for the warm hospitality. The third
named author is supported by a PhD Scholarship from the Hellenic Foundation for Research and Innovation (ELIDEK); research number 70/3/14547.

\bigskip

\footnotesize
\bibliographystyle{amsplain}

\bigskip

\bigskip

\bigskip

\noindent{\bf Keywords:} convex bodies, log-concave probability measures, weighted sums of random vectors, isotropic position.
\\
\thanks{\noindent {\bf 2010 MSC:} Primary 52A23; Secondary 46B06, 52A40, 60D05.}

\bigskip

\bigskip

\noindent \textsc{Giorgos \ Chasapis}: Department of
Mathematical Sciences, Kent State University, Kent, OH 44242, USA.

\smallskip

\noindent \textit{E-mail:} \texttt{gchasap1@kent.edu}

\bigskip

\noindent \textsc{Apostolos \ Giannopoulos}: Department of
Mathematics, National and Kapodistrian University of Athens, Panepistimioupolis 157-84,
Athens, Greece.

\smallskip

\noindent \textit{E-mail:} \texttt{apgiannop@math.uoa.gr}

\bigskip

\noindent \textsc{Nikos \ Skarmogiannis}: Department of
Mathematics, National and Kapodistrian University of Athens, Panepistimioupolis 157-84,
Athens, Greece.

\smallskip

\noindent \textit{E-mail:} \texttt{nikskar@math.uoa.gr}

\bigskip

\end{document}